\def \Vol {\rm Vol}
\def \susbeteq {\subseteq}
\def \id {{\rm Id}}
\def \graph {{\rm Graph}}
\def \epigraph {{\rm Epigraph}}
\def \supp {{\rm Supp}}
\def \conv {{\rm Conv}}
\def \dom {{\rm Dom}}
\def \bary {{\rm bar}}
\def \RR {\mathbb R}
\def \EE {\mathbb E}
\def \eps {\varepsilon}
\def \vphi {\varphi}
\def \cH {\mathcal H}
\def \cM {\mathcal M}
\def \cI {\mathcal I}
\newtheorem{theorem}{Theorem}[section]
\newtheorem{lemma}[theorem]{Lemma}
\newtheorem{proposition}[theorem]{Proposition}
\newtheorem{corollary}[theorem]{Corollary}
\newtheorem{remark}[theorem]{Remark}
\newtheorem{definition}[theorem]{Definition}
\def\myffrac#1#2 in #3{\raise 2.6pt\hbox{$#3 #1$}\mkern-1.5mu\raise 0.8pt\hbox{$
#3/$}\mkern-1.1mu\lower 1.5pt\hbox{$#3 #2$}}
\def\qed{\hfill $\vcenter{\hrule height .3mm
\hbox {\vrule width .3mm height 2.1mm \kern 2mm \vrule width .3mm
height 2.1mm} \hrule height .3mm}$ \bigskip}
\begin{document}

\title{Affine hemispheres of elliptic type}
\date{}
\author{Bo'az Klartag\thanks{School of Mathematical Sciences, Tel Aviv University, Tel Aviv 69978, Israel. E-mail: klartagb@tau.ac.il  Supported by a grant from the European Research Council.}}

\maketitle

\begin{abstract}
We find that for any
 $n$-dimensional,
compact, convex set $K \subseteq \RR^{n+1}$ there is an affinely-spherical hypersurface $M \subseteq \RR^{n+1}$ with center at the relative interior of
$K$,
such that the disjoint union $M \cup K$ is the boundary of an $(n+1)$-dimensional, compact, convex set. This
so-called affine hemisphere $M$
is uniquely determined by $K$ up to affine transformations, it is of elliptic type, is associated with $K$ in an affinely-invariant manner,
and it is centered at the Santal\'o point of $K$.
\end{abstract}

\section{Introduction}
\setcounter{equation}{0}
\label{sec_intro}

Let $M \subseteq \RR^{n+1}$ be a smooth, connected hypersurface which
is locally strongly-convex, i.e.,
the second fundamental form is a definite symmetric bilinear form at any point $y \in M$.
There are several ways to define the affine normal line $\ell_M(y)$ at a point $y \in M$. One possibility is
to define $\ell_M(y)$ via the following procedure:

\begin{enumerate}
\item[(i)] Let $H = T_y M$ be the tangent space to $M$ at the point $y \in M$, viewed as a linear subspace of codimension one in $\RR^{n+1}$.
Select a vector $v \not \in H$ pointing to the convex side of $M$ at the point $y \in M$, and denote $ M_t = M \cap (H + t v)$ for $t > 0$. Here, $H + t v = \{ x + t v \, ; \, x \in H \}$.
\item[(ii)] For a sufficiently small $t >0 $, the section $M_t$ encloses an $n$-dimensional convex body $\Omega_t \subseteq H + t v$.
The barycenters $b_t = \bary(\Omega_t)$ depend smoothly on $t$. The affine normal line $\ell_M(y) \subseteq \RR^{n+1}$ is defined to be the line passing through $y$ in the direction of
the non-zero vector $\left. \frac{d}{dt} b_t \right|_{t = 0}$.
\end{enumerate}

We say that $M$   is {\it affinely-spherical} with center at a point $p \in \RR^{n+1}$ if all of the affine normal lines of $M$ meet at $p$.
In the case where all of the affine normal lines are parallel, we say that $M$ is affinely-spherical with center at infinity.
An {\it affine sphere} is an affinely-spherical hypersurface which is {\it complete}, i.e., it is a closed subset of $\RR^{n+1}$.
This definition is clearly affinely-invariant, hence the term ``affine sphere''.
In Section \ref{sec_5} below we explain that $M$ is affinely-spherical with center at the origin if and only if the cone measure on $M$ is mapped to a measure proportional
to the cone measure
on the polar hypersurface $M^*$ via the polarity map.

\medskip Affine spheres were introduced by the Romanian geometer Tzitz\'eica \cite{Tz1, Tz2}.
All convex quadratic hypersurfaces in $\RR^{n+1}$ are affine spheres, as well as the hypersurface
$$ M = \left \{ (x_1,\ldots,x_n) \in \RR^n \, ; \, \forall i, x_i > 0, \, \prod_{i=1}^n x_i = 1 \right \}, $$
found by Tzitz\'eica \cite{Tz1, Tz2} and Calabi \cite{Cal2}. See Loftin \cite{loft} for a survey on affine spheres.
At any point $y \in M$, the punctured line $\ell_M(y) \setminus \{ y \} $ is naturally divided into two rays: one pointing to
the convex side of $M$ and the other to the concave side. These two rays are referred to as the convex side and the concave side of $\ell_M(y)$, respectively.
An affinely-spherical hypersurface $M$ is called {\it elliptic} if
its center lies on the convex side of all of the affine normal lines.
It is {\it hyperbolic} if its center lies on the concave
side of all of the affine normal lines. There are also parabolic affine spheres, whose affine normal lines are all parallel.

\medskip Ellipsoids in $\RR^{n+1}$ are elliptic affine spheres, while elliptic paraboloids are parabolic affine spheres.
There are no other examples of complete affine spheres of elliptic or parabolic type.  This non-trivial theorem
is the culmination of the works of Blaschke \cite{Bla}, Calabi \cite{Cal1}, Pogorelov \cite{Pog} and Trudinger and Wang \cite{Tru}.

\medskip While affine spheres of elliptic or parabolic type are quite rare, there are
many hyperbolic affine spheres in $\RR^{n+1}$. From the works of Calabi \cite{Cal2}  and Cheng-Yau \cite{CY} we learn that for any non-empty, open, convex cone $C \subseteq \RR^{n+1}$
that does not contain a full line, there exists a hyperbolic affine sphere which is asymptotic to the cone.
This hyperbolic affine sphere is determined by the cone $C$ up to homothety, and all
hyperbolic affine spheres in $\RR^{n+1}$ arise this way. Why are there so few elliptic affine spheres,
compared to the abundance of hyperbolic affine spheres? Perhaps completeness is  too strong a requirement in the elliptic case. We propose the following:

\begin{definition} Let $M  \subseteq \RR^{n+1}$ be a smooth, connected, locally strongly-convex hypersurface.
We say that $M$ is an ``affine hemisphere'' if
\begin{enumerate}
\item There exist compact, convex sets $K, \tilde{K} \subseteq \RR^{n+1}$, with $\dim(K) = n$ and $\dim(\tilde{K}) = n+1$, such that
$M$ does not intersect the affine hyperplane spanned by $K$ and
$$ K \cup M = \partial \tilde{K}. $$
\item The hypersurface $M$ is affinely-spherical with center at the relative interior of $K$.
\end{enumerate}
We say that $K$ is the ``anchor'' of the affine hemisphere $M$.
\label{def_1251}
\end{definition}

In Definition \ref{def_1251},
the dimension $\dim(K)$ is the maximal number $N$
such that $K$ contains $N+1$ affinely-independent vectors.
Note that when $M \susbeteq \RR^{n+1}$ is an affine hemisphere, its anchor $K$ is the compact, convex
set enclosed by $\overline{M} \setminus M$, where $\overline{M}$ is the closure of $M$.
In particular, $ K = \conv(\overline{M} \setminus M) $
where $\conv$ denotes convex hull.
It is clear that an affine hemisphere is always of elliptic type.

\begin{theorem} Let $K \subseteq \RR^{n+1}$ be an $n$-dimensional, compact, convex set. Then there exists
an affine hemisphere $M \subseteq \RR^{n+1}$ with anchor $K$, uniquely determined up to affine transformations.
The affine hemisphere $M$ is centered at the Santal\'o point of $K$.
\label{thm1}
\end{theorem}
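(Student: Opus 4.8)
The plan is to translate the statement into a Monge--Amp\`ere equation for a concave function on $K$, to read off from an integral identity that its center can only be the Santal\'o point, and then to solve the equation by a continuity argument. First I would normalize coordinates so that the hyperplane spanned by $K$ is $\RR^n\times\{0\}$ and look for $M$ in the open halfspace $\{x_{n+1}>0\}$. Convexity of $\tilde K$, together with the requirement that $M$ miss $\RR^n\times\{0\}$, forces $\tilde K=\{(x,t)\,;\,x\in K,\ 0\le t\le f(x)\}$ and $M=\{(x,f(x))\,;\,x\in\Relint(K)\}$ for a concave function $f$ on $K$ that is positive on $\Relint(K)$, vanishes on the relative boundary, and has smooth, locally strongly convex graph over $\Relint(K)$. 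Writing $p\in\Relint(K)$ for the (still unknown) center and $\ell_x(q)=f(x)+\langle\nabla f(x),q-x\rangle$ for the affine function whose graph is the tangent hyperplane of $M$ at $(x,f(x))$, I would compute the affine normal line from the barycenter construction in the introduction: its direction at $(x,f(x))$ is proportional to a vector whose $\RR^n$--component is $(-D^2f(x))^{-1}\nabla\log\det(-D^2f(x))$. Imposing that this line pass through $p$ for every $x$, and using the elementary identity $\nabla_x\big(\ell_x(p)\big)=(-D^2f(x))(x-p)$, one integrates the concurrence condition to the Monge--Amp\`ere equation
\begin{equation}
\det\big(-D^2 f(x)\big)\ =\ \big(\ell_x(p)\big)^{\,n+2},\qquad x\in\Relint(K),
\label{eq_plan_MA}
\end{equation}
where the positive constant that a priori multiplies the right--hand side has been normalized to $1$ by a transverse linear map $(x,t)\mapsto(x,\lambda t)$. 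Conversely, any concave $f$ with the stated boundary behaviour and regularity that solves \eqref{eq_plan_MA} produces an affine hemisphere with anchor $K$ and center $p$; so the theorem reduces to the solvability, uniqueness, and identification of the center in \eqref{eq_plan_MA}.

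Next I would pin down $p$ before solving. I expect that multiplying \eqref{eq_plan_MA} by suitable vector fields and integrating by parts over $\Relint(K)$ --- equivalently, a change of variables in $\int_{K^{p}}(\eta-p)\,d\eta$ via the polarity map $x\mapsto-\ell_x(p)^{-1}\nabla f(x)$, using \eqref{eq_plan_MA} together with the fact that $M$ is affinely spherical (in the cone--measure formulation recalled in the introduction) --- yields the identity $\int_{K^{p}}(\eta-p)\,d\eta=0$, where $K^{p}=\{\eta\in\RR^n\,;\,\langle\eta,x-p\rangle\le1\ \text{for all }x\in K\}$ is the polar of $K$ with respect to $p$. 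That is, $\bary(K^{p})=p$, which is precisely the defining property of the Santal\'o point of $K$; hence \emph{any} affine hemisphere with anchor $K$ has its center at the Santal\'o point. From now on $p$ is this fixed point of $\Relint(K)$, and \eqref{eq_plan_MA} is an honest Monge--Amp\`ere equation in the single unknown $f$.

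For existence I would run the continuity method along a path of $n$--dimensional smooth convex bodies joining $K$ to a Euclidean ball, for which \eqref{eq_plan_MA} is solved explicitly by $x\mapsto\sqrt{r^2-|x-c|^2}$, a hemisphere of a round sphere about the ball's center $c$ --- which is its Santal\'o point. Openness follows from the implicit function theorem once one checks that the linearization of \eqref{eq_plan_MA}, which is uniformly elliptic because $-D^2f\succ0$, is an isomorphism between the appropriate weighted H\"older spaces (the weights accounting for the degeneration of the equation as $x\to\partial K$, and the Santal\'o point depending smoothly on the body). Closedness requires a priori estimates, and this is where I expect the real work to be: a two--sided $C^0$ bound for $f$ by comparison with affine cones over $K$ on one side and with tangent hyperplanes on the other; interior gradient and second--derivative bounds of Pogorelov type, exploiting the affine invariance of \eqref{eq_plan_MA}; and, the delicate point, boundary regularity near $\partial K$, where $f\to0$ and $|\nabla f|\to\infty$. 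This boundary analysis is the elliptic counterpart of the one carried out by Cheng and Yau for hyperbolic affine spheres asymptotic to cones, and I anticipate it will be the main obstacle. Finally, for uniqueness up to affine maps: two affine hemispheres with anchor $K$ have the same center by the Pohozaev identity, and a comparison principle for \eqref{eq_plan_MA} --- or strict convexity of a variational functional whose Euler--Lagrange equation is \eqref{eq_plan_MA} --- forces their graphs to agree after a transverse affine normalization; combined with the affine covariance of the construction and the explicit ball case, this closes the continuity argument at $K$ and completes the proof.
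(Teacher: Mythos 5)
Your plan is genuinely different from the paper's, and the difference is precisely where the difficulty lies. You parametrize the hemisphere directly as the graph of a concave function $f$ over $\Relint(K)$, with $f=0$ on the relative boundary, arriving at the degenerate Dirichlet problem $\det(-D^2f)=\ell_x(p)^{n+2}$; the paper instead passes to the Legendre/polar side. Concretely, if $\psi$ denotes the convex function whose graph is the polar hypersurface $M^*$, then $\vphi=\psi^*$ is a finite, positive, strongly convex function on \emph{all} of $\RR^n$ satisfying the second boundary value problem $\det\nabla^2\vphi = C/\vphi^{n+2}$ with $\nabla\vphi(\RR^n)=L$, where $L=\mathrm{int}\bigl((K-p)^\circ\bigr)$ (Theorem 5.10 and Proposition 4.4 of the paper). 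This is the $q$-moment measure problem of Section 3 with $q=2$, and Caffarelli's regularity theory for the transportation problem yields $C^\infty$ smoothness of $\vphi$ on $\RR^n$ for free. The boundary degeneration near $\partial K$ that you flag as ``the main obstacle'' --- $f\to 0$, $|\nabla f|\to\infty$, with Cheng--Yau--type asymptotics needed for closedness in your continuity method --- simply never arises in the dual variable: what would be boundary regularity for $f$ becomes the asymptotic behaviour of $\vphi$ at infinity, and one never has to prove anything about it beyond the linear lower bound of Lemma 3.2. This dual reformulation, together with the obverse/polar machinery of Section 5 that translates between $\graph_L(\psi)$, its polar $M^*$, and $\epigraph(\vphi)$, is the essential trick of the paper.

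There is therefore a genuine gap in your proposal: you correctly identify that closedness in the continuity method requires interior and, crucially, boundary a priori estimates for the degenerate Monge--Amp\`ere equation $\det(-D^2f)=\ell_x(p)^{n+2}$, and you offer no route to them. This is not a minor omission --- it is the entire analytic content of the existence question in the direct formulation, and there is no off-the-shelf theorem covering it (Cheng--Yau solved the analogous asymptotic problem for hyperbolic affine spheres, but the elliptic Dirichlet case over an arbitrary convex $K$, which need not even have smooth boundary, is not literally covered by their estimates). The paper's existence argument is also different in spirit: rather than a continuity method, it is a direct variational argument, minimizing $\psi\mapsto \int\psi\,d\mu + \cI_p^2(\psi)$ and invoking the Borell--Brascamp--Lieb inequality for convexity of the functional; uniqueness falls out of Dubuc's equality case of Borell--Brascamp--Lieb, not a comparison principle. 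Your identification of the center via an integration-by-parts identity is, on the other hand, essentially the paper's argument (Proposition 3.4): there $\int\partial_i\vphi\cdot\vphi^{-(n+q)}\,dx = -\tfrac{1}{n+q-1}\int\partial_i\bigl(\vphi^{1-n-q}\bigr)\,dx=0$ yields $\bary(\mu)=0$, which is the Santal\'o condition once $\mu$ is the normalized Lebesgue measure on $(K-p)^\circ$. One small slip in your write-up: with $K^p=(K-p)^\circ$, the Santal\'o point condition is $\bary(K^p)=0$, not $\bary(K^p)=p$; the $\eta-p$ in your displayed integral should just be $\eta$.
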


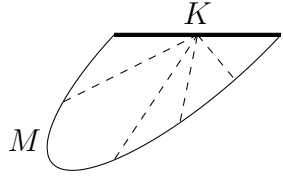
\begin{figure}
\begin{center}
\begin{tikzpicture}[scale = 2]
\draw[ samples=300, domain=-1:-0.555] plot (\x, {   0.5 * (1.5*\x + sqrt(1 - \x*\x))   });
\draw[samples=300, domain=-1:0.555] plot (\x, {   0.5 * (1.5*\x - sqrt(1 - \x*\x))   });
\draw [ultra thick] (-0.555,0) -- (0.555,0);
\draw[dashed, domain=-0.89:0] plot (\x, {\x / 2});
\draw[dashed, domain=0:0.25] plot (\x, {-\x * 1.2 });
\draw[dashed, domain=-0.115:0] plot (\x, {\x * 5 });
\draw[dashed, domain=-0.55:0] plot (\x, {\x *1.5 });
\node [above] at (0,0) {$K$};
\node at (-1.15,-0.7) {$M$};
\end{tikzpicture}
\caption{Half of an ellipse, which  is an affine one-dimensional hemisphere in $\RR^2$. \label{fig1}}
\end{center}
\end{figure}

Thus, with any $n$-dimensional, compact, convex set $K \subseteq \RR^{n+1}$ we associate an $(n+1)$-dimensional, compact, convex set $\tilde{K} \subseteq \RR^{n+1}$
whose boundary consists of two parts: the convex set $K$ itself is a facet, and the rest
of the boundary is an affine hemisphere $M$ centered at the Santal\'o point of $K$.
We refer the reader to Loftin \cite{loft} and to Nomizu and Sasaki \cite{NS} for information about the rich geometric structure associated with
affinely-spherical hypersurfaces. Let us just observe here that
by \cite[Theorem 6.5]{NS}, any affine function in $\RR^{n+1}$ that vanishes on $K$ is an eigenfunction of the affine-metric Laplacian of $M$
with Dirichlet boundary conditions, corresponding to the first eigenvalue.


\medskip The proof of Theorem \ref{thm1} is basically a variant of the {\it moment measure} construction by
Cordero-Erausquin and the author \cite{CK} which is in turn  influenced  by Berman and Berndtsson \cite{BB}
and is also analogous to the classical Minkowski problem.
Let us now present a few questions about affine hemispheres:

\begin{enumerate}
\item Other than half-ellipsoids, we are not aware of any affine hemisphere
that may be described by a simple formula. Is there a closed form
for the affine hemisphere associated with the $n$-dimensional simplex or the $n$-dimensional cube? For  moment measures, the solutions in the case of the simplex and the cube are given by explicit formul\ae,
see \cite{CK}.
\item  Calabi \cite{Cal2} found a composition rule
for hyperbolic affine
spheres, allowing one to construct a
hyperbolic affine
sphere of dimension $n+m+1$ from two hyperbolic affine spheres of dimensions $n$ and $m$.
Is there an analogous construction for affine hemispheres?
\item
An intriguing question is whether an affine hemisphere $M$ can be extended
beyond its anchor $K$, to an affinely-spherical hypersurface $\tilde{M} \supsetneq M$.
When the anchor $K$ is an ellipsoid, the affine hemisphere $M$ with anchor $K$ is half an ellipsoid, and
may clearly be extended to the surface of a full ellipsoid. On the other hand,
if $K$ is a polytope, then the affine hemisphere $M$ cannot be smoothly extended beyond the vertices of $K$.
\item Finally, is there a theory similar  to that of affine hemispheres that is related to {\it parabolic} affinely-spherical hypersurfaces?
See Ferrer, Mart\'inez and Mil\'an \cite{FMM}, Mil\'an \cite{milan} and Remark \ref{rem_1338} below for partial results in this direction. 

\end{enumerate}

\medskip
Throughout this paper, by smooth we always mean $C^{\infty}$-smooth.
We write $| \cdot |$ for the usual Euclidean norm in $\RR^n$, and $S^{n} = \{ x \in \RR^{n+1} \, ; \, |x| = 1 \}$
is the Euclidean unit sphere centered at the origin. The standard scalar product of $x, y \in \RR^n$ is denoted by $\langle x, y \rangle$.
We write  $\log$ for the natural logarithm. For a Borel measure $\mu$ in $\RR^n$ we denote by
$\supp(\mu)$ the support of $\mu$, which is the intersection of all closed sets of a full $\mu$-measure.
A hypersurface in $\RR^{n+1}$ is an $n$-dimensional submanifold of $\RR^{n+1}$.
A submanifold $M \subseteq \RR^{n+1}$ {\it encloses} a convex set $K \subseteq \RR^{n+1}$ if $M$ is the boundary
of $K$ relative to the affine subspace spanned by $K$.

\medskip
\emph{Acknowledgements.} Let me express my gratitude to Bo Berndtsson, Ronen Eldan and Yanir Rubinstein
for interesting discussions and for explanations and references on affine differential geometry.

\section{A variational problem}
\setcounter{equation}{0}
\label{sec_var}

In this section we analyze a variational problem related to affine hemispheres. Similar variational problems were
 considered by Berman and Berndtsson \cite{BB} and by Cordero-Erausquin and the author
\cite{CK}. For a function $\psi: \RR^n \rightarrow \RR \cup \{ + \infty \}$ denote
$$ \dom(\psi) = \left \{ x \in \RR^n \, ; \, \psi(x) < +\infty \right \}. $$
The Legendre transform of $\psi$ is the convex function
$$ \psi^*(y) = \sup_{x \in \dom(\psi)} \left[ \langle x, y \rangle - \psi(x) \right] \qquad \qquad \qquad (y \in \RR^n), $$
where $\sup \emptyset = -\infty$. The function $\psi^*$ is always convex and lower semi-continuous.
A convex function $\psi: \RR^n \rightarrow \RR \cup \{ + \infty \}$ is {\it proper} if it is lower semi-continuous
with $\dom(\psi) \neq \emptyset$. 
When $\psi$ is convex and proper, the Legendre transform $\psi^*$ is again convex and proper, and $\psi^{**} = \psi$.
We will frequently use the formula $\psi^*(0) = -\inf \psi$, as well as the relation $(\lambda \psi)^*(x) = \lambda \psi^*(x/\lambda)$,
which is valid for any $x \in \RR^n$ and $\lambda > 0$. It is also well-known  that for any $v \in \RR^n$, denoting $\psi_1(x) = \psi(x) + \langle x, v \rangle$,
\begin{equation}  \psi_1^*(y) = \psi^*(y - v) \qquad \qquad \qquad (y \in \RR^n). \label{eq_1021} \end{equation}
See Rockafellar \cite{roc} for a thorough discussion of the Legendre transform. For $p > 0$ and a function $\psi: \RR^n \rightarrow \RR \cup \{ + \infty \}$ with $\psi(0) < 0$ we define
\begin{equation}  \cI_p(\psi) = \left( \int_{\RR^n} \frac{dx}{(\psi^*(x))^{n+p}} \right)^{-1/p} \in [0, +\infty]. \label{eq_302}
\end{equation}
Two remarks are in order: First, note that $\inf \psi^* \geq - \psi(0) > 0$, and that the integral in (\ref{eq_302}) is a well-defined
element of $[0, +\infty]$. Second, for the purpose
of  definition (\ref{eq_302}) let us agree that $0^{-\alpha} = +\infty$ and $(+\infty)^{-\alpha} = 0$ for $\alpha > 0$.
The functional $\cI_p$ is closely related to the Borell-Brascamp-Lieb inequality \cite{Bor, BL}.
The latter inequality, which is a variant of Brunn-Minkowski, states the following:
For any $0 < \lambda < 1$  and three convex functions $\vphi_{\lambda},\vphi_0,\vphi_1: \RR^n \rightarrow (0, +\infty]$ such that
\begin{equation} \vphi_{\lambda} \left( (1 - \lambda) x + \lambda y  \right) \leq (1 - \lambda) \vphi_0(x) + \lambda \vphi_1(y) \qquad \qquad \qquad (x, y \in \RR^n), \label{eq_1153}
\end{equation}
we have,
\begin{equation} \left( \int_{\RR^n} \frac{dx}{\vphi_{\lambda}(x)^{n+p}} \right)^{-1/p} \leq (1 - \lambda)
\left( \int_{\RR^n} \frac{dx}{\vphi_0(x)^{n+p}} \right)^{-1/p} + \lambda \left( \int_{\RR^n} \frac{dx}{\vphi_1(x)^{n+p}} \right)^{-1/p}. \label{eq_1155} \end{equation}
The Borell-Brascamp-Lieb inequality, sometimes called the dimensional Pr\'ekopa inequality, implies the convexity of $\cI_p$
as is stated in the following:

\begin{lemma} Let $p, \lambda > 0$, and let $\psi, \psi_0, \psi_1: \RR^n \rightarrow \RR \cup \{ + \infty \}$
be functions that are negative at zero. Denote $\vphi = \psi^*, \vphi_0 = \psi_0^*$ and $\vphi_1 = \psi_1^*$. Then the following hold:  \begin{enumerate} \item[(i)] $\displaystyle \cI_p(\lambda \psi) = \lambda \cI_p(\psi)$.
\item[(ii)] $\displaystyle \cI_p( \psi_0 + \psi_1 ) \leq \cI_p(\psi_0) + \cI_p(\psi_1)$.
\item[(iii)] Assume that $\dom(\vphi_0) = \dom(\vphi_1) = \RR^n$.
Then equality in (ii) holds if and only if there exist $x_0 \in \RR^n$ and $\lambda > 0$ such that
$$\vphi_1(x) = \lambda \vphi_0( x_0 +  x / \lambda ) \qquad \qquad \qquad \text{for all} \ x \in \RR^n.
 $$
\end{enumerate} \label{lem_1530}
\end{lemma}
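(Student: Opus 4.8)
The plan is to read everything off the Borell--Brascamp--Lieb inequality \eqref{eq_1155} and its equality case. Part (i) is immediate: since $(\lambda\psi)^*(x)=\lambda\psi^*(x/\lambda)$, the substitution $x=\lambda u$ gives
\begin{equation*}
\int_{\RR^n}\frac{dx}{\bigl((\lambda\psi)^*(x)\bigr)^{n+p}}=\lambda^{-(n+p)}\int_{\RR^n}\frac{dx}{\bigl(\psi^*(x/\lambda)\bigr)^{n+p}}=\lambda^{-p}\int_{\RR^n}\frac{du}{\bigl(\psi^*(u)\bigr)^{n+p}},
\end{equation*}
and raising to the power $-1/p$ yields $\cI_p(\lambda\psi)=\lambda\,\cI_p(\psi)$. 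In particular $\cI_p$ is positively $1$-homogeneous, so (ii) will follow once we prove the convexity estimate
\begin{equation}
\cI_p\bigl((1-t)\psi_0+t\psi_1\bigr)\ \le\ (1-t)\,\cI_p(\psi_0)+t\,\cI_p(\psi_1)\qquad(0<t<1),
\label{eq_cvx}
\end{equation}
because \eqref{eq_cvx} with $t=\tfrac12$, together with (i), gives $\cI_p(\psi_0+\psi_1)=2\,\cI_p\bigl(\tfrac{\psi_0+\psi_1}{2}\bigr)\le\cI_p(\psi_0)+\cI_p(\psi_1)$.

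To prove \eqref{eq_cvx}, put $\psi_t=(1-t)\psi_0+t\psi_1$ (still negative at zero) and define the convex function
\begin{equation*}
\vphi_t(z)=\inf\Bigl\{(1-t)\vphi_0(u)+t\vphi_1(w)\ :\ (1-t)u+tw=z\Bigr\}\qquad(z\in\RR^n),
\end{equation*}
which is the partial infimum along a linear constraint of a jointly convex function. Since $\vphi_i=\psi_i^*\ge-\psi_i(0)>0$ everywhere, we get $\vphi_t\ge-\psi_t(0)>0$ everywhere, and $\vphi_t$ satisfies \eqref{eq_1153} by construction (take $u=x$, $w=y$). The key point is that $\psi_t^*\le\vphi_t$ pointwise: for $(1-t)u+tw=z$,
\begin{equation*}
\psi_t^*(z)=\sup_{x\in\RR^n}\Bigl[(1-t)\bigl(\langle x,u\rangle-\psi_0(x)\bigr)+t\bigl(\langle x,w\rangle-\psi_1(x)\bigr)\Bigr]\le(1-t)\vphi_0(u)+t\vphi_1(w),
\end{equation*}
and taking the infimum over admissible $(u,w)$ gives $\psi_t^*(z)\le\vphi_t(z)$. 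Hence, by monotonicity of the integral and then \eqref{eq_1155} applied to $(\vphi_t,\vphi_0,\vphi_1)$,
\begin{equation*}
\cI_p(\psi_t)=\Bigl(\int_{\RR^n}\frac{dx}{(\psi_t^*(x))^{n+p}}\Bigr)^{-1/p}\le\Bigl(\int_{\RR^n}\frac{dx}{\vphi_t(x)^{n+p}}\Bigr)^{-1/p}\le(1-t)\cI_p(\psi_0)+t\cI_p(\psi_1),
\end{equation*}
which is \eqref{eq_cvx}; the conventions $0^{-\alpha}=+\infty$, $(+\infty)^{-\alpha}=0$ absorb all degenerate cases. This proves (ii).

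For (iii) I first record, using \eqref{eq_1021} and $(\lambda\psi)^*(x)=\lambda\psi^*(x/\lambda)$, that the relation $\vphi_1(x)=\lambda\vphi_0(x_0+x/\lambda)$ is equivalent to $\psi_1^{**}=\lambda\psi_0^{**}-\lambda\langle x_0,\cdot\rangle$ (here $\psi_i^{**}=\vphi_i^*$). The implication ``$\Leftarrow$'' is a direct computation: under this relation one checks that $(\psi_0+\psi_1)^*=(1+\lambda)\vphi_0\bigl(\tfrac{\,\cdot\,+\lambda x_0}{1+\lambda}\bigr)$, whence $\cI_p(\psi_0+\psi_1)=(1+\lambda)\cI_p(\psi_0)=\cI_p(\psi_0)+\cI_p(\psi_1)$ by (i) and the translation invariance $\cI_p(\psi+\langle v,\cdot\rangle)=\cI_p(\psi)$ that follows from \eqref{eq_1021}. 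For ``$\Rightarrow$'' assume equality holds in (ii). Note that $\dom(\vphi_0)=\dom(\vphi_1)=\RR^n$ forces $\vphi_i$ finite everywhere, hence $\cI_p(\psi_i)<\infty$, so we are in the non-degenerate case (the cases $\cI_p(\psi_i)\in\{0\}$, i.e. $\vphi_i$ constant, are handled trivially and the asserted equivalence holds there too). By (i), $\cI_p\bigl(\tfrac{\psi_0+\psi_1}{2}\bigr)=\tfrac12\cI_p(\psi_0)+\tfrac12\cI_p(\psi_1)$, while the proof of \eqref{eq_cvx} with $t=\tfrac12$ gives
\begin{equation*}
\cI_p\bigl(\tfrac{\psi_0+\psi_1}{2}\bigr)\le\Bigl(\int_{\RR^n}\frac{dx}{\vphi_{1/2}(x)^{n+p}}\Bigr)^{-1/p}\le\tfrac12\cI_p(\psi_0)+\tfrac12\cI_p(\psi_1),
\end{equation*}
so both inequalities are equalities. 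In particular equality holds in the Borell--Brascamp--Lieb inequality \eqref{eq_1155} for the triple $(\vphi_{1/2},\vphi_0,\vphi_1)$ with $\lambda=\tfrac12$, where $\vphi_{1/2}$ is the infimal-convolution interpolant above.

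The main obstacle is precisely the equality case of \eqref{eq_1155}: one must show that if $\vphi_0,\vphi_1:\RR^n\to(0,+\infty]$ are convex with $\dom(\vphi_0)=\dom(\vphi_1)=\RR^n$ and $0<\int\vphi_i^{-(n+p)}<\infty$, and equality holds in \eqref{eq_1155} for their interpolant $\vphi_{1/2}$, then $\vphi_1(x)=c\,\vphi_0(x_0+x/c)$ for some $c>0$ and $x_0\in\RR^n$. This is the equality case of the dimensional Pr\'ekopa (Borell--Brascamp--Lieb) inequality; it is known, and I would obtain it from the equality case of the Brunn--Minkowski inequality --- which holds precisely for pairs of homothetic convex bodies --- via the standard correspondence turning the functional $\vphi\mapsto\int\vphi^{-(n+p)}$ into a volume and infimal convolution into Minkowski addition (with a one-dimensional/approximation argument to reach non-integral~$p$). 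Granting it, set $\lambda=c$: this is exactly the asserted condition $\vphi_1(x)=\lambda\vphi_0(x_0+x/\lambda)$, which closes the ``$\Rightarrow$'' direction and completes the proof of (iii). The delicate points I expect are the careful treatment of this equality case (especially for non-integral $p$, and the translation of the conclusion back through the Legendre transform) and the verification of the short identity used in the ``$\Leftarrow$'' direction.
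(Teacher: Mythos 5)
Your parts (i) and (ii) track the paper's proof: part (i) is the same change-of-variables computation, and for part (ii) the paper also reduces to the $\lambda=\tfrac12$ case of Borell--Brascamp--Lieb together with the homogeneity from (i). The one genuine difference in (ii) is that you introduce the infimal-convolution interpolant $\vphi_t$ and then compare $\psi_t^*\le\vphi_t$, whereas the paper simply verifies directly that the triple $\bigl(\psi_{1/2}^*,\vphi_0,\vphi_1\bigr)$ already satisfies the hypothesis \eqref{eq_1153}:
\[
\psi_{1/2}^*\Bigl(\tfrac{x+y}{2}\Bigr)\;\le\;\tfrac12\bigl(\vphi_0(x)+\vphi_1(y)\bigr),
\]
so the extra interpolant is an unnecessary detour (though not wrong).

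For (iii) the heart of the matter is, as you correctly identify, the equality case of \eqref{eq_1155}. The paper disposes of this in one line by citing Dubuc \cite{dubuc}, who proves precisely that equality in the Borell--Brascamp--Lieb inequality with finite convex $\vphi_0,\vphi_1:\RR^n\to(0,\infty)$ forces $\vphi_1(x)=\lambda\vphi_0(x_0+x/\lambda)$. You instead propose to re-derive this from the equality case of Brunn--Minkowski via the ``volume of the epigraph/sublevel sets'' correspondence, with an approximation argument to handle non-integral $p$. This is a plausible plan but it is not carried out, and it is not a routine corollary: the hypograph-dimension trick that turns $\int\vphi^{-(n+p)}$ into a volume needs care about how equality in Brunn--Minkowski in $\RR^{n+m}$ for the relevant sets translates back into the stated functional conclusion, and the passage to non-integral $p$ requires its own stability argument. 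Since you explicitly flag this as the ``main obstacle'' and only sketch how you would obtain it, that step remains a gap in your argument. The cleanest fix is to do what the paper does and invoke Dubuc's theorem, which is tailored to exactly this situation. Your explicit verification of the ``$\Leftarrow$'' direction (computing $(\psi_0+\psi_1)^*$ under the homothety hypothesis) is a nice addition that the paper leaves implicit in the citation of an ``if and only if''; the computation is correct when $\psi_i=\psi_i^{**}$, which is the relevant case since $\cI_p$ depends on $\psi$ only through $\psi^*$.
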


\begin{proof} By using the formula $(\lambda \psi)^*(x) = \lambda \vphi(x/ \lambda)$, which is valid for any $x \in \RR^n$, we obtain
\begin{align*}
\cI_p(\lambda \psi) = \left( \int_{\RR^n} \frac{dx}{(\lambda \vphi(x / \lambda))^{n+p}} \right)^{-1/p} = \lambda^{\frac{n+p}{p}}
\cdot \lambda^{-\frac{n}{p}}  \left( \int_{\RR^n} \frac{dx}{\vphi(x)^{n+p}} \right)^{-1/p} = \lambda \cI_p(\psi).
\end{align*}
Thus (i) is proven. Next, denote  $\vphi_{1/2} = \left[ (\psi_0 + \psi_1)/2 \right]^*$. Then $\vphi_0, \vphi_1, \vphi_{1/2}: \RR^n \rightarrow (0, +\infty]$ are  convex functions, and for any $x, y \in \RR^n$,
\begin{align*}
\vphi_{1/2} & \left( \frac{x+y}{2} \right)  = \sup_{z \in \dom(\psi_0) \cap \dom(\psi_1) } \left[ \left \langle \frac{x+y}{2}, z \right\rangle - \frac{\psi_0(z) + \psi_1(z)}{2} \right]
\\ & \leq \frac{1}{2} \left \{ \sup_{z \in \dom(\psi_0)} \left[ \langle x, z \rangle - \psi_0(z) \right] + \sup_{z \in \dom(\psi_1)} \left[ \langle y, z \rangle - \psi_1(z) \right]
\right \} = \frac{\vphi_0(x) + \vphi_1(y)}{2}.
\end{align*}
Hence condition (\ref{eq_1153}) is satisfied, with $\lambda = 1/2$. The case $\lambda = 1/2$ of the Borell-Brascamp-Lieb inequality (\ref{eq_1155}) implies that
$$ \cI_p \left( \frac{\psi_0 + \psi_1}{2} \right) \leq \frac{\cI_p(\psi_0) + \cI_p(\psi_1)}{2} $$
and (ii) now follows from (i). According to Dubuc \cite{dubuc}, equality holds in (\ref{eq_1155}), with $\vphi_0, \vphi_1: \RR^n \rightarrow (0, +\infty)$ being convex functions,
if and only if there exist $\lambda > 0$ and $x_0 \in \RR^n$ such that
$\vphi_1(x) = \lambda \vphi_0( x_0 + x/\lambda )$ for all $x \in \RR^n$. This proves (iii).
\end{proof}

The next lemma describes a lower semi-continuity property of the functional $\cI_p$.

\begin{lemma} Let $p > 0$ and let $K \subseteq \RR^n$ be a convex, open set containing the origin.
Let $\psi: \RR^n \rightarrow \RR \cup \{ + \infty \}$ be a convex function with $\psi(0) < 0$ such that $K \subseteq \dom(\psi) \subseteq \overline{K}$.
Assume that for any $\ell \geq 1$ we are given a function $\psi_{\ell}: \RR^n \rightarrow \RR \cup \{ + \infty \}$ with $\psi_{\ell}(0) < 0$, such that $\psi_{\ell} \longrightarrow \psi$ pointwise in the set $K$ as $\ell \rightarrow \infty$. Then,
$$ \cI_p(\psi) \leq \liminf_{\ell \rightarrow \infty} \cI_p(\psi_\ell). $$
\label{lem_1402}
\end{lemma}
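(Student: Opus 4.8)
The plan is to pass to the integrals appearing in the definition of $\cI_p$. Write $J(\chi) = \int_{\RR^n} (\chi^*(x))^{-(n+p)}\,dx \in [0,+\infty]$, so that $\cI_p(\chi) = J(\chi)^{-1/p}$ for any $\chi$ with $\chi(0) < 0$. Since $t \mapsto t^{-1/p}$ is a decreasing bijection of $[0,+\infty]$ onto itself (with the conventions $0^{-1/p} = +\infty$, $(+\infty)^{-1/p} = 0$), the asserted inequality $\cI_p(\psi) \le \liminf_{\ell} \cI_p(\psi_\ell)$ is equivalent to $\limsup_{\ell} J(\psi_\ell) \le J(\psi)$. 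So the goal is a \emph{reverse} Fatou estimate for the integrands $(\psi_\ell^*)^{-(n+p)}$, and the heart of the matter is to exhibit one integrable function on $\RR^n$ dominating all of them simultaneously.

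First I would show that the supremum defining $\psi^*$ may be restricted to $K$, i.e. $\psi^*(y) = \sup_{x \in K}[\langle x, y\rangle - \psi(x)]$ for every $y \in \RR^n$. Indeed, if $x_0 \in \dom(\psi) \setminus K \subseteq \partial K$ and $x_1 \in K$ is arbitrary, then $x_t := (1-t)x_1 + t x_0 \in K$ for $0 \le t < 1$ because $K$ is open and convex, and convexity of $\psi$ gives $\langle x_t, y\rangle - \psi(x_t) \ge \langle x_t, y\rangle - (1-t)\psi(x_1) - t\psi(x_0) \to \langle x_0, y\rangle - \psi(x_0)$ as $t \to 1^-$; no lower semicontinuity of $\psi$ is needed. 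Combining this with the trivial inequality $\psi_\ell^*(y) \ge \langle x, y\rangle - \psi_\ell(x)$ (valid for each $x \in K$) and the pointwise convergence $\psi_\ell(x) \to \psi(x)$ on $K$, one takes $\liminf_{\ell}$ and then $\sup_{x \in K}$ to obtain $\liminf_{\ell} \psi_\ell^*(y) \ge \psi^*(y)$ for every $y$, and hence the pointwise bound $\limsup_{\ell} (\psi_\ell^*(y))^{-(n+p)} \le (\psi^*(y))^{-(n+p)}$.

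Next comes the main step: a uniform integrable majorant. Since $0 \in K$ and $K$ is open, choose $\delta > 0$ with $\pm\delta e_1,\dots,\pm\delta e_n \in K$, where $e_1,\dots,e_n$ is the standard basis. By pointwise convergence at the finitely many points $0$ and $\pm\delta e_i$, there are an index $\ell_0$ and a finite constant $M$ with $\psi_\ell(0) \le \psi(0)/2 < 0$ and $\psi_\ell(\pm\delta e_i) \le M$ for all $\ell \ge \ell_0$ and all $i$. Testing the supremum defining $\psi_\ell^*$ against the points $0$ and $\pm\delta e_i$ yields, for $\ell \ge \ell_0$ and all $y$, $\psi_\ell^*(y) \ge \max(-\psi(0)/2,\ \delta\max_i|y_i| - M) \ge c_1(1+|y|)$ for a suitable $c_1 = c_1(\psi(0),\delta,M,n) > 0$ — the last inequality because the displayed maximum is bounded below by a fixed positive number and grows at least linearly in $|y|$. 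Therefore $(\psi_\ell^*(y))^{-(n+p)} \le c_1^{-(n+p)}(1+|y|)^{-(n+p)}$ for all $\ell \ge \ell_0$, and this majorant is integrable over $\RR^n$ exactly because $p > 0$. Finally, applying Fatou's lemma to the nonnegative functions $c_1^{-(n+p)}(1+|y|)^{-(n+p)} - (\psi_\ell^*(y))^{-(n+p)}$, together with the pointwise bound from the previous paragraph, gives $\limsup_{\ell} J(\psi_\ell) \le \int_{\RR^n}\limsup_{\ell}(\psi_\ell^*)^{-(n+p)} \le J(\psi)$, which is the claim.

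The step I expect to be the real obstacle is the construction of the majorant: it must be uniform in $\ell$ and integrable at infinity at once, and this is precisely where the hypotheses $0 \in K$, $K$ open and $p > 0$ enter, and where one extracts mileage from pointwise (as opposed to uniform) convergence by testing against only finitely many points. The reduction of the Legendre supremum to $K$ is routine convex analysis, but it is what allows the argument to require nothing about the $\psi_\ell$ outside $K$.
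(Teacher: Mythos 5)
Your proposal is correct and follows essentially the same route as the paper: establish the pointwise bound $\psi^*(y) \le \liminf_\ell \psi_\ell^*(y)$ via pointwise convergence on $K$ together with restricting the Legendre supremum to $K$, then manufacture an $\ell$-uniform integrable minorant of $\psi_\ell^*$ by testing the supremum at finitely many points of $K$ (you use $0$ and $\pm\delta e_i$; the paper uses $\pm v_i$ with $\psi(\pm v_i) < -\eps$), and conclude by a dominated-convergence/reverse-Fatou argument. The differences are cosmetic.
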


\begin{proof} The convex function $\psi$ is finite and hence continuous in the convex, open set $K$. Since $0 \in K$ and $\psi(0) < 0$, we may find $\eps > 0$ and linearly independent vectors $v_1,\ldots,v_n \in K$ such that
$$ \psi( \pm v_i ) < - \eps \qquad \text{for} \ i=1,\ldots, n. $$
By the pointwise convergence in $K$, there exists $\ell_0$ such that $\psi_{\ell}(\pm v_i) < -\eps$ for all $\ell \geq \ell_0$
and $i=1,\ldots,n$. The convex hull of the $2n$ points $\{ \pm v_i \, ; \, i=1,\ldots, n \}$ contains a Euclidean ball
of radius $\delta > 0$ centered at the origin. Consequently, for $\ell \geq \ell_0$ and $x \in \RR^n$,
\begin{equation} \psi_{\ell}^*(x) = \sup_{y \in \dom(\psi_{\ell})} \left[ \langle x, y \rangle -
\psi_{\ell}(x) \right] \geq \sup_{i=1,\ldots,n} \left[ \left| \langle x, v_i \rangle \right| + \eps \right ] \geq \eps + \delta |x|.
\label{eq_1219}
\end{equation}
Next,  we claim that for any $x_0 \in \RR^n$,
\begin{equation}
\psi^*(x_0) \leq \liminf_{\ell \rightarrow \infty} \psi_{\ell}^*(x_0). \label{eq_1030}
\end{equation}
Indeed, since $\psi$ is convex, its restriction to any line segment in the convex set $\dom(\psi)$ is upper semi-continuous (see, e.g., \cite{GKR}).
From the inclusion $\dom(\psi) \subseteq \overline{K}$ we thus learn that
$$ \psi^*(x_0) = \sup_{y \in \dom(\psi)} \left[ \langle x_0, y \rangle - \psi(y) \right] =
\sup_{y \in K} \left[ \langle x_0, y \rangle - \psi(y) \right]. $$
Hence, for any $\eps > 0$
there exists $y_0 \in K$ such that $\psi^*(x_0) \leq \eps + \langle x_0, y_0 \rangle - \psi(y_0)$.
By the pointwise convergence in $K$, for a sufficiently large $\ell$ we observe that $\psi_{\ell}(y_0) \leq \psi(y_0) + \eps$.
Therefore, for a sufficiently large $\ell$,
$$ \psi_{\ell}^*(x_0) \geq \langle x_0, y_0 \rangle - \psi_{\ell}(y_0) \geq -\eps + \langle x_0, y_0 \rangle - \psi(y_0) \geq -2 \eps + \psi^*(x_0) $$
and (\ref{eq_1030}) is proven. The function $(\eps + \delta |x|)^{-(n+p)}$ is integrable in $\RR^n$.
 Thanks to (\ref{eq_1219}) and (\ref{eq_1030})  we may use the dominated convergence theorem, and conclude that
\begin{align*} \nonumber \int_{\RR^n} \frac{dx}{\left( \psi^*(x) \right)^{n+p}} & \geq \int_{\RR^n} \left[ \lim_{\ell \rightarrow \infty} \sup_{k \geq \ell} \frac{1}{\left( \psi_{k} ^*(x) \right)^{n+p}} \right] dx = \lim_{\ell \rightarrow \infty}  \int_{\RR^n} \left[ \sup_{k \geq \ell} \frac{1}{\left( \psi_{k} ^*(x) \right)^{n+p}} \right] dx \\
& = \limsup_{\ell \rightarrow \infty}  \int_{\RR^n} \left[ \sup_{k \geq \ell} \frac{1}{\left( \psi_{k} ^*(x) \right)^{n+p}} \right] dx
\geq \limsup_{\ell \rightarrow \infty}  \int_{\RR^n} \frac{dx}{\left( \psi_{\ell} ^*(x) \right)^{n+p}}.
\tag*{\qedhere}
\end{align*}
\end{proof}

The next theorem is our main result in this section.
It is essentially a theorem about the Legendre transform of the functional $\cI_p^2$, viewed as a convex functional on an infinite-dimensional cone.

 \begin{theorem} Let $p > 0$ and let $\mu$ be a Borel probability measure on $\RR^n$ with $\int_{\RR^n} |x| d \mu(x) < +\infty$ such that the barycenter of
$\mu$ lies at the origin. Assume that the origin belongs to the interior of $\conv(\supp(\mu))$.
Then there exists a $\mu$-integrable, proper, convex function $\psi: \RR^n \rightarrow \RR \cup \{ + \infty \}$ with $\psi(0) < 0$
such that
\begin{equation} \int_{\RR^n} \psi d \mu + \left( \int_{\RR^n} \frac{dx}{(\psi^*(x))^{n+p}} \right)^{-2/p}
\leq \int_{\RR^n} \psi_1 d \mu + \left( \int_{\RR^n} \frac{dx}{(\psi_1^*(x))^{n+p}} \right)^{-2/p} \label{eq_1046} \end{equation}
for any $\mu$-integrable, proper, convex function $\psi_1: \RR^n \rightarrow \RR \cup \{ + \infty \}$ with $\psi_1(0) < 0$.
Moreover, the expression on the left-hand side of (\ref{eq_1046}) is a finite, negative number, and $\psi(x) = +\infty$ for
any $x \in \RR^n \setminus \overline{K}$ where $K$ is the interior of $\conv(\supp(\mu))$.
\label{cor_515}
\end{theorem}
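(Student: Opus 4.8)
\emph{Plan.} I would use the direct method after normalising by the two symmetries of the functional $F(\psi):=\int_{\RR^n}\psi\,d\mu+\cI_p(\psi)^2$ (the left‑hand side of (\ref{eq_1046})). Because the barycenter of $\mu$ is the origin, (\ref{eq_1021}) shows $F$ is invariant under the tilt $\psi(x)\mapsto\psi(x)+\langle x,v\rangle$, $v\in\RR^n$; and by Lemma~\ref{lem_1530}(i), $F(\lambda\psi)=\lambda\int\psi\,d\mu+\lambda^2\cI_p(\psi)^2$ for $\lambda>0$, so minimising this quadratic in $\lambda$ gives $F(\psi)\ge -\bigl(\int\psi\,d\mu\bigr)^2\big/\bigl(4\,\cI_p(\psi)^2\bigr)$, with equality for one $\lambda>0$ when $\int\psi\,d\mu<0$ (and $F(\psi)\ge0$ otherwise). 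Hence it is enough to treat the normalised problem: let $\mathcal{C}$ be the class of $\mu$-integrable, proper, convex $\psi$ with $\psi(0)<0$ and $\cI_p(\psi)\le1$, and set $R:=\sup_{\psi\in\mathcal{C}}\bigl(-\int_{\RR^n}\psi\,d\mu\bigr)$. Once I know $0<R<\infty$ and that the supremum is attained at some $\psi^{\sharp}$, I get $\cI_p(\psi^{\sharp})=1$ (otherwise $\psi^{\sharp}/\cI_p(\psi^{\sharp})\in\mathcal{C}$ beats $R$), and then $\widehat\psi:=(R/2)\psi^{\sharp}$ satisfies $F(\widehat\psi)=-R^2/4$; dividing any admissible competitor $\psi_1$ with $\int\psi_1\,d\mu<0$ by $\cI_p(\psi_1)$ lands it in $\mathcal{C}$, so $-\int\psi_1\,d\mu\le R\,\cI_p(\psi_1)$ and hence $F(\psi_1)\ge -R^2/4$ from the displayed lower bound (while $F(\psi_1)\ge0$ when $\int\psi_1\,d\mu\ge0$). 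Thus $\widehat\psi$ minimises $F$, the value $-R^2/4$ is finite and negative, and (see below) $\widehat\psi$ has the asserted domain.

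\emph{The a priori bound $R<\infty$ will be the main obstacle.} First extract $\delta_0>0$ with $\int_{\RR^n}\max(0,\langle x,\theta\rangle)\,d\mu(x)\ge\delta_0$ for every unit vector $\theta$: the integrand is dominated by $|x|\in L^1(\mu)$, so the left side is continuous in $\theta$, and it is positive on the sphere since its vanishing at $\theta_0$ would place $\supp(\mu)$, hence $\conv(\supp(\mu))$, in the halfspace $\{\langle x,\theta_0\rangle\le0\}$, which is impossible by hypothesis. (This also gives $R>0$: the convex function equal to $-1$ on $\overline K$ and $+\infty$ off $\overline K$ is $\mu$-integrable with value $-1$ at the origin, and its Legendre transform $h_{\overline K}+1$ — where $h_{\overline K}$ is the support function of $\overline K$, which satisfies $h_{\overline K}(\theta)\ge\int\max(0,\langle x,\theta\rangle)\,d\mu\ge\delta_0$ — is bounded below by $\delta_0|x|+1$, so $\cI_p$ of this function is a positive real and a scalar multiple of it lies in $\mathcal{C}$ with $-\int\psi\,d\mu>0$.) Now take $\psi\in\mathcal{C}$ with $\int\psi\,d\mu\le0$. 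Since $\mu$-integrability forces $\dom\psi$ to contain $\mathrm{int}(\conv(\supp(\mu)))\ni0$, the subdifferential $\partial\psi(0)$ is nonempty, and a tilt lets me assume $\psi$ attains its infimum at $0$; then $m:=-\psi(0)=-\inf\psi=\inf\psi^{*}$, so $\psi^{*}\ge m$. For $t\ge m$, $L_t:=\{\psi^{*}\le t\}$ is a convex set containing $0$; from $\psi=\psi^{**}\ge\sup_{y\in L_t}\langle\cdot,y\rangle-t$ its support function obeys $h_{L_t}\le\psi+t$, whence $\int h_{L_t}\,d\mu\le\int\psi\,d\mu+t\le t<\infty$. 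As $h_{L_t}\ge0$, this finiteness forces $L_t$ to be bounded (else $h_{L_t}\equiv+\infty$ on a halfspace of positive $\mu$-measure); picking $z^{\star}\in L_t$ of maximal norm $\rho_t$ and using $h_{L_t}(x)\ge\max(0,\langle x,z^{\star}\rangle)$ gives $\delta_0\rho_t\le\int h_{L_t}\,d\mu\le t$. Hence $L_t\subseteq\{|x|\le t/\delta_0\}$, i.e.\ $\psi^{*}(x)\ge\delta_0|x|$ once $|x|\ge m/\delta_0$, so $\psi^{*}(x)\ge\max(m,\delta_0|x|)$ for all $x$. Feeding this into $1\le\cI_p(\psi)^{-p}=\int_{\RR^n}dx\big/\psi^{*}(x)^{n+p}$ and splitting at $|x|=m/\delta_0$ yields $1\le C_{n,p}\,\delta_0^{-n}m^{-p}$ for an explicit finite $C_{n,p}$, so $m\le A:=(C_{n,p}\,\delta_0^{-n})^{1/p}$; by Jensen (barycenter $0$) $-\int\psi\,d\mu\le-\psi(0)=m\le A$, and therefore $R\le A<\infty$.

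\emph{Attaining $R$, and the domain.} Replacing any $\psi$ by the function equal to $\psi$ on $\overline K$ and $+\infty$ off it leaves $\int\psi\,d\mu$ unchanged (as $\mu(\RR^n\setminus\overline K)=0$) and only decreases $\cI_p$, so I may take a maximising sequence $\psi_\ell\in\mathcal{C}$ that is tilt-normalised with $\dom\psi_\ell\subseteq\overline K$. Then $\psi_\ell\ge-A$, and the transforms $\phi_\ell:=\psi_\ell^{*}$ obey $\phi_\ell(x)\ge\delta_0|x|$, $\phi_\ell(0)=m_\ell\in[R/2,A]$ for large $\ell$ (with $m_\ell\to$ a limit $\ge R$, since $m_\ell\ge-\int\psi_\ell\,d\mu\to R$ by Jensen), and $\phi_\ell\le h_{\overline K}+A$. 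When $\supp(\mu)$ is bounded, $\overline K$ is compact, $h_{\overline K}$ is finite, and the $\phi_\ell$ are convex and locally uniformly bounded, so along a subsequence $\phi_\ell\to\phi$ locally uniformly, $\phi$ convex with $\delta_0|x|\le\phi\le h_{\overline K}+A$; put $\psi^{\sharp}:=\phi^{*}$, so $(\psi^{\sharp})^{*}=\phi$, and from $\phi\le h_{\overline K}+A$ one gets $\psi^{\sharp}\ge\iota_{\overline K}-A$ (the Legendre transform of the support function of the closed convex set $\overline K$ is its convex indicator $\iota_{\overline K}$), i.e.\ $\psi^{\sharp}\ge-A$ with $\dom\psi^{\sharp}\subseteq\overline K$. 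Standard properties of Legendre duality give $\liminf_\ell\psi_\ell\ge\psi^{\sharp}$ pointwise, so Fatou yields $\int\psi^{\sharp}\,d\mu\le\liminf_\ell\int\psi_\ell\,d\mu=-R$; dominated convergence (with dominating function built from $\phi_\ell\ge\max(R/2,\delta_0|x|)$) gives $\cI_p(\psi_\ell)\to\cI_p(\psi^{\sharp})\le1$; and $\psi^{\sharp}(0)=-\inf\phi=-\phi(0)=-\lim_\ell m_\ell\le-R<0$. Hence $\psi^{\sharp}\in\mathcal{C}$ with $-\int\psi^{\sharp}\,d\mu\ge R$, so it attains $R$. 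Since $\mu$-integrability puts $\supp(\mu)$ in $\overline{\dom\psi^{\sharp}}$, we have $K=\mathrm{int}(\conv(\supp(\mu)))\subseteq\dom\psi^{\sharp}\subseteq\overline K$, so $\widehat\psi=(R/2)\psi^{\sharp}$ equals $+\infty$ off $\overline K$, as claimed. The unbounded-support case reduces to the bounded one by approximating $\mu$; the one genuinely new step is the a priori bound in the middle paragraph, the rest being lower-semicontinuity arguments in the spirit of Lemmas~\ref{lem_1530} and~\ref{lem_1402}.
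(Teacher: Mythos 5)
Your reformulation is a genuinely different route from the paper's. Klartag works on the primal side throughout: he tilt-normalizes so that $\psi(0)=\inf\psi$, proves the coercivity bound $\cI_{\mu,p}(\psi)\ge -\alpha+\tilde c\,\alpha^2$ with $\alpha=-\psi(0)$ (Lemmas~\ref{lem_948}--\ref{lem_406}), extracts a pointwise-convergent minimizing subsequence on $K=\mathrm{int}\,\conv(\supp\mu)$ via Rockafellar's local-boundedness selection theorem together with the inequality (\ref{eq_520}) from \cite{CK}, and closes with the lower-semicontinuity Lemma~\ref{lem_1402}. You instead factor out the two symmetries up front, pass to the normalized class $\mathcal C=\{\cI_p\le1\}$, reduce everything to the scalar $R=\sup_{\mathcal C}(-\int\psi\,d\mu)$, and then carry out the compactness on the \emph{dual} side, working with $\phi_\ell=\psi_\ell^*$. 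Your a~priori bound $\psi^*\ge\max(m,\delta_0|x|)$, and the resulting explicit $m\le A=(C_{n,p}\delta_0^{-n})^{1/p}$, is in the same spirit as the paper's $\psi^*(y)\ge\alpha/2+c|y|$ but obtains a cleaner normalization; and the reduction of the minimum value to $-R^2/4$ is a nice, more quantitative package. Those parts hold up under scrutiny: the lower bound $R<\infty$, the value $R>0$, $\cI_p(\psi^\sharp)=1$, and the passage from the $\mathcal C$-maximizer to the minimizer of $\cI_{\mu,p}$ are all correct.

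However, there is a genuine gap: the theorem only assumes $\int|x|\,d\mu<\infty$, not compact support, and your compactness argument degenerates when $\supp(\mu)$ is unbounded. The uniform upper envelope $\phi_\ell\le h_{\overline K}+A$ is the only thing that gives you local uniform boundedness of $\phi_\ell$ and hence a locally uniformly convergent subsequence, and this envelope is useless if $\overline K$ is unbounded (in which case $h_{\overline K}\equiv+\infty$ off a subspace). The constraint $\int\phi_\ell^{-(n+p)}\ge1$ is an integral constraint and does not by itself yield a pointwise upper bound for $\phi_\ell$ on compacta, so you cannot recover the dual-side compactness this way. Your one-sentence remark that ``the unbounded-support case reduces to the bounded one by approximating $\mu$'' is not a proof: re-centring truncations of $\mu$ changes $\delta_0$ and you would have to control both the a~priori bound and the attainment argument uniformly in the approximation, then justify passing to the limit in the minimizer, none of which is carried out. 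Either work on the primal side for the compactness step as the paper does (Rockafellar Theorem~10.9 plus (\ref{eq_520}) gives pointwise convergence of $\psi_\ell$ on $K$ without any boundedness of $K$, and then Lemma~\ref{lem_1402} applies), or give an actual dual-side substitute for the local upper bound on $\phi_\ell$ that survives unbounded $\overline{K}$. As it stands, your proof is complete only for compactly supported $\mu$, which is strictly weaker than the stated theorem.
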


The remainder of this section is dedicated to the proof of Theorem \ref{cor_515}.
Let us fix a number $p > 0$ and a Borel probability measure $\mu$ satisfying the requirements
of Theorem \ref{cor_515}. For a $\mu$-integrable, proper convex function $\psi: \RR^n \rightarrow \RR \cup \{ + \infty \}$
with $\psi(0) < 0$ we denote
$$ \cI_{\mu, p}(\psi) = \int_{\RR^n} \psi d \mu + \cI_p^2(\psi) = \int_{\RR^n} \psi d \mu + \left( \int_{\RR^n} \frac{dx}{(\psi^*(x))^{n+p}} \right)^{-2/p}. $$
Since the barycenter of $\mu$ is at the origin, we learn from (\ref{eq_1021}) that
$\cI_{\mu, p}(\psi) = \cI_{\mu, p}(\psi_1)$
whenever $\psi_1(x) = \psi(x) + \langle x, v \rangle$ for some $v \in \RR^n$.
The first step in the proof of Theorem \ref{cor_515} is the following proposition:

\begin{proposition} Let $p > 0$ and let $\mu$ be as in Theorem \ref{cor_515}. Then,
$$  \inf_{\psi} \cI_{\mu, p}(\psi) > -\infty $$
where the infimum runs over all $\mu$-integrable, proper convex functions $\psi: \RR^n \rightarrow \RR \cup \{ + \infty \}$ with $\psi(0) < 0$.
\label{prop_515}
\end{proposition}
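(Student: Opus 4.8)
The plan is to show that $\cI_{\mu,p}(\psi)$ cannot go to $-\infty$ by extracting from each $\psi$ a normalized competitor and using the lower semi-continuity Lemma \ref{lem_1402} together with compactness. First I would exploit the affine invariance already noted: since $\cI_{\mu,p}(\psi)=\cI_{\mu,p}(\psi+\langle\cdot,v\rangle)$ for every $v\in\RR^n$, and adding a linear functional does not change $\cI_p$ (by (\ref{eq_1021})), I may translate the graph of $\psi^*$ so that, say, $\psi^*$ attains its minimum at a prescribed point, or equivalently $\psi$ is minimized at the origin, i.e.\ $0\in\partial\psi(0)$ up to a harmless shift; this lets me assume $\psi\ge\psi(0)$ and that $\psi(0)<0$ records $-\inf\psi=\psi^*(0)$. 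Next, because the origin lies in the interior of $K=\Relint\conv(\supp\mu)$ — indeed by hypothesis in the interior of $\conv(\supp(\mu))$ — there are linearly independent $v_1,\dots,v_n$ and their negatives lying in $\supp(\mu)$ with $\mu$ giving positive mass to neighborhoods of each $\pm v_i$; as in the proof of Lemma \ref{lem_1402} this forces $\psi^*(x)\ge c|x|$ minus a constant once $\psi$ is suitably bounded below on those points, which is the mechanism that keeps $\cI_p(\psi)$ from being too small while $\int\psi\,d\mu$ is large.

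The core of the argument is a dichotomy. Write $a=\psi(0)<0$, so $\cI_p^2(\psi)=(\psi^*(0)+\cdots)^{-2/p}$ is controlled above by a decreasing function of $-a$. If $-a=-\psi(0)$ stays bounded, then $\psi^*$ is bounded below by a fixed positive constant, and since $\psi\ge a$ everywhere we get $\int\psi\,d\mu\ge a\ge -C$, so $\cI_{\mu,p}(\psi)\ge -C$ and we are done in this regime. The delicate regime is $-a=-\psi(0)\to+\infty$, where the second term $\cI_p^2(\psi)$ tends to $0$ but the first term $\int\psi\,d\mu$ threatens to tend to $-\infty$. Here I would rescale: set $\bar\psi=\psi/|a|$ (or $\psi/(-a)$), so $\bar\psi(0)=-1$, use Lemma \ref{lem_1530}(i) to get $\cI_p(\bar\psi)=\cI_p(\psi)/|a|$, and then estimate
\[
\cI_{\mu,p}(\psi)=|a|\int_{\RR^n}\bar\psi\,d\mu+|a|^2\,\cI_p^2(\bar\psi).
\]
Now the family $\{\bar\psi\}$ consists of convex functions with $\bar\psi(0)=-1$; after the normalization $0\in\partial\bar\psi(0)$ they satisfy $\bar\psi\ge -1$, and one shows $\int\bar\psi\,d\mu$ is bounded below by a \emph{positive} constant $c_0>0$ unless $\cI_p(\bar\psi)$ is bounded below by a positive constant — this is precisely the trade-off coming from $0$ being interior to $\conv(\supp\mu)$: if $\bar\psi$ is very negative on a large region (making $\int\bar\psi\,d\mu$ small), then $\dom(\bar\psi)$ is large and $\psi^*$ grows slowly, so $\cI_p(\bar\psi)$ is bounded below; conversely if $\bar\psi$ blows up quickly then $\int\bar\psi\,d\mu$ is large. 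Quantifying this, $\cI_{\mu,p}(\psi)\ge |a|c_0+|a|^2 c_1^2 \ge -C'$ for a universal constant, since a convex combination of $|a|$ and $|a|^2$ with positive coefficients is bounded below over $|a|\ge 0$; if only the first alternative holds one instead gets $\cI_{\mu,p}(\psi)\ge |a|c_0>0$, and if only the second, $\cI_{\mu,p}(\psi)\ge |a|^2c_1^2>0$. Either way the infimum is finite.

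The step I expect to be the main obstacle is making the trade-off in the last paragraph quantitative and uniform — that is, proving a clean inequality of the form
\[
\int_{\RR^n}\bar\psi\,d\mu + \cI_p^2(\bar\psi)\ \ge\ c>0
\]
over all normalized convex $\bar\psi$ with $\bar\psi(0)=-1$, valid with a constant $c=c(\mu,p,n)$ depending only on how deeply the origin sits inside $\conv(\supp\mu)$ and on the mass $\mu$ assigns near the extreme points $\pm v_i$. The natural route is to bound $\cI_p(\bar\psi)=\bigl(\int(\bar\psi^*)^{-(n+p)}\bigr)^{-1/p}$ from below by $\eps+\delta\cdot(\text{something})$ using the estimate $\bar\psi^*(x)\ge \max_i(|\langle x,v_i\rangle|-\bar\psi(v_i)^-)+\cdots$ exactly as in (\ref{eq_1219}), while simultaneously bounding $\int\bar\psi\,d\mu$ from below by integrating the pointwise inequality $\bar\psi(y)\ge \langle x_0,y\rangle-\bar\psi^*(x_0)$ against $\mu$ for a well-chosen $x_0$ and using $\int y\,d\mu(y)=0$; this yields $\int\bar\psi\,d\mu\ge -\bar\psi^*(x_0)$ for every $x_0$, i.e.\ $\int\bar\psi\,d\mu\ge -\inf\bar\psi^*=\bar\psi(0)=-1$ trivially, so one needs instead a Jensen-type lower bound that sees the positivity — replacing the point evaluation by a supporting-hyperplane argument at a point where $\mu$ has mass, which is where the interiority hypothesis does the real work. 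Once that uniform inequality is in hand, the dichotomy above closes the proof; a lower-semicontinuity/compactness argument is not strictly needed for Proposition \ref{prop_515} itself (it will be needed afterwards to produce the minimizer in Theorem \ref{cor_515}), but Lemma \ref{lem_1402} can be invoked as a shortcut if one prefers to argue by contradiction along a minimizing sequence.
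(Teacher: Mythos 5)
Your proposal follows the same strategy as the paper's proof, and the substance is there: normalize so that $\psi$ is minimized at the origin; observe that $\int\psi\,d\mu\ge\inf\psi=-\alpha$ trivially; show that the interiority hypothesis forces $\cI_p^2(\psi)$ to be bounded below by a positive multiple of $\alpha^2$; conclude by completing the square. Your rescaling $\bar\psi=\psi/\alpha$ is exactly the paper's use of $\cI_p(\lambda\psi)=\lambda\cI_p(\psi)$ in disguise, so after unwrapping the two are the same argument. The paper implements it via three small lemmas: Lemma~\ref{lem_948} ($\int\langle x,\theta\rangle 1_{\{\langle x,\theta\rangle>c_1\}}d\mu\ge c_2$ for every direction $\theta$), Lemma~\ref{lem_1020} (if $\psi(0)=\inf\psi=-\alpha$ and $\int\psi\,d\mu<0$ then $\{\psi\le-\alpha/2\}\supseteq B(0,c)$), and Lemma~\ref{lem_406} (which turns this into $\psi^*\ge\alpha/2+c|\cdot|$ and hence $\cI_p^2(\psi)\ge\tilde c\alpha^2$). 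You have correctly identified that an estimate of type~(\ref{eq_1219}) is the engine.

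There are two concrete inaccuracies in your outline that would cause trouble if executed as written. First, the ``clean inequality'' you set up as the main obstacle, $\int\bar\psi\,d\mu+\cI_p^2(\bar\psi)\ge c>0$, is not the right statement and is not what you need: the two terms in $\cI_{\mu,p}(\psi)=\alpha\int\bar\psi\,d\mu+\alpha^2\cI_p^2(\bar\psi)$ carry different powers of $\alpha$, so no bound on their unweighted sum yields anything; moreover $\int\bar\psi\,d\mu$ can be close to $-1$ while $\cI_p^2(\bar\psi)$ is close to its lower bound $\tilde c$, and $\tilde c$ may well be less than $1$, so the proposed inequality need not hold. What you actually need, and what Lemma~\ref{lem_1020} delivers, is the one-sided statement: under the normalization $\bar\psi(0)=\inf\bar\psi=-1$, if $\int\bar\psi\,d\mu<0$ then $\cI_p^2(\bar\psi)\ge\tilde c>0$. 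Second, your final case analysis is off by a term: when ``only the second alternative holds'' you write $\cI_{\mu,p}(\psi)\ge\alpha^2 c_1^2>0$, but you dropped the contribution $\alpha\int\bar\psi\,d\mu$ which can be as negative as $-\alpha$; the correct conclusion is $\cI_{\mu,p}(\psi)\ge-\alpha+\tilde c\alpha^2\ge-1/(4\tilde c)$. Once you replace your target inequality by the one-sided lower bound on $\cI_p^2(\bar\psi)$ and finish with this complete-the-square step (which also makes the preliminary ``$-a$ bounded vs.\ unbounded'' dichotomy unnecessary), your proof matches the paper's.
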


The proof of Proposition \ref{prop_515} relies on several lemmas.

\begin{lemma} There exist $c_1, c_2 > 0$, depending on $\mu$, with the following property: For any $\theta \in S^{n-1}$,
$$ \int_{\RR^n} \langle x, \theta \rangle 1_{\{ \langle x, \theta \rangle > c_1 \}} d \mu(x) \geq c_2, $$
where $1_{\{ \langle x, \theta \rangle > c_1 \}}$ equals one when $\langle x, \theta \rangle > c_1$ and it vanishes elsewhere.
\label{lem_948}
\end{lemma}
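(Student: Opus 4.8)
The plan is to reduce to the (easier) threshold $c_1=0$ and then perturb. Define $g:S^{n-1}\to[0,\infty)$ by
$$ g(\theta) = \int_{\RR^n} \max\{\langle x,\theta\rangle,0\}\, d\mu(x). $$
Since the barycenter of $\mu$ is at the origin, $\int \langle x,\theta\rangle\, d\mu(x)=0$, so in fact $g(\theta)=\tfrac12\int|\langle x,\theta\rangle|\,d\mu(x)$. First I would check that $g$ is continuous: for $\theta,\theta'\in S^{n-1}$ one has $|\max\{\langle x,\theta\rangle,0\}-\max\{\langle x,\theta'\rangle,0\}|\le |x|\,|\theta-\theta'|$, and $\int|x|\,d\mu<\infty$ by hypothesis, so $g$ is even Lipschitz. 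Being continuous on the compact set $S^{n-1}$, $g$ attains a minimum $c_0:=\min_{\theta\in S^{n-1}}g(\theta)$.

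The key point is that $c_0>0$. Indeed, suppose $g(\theta)=0$ for some $\theta$. Since the integrand $\max\{\langle x,\theta\rangle,0\}$ is nonnegative, it must vanish $\mu$-almost everywhere, i.e.\ $\langle x,\theta\rangle\le 0$ for $\mu$-a.e.\ $x$; combined with $\int\langle x,\theta\rangle\,d\mu(x)=0$ this forces $\langle x,\theta\rangle=0$ for $\mu$-a.e.\ $x$. Hence the closed hyperplane $\theta^\perp$ has full $\mu$-measure, so $\supp(\mu)\subseteq\theta^\perp$, and therefore $\conv(\supp(\mu))\subseteq\theta^\perp$ has empty interior. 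This contradicts the assumption that the origin lies in the interior of $\conv(\supp(\mu))$. So $c_0>0$.

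Finally I would pass to a positive threshold. For any $c_1>0$ and any $\theta\in S^{n-1}$,
$$ g(\theta) = \int_{\RR^n} \langle x,\theta\rangle 1_{\{0<\langle x,\theta\rangle\le c_1\}}\, d\mu(x) + \int_{\RR^n}\langle x,\theta\rangle 1_{\{\langle x,\theta\rangle> c_1\}}\, d\mu(x), $$
and the first summand is at most $c_1\,\mu(\{0<\langle x,\theta\rangle\le c_1\})\le c_1$ because $\mu$ is a probability measure. Consequently
$$ \int_{\RR^n}\langle x,\theta\rangle 1_{\{\langle x,\theta\rangle> c_1\}}\, d\mu(x) \ \ge\ g(\theta)-c_1\ \ge\ c_0-c_1. $$
Choosing $c_1=c_0/2$ and $c_2=c_0/2$ yields the claim. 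There is no real obstacle here beyond making sure everything is uniform in $\theta$, which is exactly what the continuity of $g$ plus compactness of $S^{n-1}$ provides; the only mild subtlety is the use of $\mu(\RR^n)=1$ to bound the ``near-hyperplane'' contribution by $c_1$ uniformly in $\theta$.
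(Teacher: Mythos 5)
Your proof is correct and follows a genuinely different route from the paper's.

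The paper works directly with the family $f_t(\theta) = \int \langle x, \theta\rangle\, 1_{\{\langle x, \theta\rangle > t\}}\, d\mu$ for $t > 0$; these functions are only lower semi-continuous in $\theta$ (verified via Fatou), so the paper takes the minimizers $\theta_t$, passes to a subsequential limit as $t \to 0^+$, and invokes Fatou's lemma a second time for a diagonal sequence to see that some $m_{t_j}$ is strictly positive. You instead work only with $g = f_0$, which you observe is \emph{Lipschitz} on $S^{n-1}$ (using $\int |x|\, d\mu < \infty$), so its minimum $c_0$ is attained and is shown to be positive by the interior-of-support hypothesis. The passage from $t = 0$ to $t = c_1 > 0$ is then handled by the clean one-line bound $\int \langle x,\theta\rangle\, 1_{\{0 < \langle x,\theta\rangle \le c_1\}}\, d\mu \le c_1$, which is uniform in $\theta$ precisely because $\mu$ is a probability measure. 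This avoids the double Fatou / limit-along-a-subsequence argument entirely, and also yields an explicit constant ($c_1 = c_2 = c_0/2$), so it is somewhat more elementary and slightly stronger than the argument in the paper. One small remark: in your positivity argument for $c_0$ you do not actually need the barycenter condition; the fact that $g(\theta)=0$ already puts $\supp(\mu)$ in the closed half-space $\{\langle x,\theta\rangle \le 0\}$, so $\conv(\supp(\mu))$ cannot contain the origin in its interior.
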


\begin{proof} The origin belongs to the interior of $\conv(\supp(\mu))$. Therefore, for any $\theta \in S^{n-1}$,
\begin{equation}
\int_{\RR^n} \langle x, \theta \rangle 1_{\{ \langle x, \theta \rangle > 0 \}} d \mu(x) > 0.
\label{eq_1340}
\end{equation}
For $t > 0$  consider the non-negative function
$$ f_t(\theta) = \int_{\RR^n} \langle x, \theta \rangle 1_{\{ \langle x, \theta \rangle > t \}} d \mu(x) \qquad \qquad \qquad (\theta \in S^{n-1}). $$
We claim that $f_t$ is lower semi-continuous. Indeed, if $\theta_j \longrightarrow \theta$ then by Fatou's lemma,
$$ f_t(\theta) = \int_{\RR^n} \langle x, \theta \rangle 1_{\{ \langle x, \theta \rangle > t \}} d \mu(x) \leq
\liminf_{j \rightarrow \infty} \int_{\RR^n} \langle x, \theta_j \rangle 1_{\{ \langle x, \theta_j \rangle > t \}} d \mu(x) = \liminf_{j \rightarrow \infty} f_t(\theta_j). $$
Denote by $m_t$ the minimum of the function $f_t$ on $S^{n-1}$, and let $\theta_t \in S^{n-1}$
be a point such that $f_t(\theta_t) = m_t$. Since $S^{n-1}$ is compact, there exists a sequence $t_j \rightarrow 0^+$ such that $\theta_{t_j} \rightarrow \theta$ for a certain unit
vector $\theta \in S^{n-1}$. By (\ref{eq_1340}) and Fatou's lemma,
$$ 0 < \int_{\RR^n} \langle x, \theta \rangle 1_{\{ \langle x, \theta \rangle > 0 \}} d \mu(x) \leq
\liminf_{j \rightarrow \infty} \int_{\RR^n} \langle x, \theta_{t_j} \rangle 1_{\{ \langle x, \theta_j \rangle > t_j \}} d \mu(x) = \liminf_{j \rightarrow \infty} m_{t_j}. $$
Consequently there exists $j \geq 1$ such that $m_{t_j} > 0$. The lemma follows with $c_1 = t_j$ and $c_2 = m_{t_j}$.
\end{proof}

\begin{lemma} There exists $c > 0$, depending on $\mu$, with the following property:
Let $\psi: \RR^n \rightarrow \RR \cup \{ + \infty \}$ be a proper, convex function that is $\mu$-integrable.
Denote $\alpha = -\psi(0)$. Assume that $\psi(0) = \inf \psi$ and that $\int_{\RR^n} \psi d \mu < 0$.  Then for any $x \in \RR^n$,
$$ \psi(x) \leq -\alpha/2 \qquad \text{when} \ |x| < c. $$
\label{lem_1020}
\end{lemma}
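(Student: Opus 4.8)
The plan is to argue by contradiction. Let $c_1, c_2 > 0$ be the constants furnished by Lemma \ref{lem_948} and set $c := c_2/3$. Suppose that $\psi$ is a proper, convex, $\mu$-integrable function with $\psi(0) = \inf \psi$ and $\int_{\RR^n} \psi \, d\mu < 0$, yet $\psi(x_0) > -\alpha/2$ for some $x_0$ with $|x_0| < c$, where $\alpha := -\psi(0)$. First, $\alpha > 0$: otherwise $\psi \geq \psi(0) = -\alpha \geq 0$, forcing $\int \psi \, d\mu \geq 0$; in particular $x_0 \neq 0$.

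The next step is to reduce to the case of a function that is finite on all of $\RR^n$. I would put $\psi_k(x) := \inf_{y \in \RR^n} [ \psi(y) + k|x-y| ]$ for $k \geq 1$. Standard properties of this Lipschitz regularization give that each $\psi_k$ is convex and finite on $\RR^n$, that $-\alpha = \psi(0) \leq \psi_k \leq \psi$ (the left inequality since $\psi \geq \psi(0)$, the right by taking $y = x$), so $\psi_k(0) = \inf \psi_k = -\alpha$ and $|\psi_k| \leq \alpha + k|\cdot|$, whence $\psi_k$ is $\mu$-integrable; and that $\psi_k \uparrow \psi$ pointwise as $k \to \infty$, where the lower semi-continuity of $\psi$ is used. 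Hence $\int \psi_k \, d\mu \leq \int \psi \, d\mu < 0$, and $\psi_k(x_0) \to \psi(x_0) > -\alpha/2$, so for $k$ large enough $\psi_k(x_0) > -\alpha/2$. Fixing such a $k$, it suffices to derive a contradiction from the existence of a convex $\varphi := \psi_k$ that is finite on $\RR^n$, is $\mu$-integrable, satisfies $\varphi(0) = \inf \varphi = -\alpha$ and $\int \varphi \, d\mu < 0$, and has $\varphi(x_0) > -\alpha/2$ with $|x_0| < c$.

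Now the core estimate. Since $\varphi$ is finite on $\RR^n$ it is subdifferentiable at $x_0$; pick $v$ in its subdifferential there. The subgradient inequality at the points $x_0$ and $0$ gives $\langle v, x_0 \rangle \geq \varphi(x_0) - \varphi(0) = \varphi(x_0) + \alpha > \alpha/2$, hence $|v| \geq \langle v, x_0 \rangle / |x_0| > \alpha/(2c)$. Write $\eta = v/|v| \in S^{n-1}$ and $b = \langle v, x_0 \rangle - \varphi(x_0) \geq \alpha$, so that $\varphi(x) \geq \varphi(x_0) + \langle v, x - x_0 \rangle = |v| \langle x, \eta \rangle - b$ for every $x$. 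Combining this with $\varphi \geq -\alpha$ gives
$$ \varphi(x) \; \geq \; \max \left( -\alpha, \ |v| \langle x, \eta \rangle - b \right) \; = \; -\alpha + |v| \left( \langle x, \eta \rangle - \beta \right)_+ , $$
where $\beta := (b-\alpha)/|v| = \langle x_0, \eta \rangle - (\varphi(x_0)+\alpha)/|v|$; since $\varphi(x_0) + \alpha > 0$ we have $0 \leq \beta < \langle x_0, \eta \rangle \leq |x_0| < c$. Finally I integrate: writing $\langle x, \eta\rangle_+ := \max\{\langle x,\eta\rangle, 0\}$, the elementary bound $(\langle x,\eta\rangle - \beta)_+ \geq \langle x, \eta\rangle_+ - \beta$ together with Lemma \ref{lem_948} applied with $\theta = \eta$, which gives $\int_{\RR^n} \langle x, \eta \rangle_+ \, d\mu \geq \int_{\RR^n} \langle x, \eta \rangle 1_{\{ \langle x, \eta \rangle > c_1 \}} \, d\mu \geq c_2$, yields
$$ \int_{\RR^n} \varphi \, d\mu \; \geq \; -\alpha + |v| \left( c_2 - \beta \right) \; > \; -\alpha + \frac{\alpha}{2c} \left( c_2 - c \right) \; \geq \; -\alpha + \alpha \; = \; 0 , $$
the last inequality using $c = c_2/3$, i.e. $c_2 - c \geq 2c$. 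This contradicts $\int \varphi \, d\mu < 0$ and finishes the proof.

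I expect two points to require care. The first is the regularization step: one must check that $\psi_k(0)$ equals $\psi(0)$ exactly (so $\alpha$ is unchanged) and that $\psi_k \uparrow \psi$, both of which rest on $\psi \geq \psi(0)$ and on lower semi-continuity. The second, and the real heart of the matter, is the observation that the bare affine lower bound $\varphi(x) \geq |v|\langle x,\eta\rangle - b$ is useless on its own — it integrates to a number $\leq -\alpha$, because an affine function integrates to its value at the barycenter, which is $\leq -\alpha$ here — so one is forced to take the maximum with the constant $-\alpha$ and feed the result to Lemma \ref{lem_948}. The final constant-chasing is routine once $c$ is taken small enough in terms of $c_2$.
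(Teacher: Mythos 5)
Your proof is correct and is driven by the same strategy as the paper's: argue by contradiction, produce a unit vector (your $\eta$, the paper's $\theta$) along which $\psi$ grows linearly from its minimum $-\alpha$ starting within distance $c$ of the origin, and integrate that lower bound against $\mu$ using Lemma \ref{lem_948} to force $\int\psi\,d\mu \geq 0$. The technical realization differs in two minor ways. First, the paper separates the sublevel set $A = \{\psi \leq -\alpha/2\}$ --- convex, closed, containing $0$, and not containing $B(0,c)$ if the conclusion fails --- from a nearby point outside it by a hyperplane, and reads off the linear lower bound from one-dimensional convexity along rays through the origin; you instead extract a subgradient at the witness $x_0$. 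Because $\psi$ may be $+\infty$ or non-subdifferentiable at $x_0$, you must first regularize by inf-convolution with $k|\cdot|$. That step is carried out correctly (and the verifications you flag --- $\psi_k(0) = \psi(0)$, $\inf\psi_k = \psi_k(0)$, $\psi_k\uparrow\psi$, $\mu$-integrability from $\int|x|\,d\mu<\infty$ --- are all valid), but it is avoidable: the level-set argument never needs $\psi$ to be finite at any particular point outside $\dom(\psi)$. Second, because your pointwise bound $\psi \geq -\alpha + |v|(\langle\cdot,\eta\rangle - \beta)_+$ holds on all of $\RR^n$, you need not split the integral at $\langle x,\theta\rangle = c_1$ as the paper does, and your constant $c = c_2/3$ depends only on $c_2$, whereas the paper takes $c = \min\{c_1, c_2/4\}$. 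Both proofs rest on Lemma \ref{lem_948} alone and the remaining computations are of comparable weight.
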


\begin{proof} We will prove the lemma with $c = \min \{ c_1, c_2/4 \}$ where $c_1, c_2$ are the positive constants from Lemma \ref{lem_948}.
Assume by contradiction that the conclusion of the lemma fails. Then the convex set  $A = \{ x \in \RR^n \, ; \, \psi(x) \leq -\alpha/2 \}$
does not contain an open ball of radius $c$ around the origin. By the convexity of $A$, there exists $\theta \in S^{n-1}$ such that $\langle x, \theta \rangle
< c$ for all $x \in A$. By the convexity of the function $\psi$, for any $x \in \RR^n$ with $\langle x, \theta \rangle \geq c$,
$$ -\frac{\alpha}{2} < \psi \left( \frac{c x}{\langle x, \theta \rangle} \right) \leq \frac{c}{\langle x, \theta \rangle} \psi(x)
+ \left(1 - \frac{c}{\langle x, \theta \rangle} \right) \psi(0) = \frac{c}{\langle x, \theta \rangle} \psi(x) - \alpha \cdot \left(1 - \frac{c}{\langle x, \theta \rangle} \right).
$$
Consequently, $\psi(x) \geq \alpha \langle x, \theta \rangle / (2c) - \alpha$ for any $x \in \RR^n$ with $\langle x, \theta \rangle \geq c$.
Since $\inf \psi = -\alpha$ and $c \leq c_1$, then by Lemma \ref{lem_948},
\begin{align*} \int_{\RR^n} & \psi d \mu  = \int_{\RR^n} \psi(x) 1_{\{ \langle x, \theta \rangle \leq c_1 \}} d \mu(x) +
\int_{\RR^n} \psi(x) 1_{\{ \langle x, \theta \rangle > c_1 \}} d \mu(x) \\ & \geq -\alpha + \int_{\RR^n}
\left[ \frac{\alpha}{2c} \cdot \langle x, \theta \rangle - \alpha \right] \cdot 1_{\{ \langle x, \theta \rangle > c_1 \}} d \mu(x) \geq -2\alpha + \frac{\alpha}{2c} \cdot c_2
\geq -2 \alpha + 2 \alpha = 0, \end{align*}
in contradiction to our assumption that $\int_{\RR^n} \psi d \mu < 0$.
\end{proof}

\begin{lemma} There exists $\tilde{c} > 0$, depending on $\mu$ and $p$, with the following property:
Let $\psi: \RR^n \rightarrow \RR \cup \{ + \infty \}$ be a proper, convex function that is $\mu$-integrable.
Denote $\alpha = -\psi(0)$. Assume that $\psi(0) = \inf \psi$ and that $\int_{\RR^n} \psi d \mu < 0$. Then,
$$ \cI_{\mu, p}(\psi) \geq -\alpha + \tilde{c} \alpha^2. $$
\label{lem_406}
\end{lemma}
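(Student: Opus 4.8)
The plan is to estimate the two summands of $\cI_{\mu, p}(\psi) = \int_{\RR^n} \psi \, d\mu + \cI_p^2(\psi)$ separately. The first summand is handled trivially: since $\mu$ is a probability measure and, by hypothesis, $\psi(x) \geq \inf \psi = \psi(0) = -\alpha$ for every $x \in \RR^n$, we get $\int_{\RR^n} \psi \, d\mu \geq -\alpha$. So all of the quadratic gain $\tilde{c} \alpha^2$ has to be extracted from the term $\cI_p^2(\psi)$.

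To bound $\cI_p(\psi)$ from below I would invoke Lemma \ref{lem_1020}: under the present assumptions there is a constant $c > 0$, depending only on $\mu$, such that $\psi(y) \leq -\alpha/2$ whenever $|y| < c$. Passing to the Legendre transform, for every $x \in \RR^n$,
$$ \psi^*(x) = \sup_{y} \left[ \langle x, y \rangle - \psi(y) \right] \geq \sup_{|y| < c} \left[ \langle x, y \rangle + \frac{\alpha}{2} \right] = c |x| + \frac{\alpha}{2}. $$
Hence
$$ \int_{\RR^n} \frac{dx}{(\psi^*(x))^{n+p}} \leq \int_{\RR^n} \frac{dx}{(c|x| + \alpha/2)^{n+p}}, $$
and the integral on the right I would evaluate by passing to polar coordinates and rescaling $x \mapsto (\alpha/(2c)) x$, obtaining a value of the form $C_0 \alpha^{-p}$, where $C_0 = C_0(n, p, c)$ is a finite constant; its finiteness is exactly where $p > 0$ is used, through the convergence of the Beta-type integral $\int_0^\infty t^{n-1} (1+t)^{-(n+p)} \, dt < \infty$. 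Therefore $\cI_p(\psi) \geq C_0^{-1/p} \alpha$, and so $\cI_p^2(\psi) \geq C_0^{-2/p} \alpha^2$.

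Combining the two estimates yields $\cI_{\mu, p}(\psi) \geq -\alpha + C_0^{-2/p} \alpha^2$, which is the assertion of the lemma with $\tilde{c} = C_0^{-2/p}$; note that $C_0$, and hence $\tilde{c}$, depends only on $\mu$ (through the constant $c$ of Lemma \ref{lem_1020}) and on $p$, but not on $\psi$ or $\alpha$.

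Every step here is an elementary estimate, so I do not anticipate a genuine obstacle. The only points demanding a little care are: (a) checking that the lower bound $\psi^*(x) \geq c|x| + \alpha/2$ really does follow from Lemma \ref{lem_1020} — this uses $\sup_{|y| < c} \langle x, y \rangle = c|x|$ and the fact that $0 \in \dom(\psi)$; and (b) bookkeeping the constant $C_0$ so that one sees clearly both its finiteness (a consequence of $p > 0$) and its dependence only on the data $\mu, p$.
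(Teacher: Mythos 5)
Your proposal is correct and follows essentially the same route as the paper: both invoke Lemma \ref{lem_1020} to obtain $\psi^*(y) \geq \alpha/2 + c|y|$, bound $\int \psi\,d\mu \geq -\alpha$ from $\inf\psi = -\alpha$, and extract the factor $\alpha^2$ from $\cI_p^2(\psi)$ by a scaling of the integral $\int (\alpha/2 + c|y|)^{-(n+p)}\,dy$.
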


\begin{proof}
From Lemma \ref{lem_1020}, for any $y \in \RR^n$,
$$ \psi^*(y) = \sup_{x \in \dom(\psi)} \left[ \langle x, y \rangle - \psi(x) \right] \geq
\sup_{x \in \RR^n, |x| < c} \left[ \langle x, y \rangle + \alpha/2 \right] = \frac{\alpha}{2} + c |y|. $$
Since $\inf \psi = -\alpha$, we deduce that
\begin{align*} \cI_{\mu, p}(\psi) & = \int_{\RR^n} \psi d \mu + \left( \int_{\RR^n} \frac{dy}{(\psi^*(y))^{n+p}} \right)^{-2/p} \nonumber
\geq -\alpha + \left( \int_{\RR^n} \frac{dy}{(\alpha/2 + c |y| )^{n+p}} \right)^{-2/p} \\ & = -\alpha + \alpha^{2}
\left( \int_{\RR^n} \frac{dy}{(1/2 + c |y| )^{n+p}} \right)^{-2/p} = -\alpha + \tilde{c} \alpha^{2}. \tag*{\qedhere}
\end{align*}
\end{proof}

\begin{lemma} Assume that $\psi: \RR^n \rightarrow \RR \cup \{ + \infty \}$ is a $\mu$-integrable, convex function.
Then $\dom(\psi)$ contains the interior of $\conv(\supp(\mu))$. In particular, $\dom(\psi)$ contains
the origin in its interior. \label{lem_1340}
\end{lemma}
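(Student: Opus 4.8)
The plan is to argue by contradiction: if some point $x_0$ in the interior of $\conv(\supp(\mu))$ were not in $\dom(\psi)$, then $\psi \equiv +\infty$ on a neighborhood of $x_0$ (since $\dom(\psi)$ is convex and $x_0$ is in its would-be interior only if $\psi$ is finite there), and I want to contradict $\mu$-integrability of $\psi$. First I would recall the elementary fact that for a convex function $\psi$, the domain $\dom(\psi)$ is a convex set, and that if $x_0 \in \RR^n \setminus \dom(\psi)$ then, by the separation theorem, there is a closed half-space $\{x : \langle x, \theta \rangle \ge \langle x_0, \theta \rangle\}$ on which $\psi \equiv +\infty$ — more precisely, $\psi(x) = +\infty$ whenever $\langle x, \theta\rangle > \langle x_0,\theta\rangle$, because the segment from any finite point of $\dom(\psi)$ to such an $x$ would have to pass through a region where convexity forces $+\infty$ to be exceeded. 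Actually the cleanest route: $\dom(\psi)$ is convex, so if $x_0 \notin \overline{\dom(\psi)}$ there is $\theta$ with $\sup_{x \in \dom(\psi)} \langle x,\theta\rangle < \langle x_0, \theta\rangle =: s$, and then $\psi = +\infty$ on the open half-space $\{\langle x,\theta\rangle > s'\}$ for suitable $s' < s$; if merely $x_0 \in \overline{\dom(\psi)} \setminus \dom(\psi)$, a supporting hyperplane to $\dom(\psi)$ at $x_0$ does the job, giving $\psi = +\infty$ on $\{\langle x, \theta\rangle > \langle x_0,\theta\rangle\}$.

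Next, the point is that when $x_0$ lies in the \emph{interior} of $\conv(\supp(\mu))$, the relevant open half-space $\{x : \langle x,\theta\rangle > \langle x_0,\theta\rangle\}$ must have positive $\mu$-measure. Indeed, if it had zero $\mu$-measure, then $\supp(\mu)$ would be contained in the closed half-space $\{\langle x,\theta\rangle \le \langle x_0,\theta\rangle\}$, hence so would $\conv(\supp(\mu))$, which would force $x_0$ to lie on the boundary of $\conv(\supp(\mu))$ rather than in its interior — a contradiction. So $\mu(\{\langle x,\theta\rangle > \langle x_0,\theta\rangle\}) > 0$, and on this set of positive measure $\psi \equiv +\infty$, so $\int_{\RR^n} \psi\, d\mu = +\infty$, contradicting the hypothesis that $\psi$ is $\mu$-integrable. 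This establishes that the interior of $\conv(\supp(\mu))$ is contained in $\dom(\psi)$. The ``in particular'' clause is immediate: by hypothesis in Theorem \ref{cor_515} the origin belongs to the interior of $\conv(\supp(\mu))$, which we have just shown is an open subset of $\dom(\psi)$, so $\dom(\psi)$ contains the origin in its interior.

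I do not expect any serious obstacle here — the proof is a routine application of the separation theorem together with the definition of the support of a measure. The only point requiring a little care is the precise statement that $x_0 \notin \dom(\psi)$ forces $\psi$ to be $+\infty$ on a whole open half-space bounded by a hyperplane through (or arbitrarily near) $x_0$, rather than just at the single point $x_0$; this uses convexity of $\psi$ (equivalently, convexity of $\dom(\psi)$) in the way indicated above, and is where one must distinguish the cases $x_0 \notin \overline{\dom(\psi)}$ and $x_0 \in \overline{\dom(\psi)} \setminus \dom(\psi)$. Everything else is bookkeeping with half-spaces and the positivity of $\mu$ on open sets meeting the interior of $\conv(\supp(\mu))$.
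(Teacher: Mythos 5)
Your argument is correct and is essentially the same as the paper's: both proceed by contradiction, using a separating (or supporting) hyperplane for the convex set $\dom(\psi)$ to produce an open half-space on which $\psi\equiv+\infty$, and then noting that this half-space must carry positive $\mu$-mass because its closed complement would otherwise contain $\supp(\mu)$ and hence $\conv(\supp(\mu))$, forcing $x_0$ onto the boundary. The paper compresses this into one sentence (``separate $\dom(\psi)$ from an open ball intersecting $\supp(\mu)$''); you have simply spelled out the same idea with the two cases $x_0\notin\overline{\dom(\psi)}$ and $x_0\in\overline{\dom(\psi)}\setminus\dom(\psi)$ made explicit.
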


\begin{proof} Otherwise, we could use a hyperplane and separate the convex set $\dom(\psi)$ from an open ball intersecting $\supp(\mu)$.
This would imply that $\psi$ is not $\mu$-integrable, in contradiction.
\end{proof}

\begin{proof}[Proof of Proposition \ref{prop_515}]
Let $\psi: \RR^n \rightarrow \RR \cup \{ + \infty \}$ be a proper, convex function with $\psi(0) < 0$ that is $\mu$-integrable.
We will show that
\begin{equation}  \cI_{\mu, p}(\psi) \geq - \frac{1}{4 \tilde{c}} \label{eq_427}
\end{equation}
where $\tilde{c} > 0$ is the constant from Lemma \ref{lem_406}.
 In the case where $\int \psi d \mu \geq 0$ we have $ \cI_{\mu, p}(\psi) \geq 0$, and (\ref{eq_427}) trivially holds.
We may thus assume that
\begin{equation} \int_{\RR^n} \psi d \mu < 0. \label{eq_1631}
\end{equation}
The origin is in the interior
of $\dom(\psi)$, according to Lemma \ref{lem_1340}. From Rockafellar \cite[Theorem 23.4]{roc}  we learn that there exists $w \in \RR^n$ such that
\begin{equation} \psi(x) \geq \psi(0) + \langle x, w \rangle \qquad \qquad \qquad (x \in \RR^n). \label{eq_922} \end{equation}
Recall that $\cI_{\mu, p}(\psi) = \cI_{\mu, p}(\psi_1)$
whenever $\psi_1(x) = \psi(x) + \langle x, v \rangle$ for some $v \in \RR^n$.
By adding an appropriate linear functional to $\psi$,
we may assume that $w = 0$ in (\ref{eq_922}) and hence $\psi(0) = \inf \psi$.
Denote $\alpha = -\psi(0)$, which is a positive number, as follows from (\ref{eq_1631}). We may now apply Lemma \ref{lem_406} and obtain that
\begin{equation*} \cI_{\mu, p}(\psi) \geq -\alpha + \tilde{c} \alpha^2 \geq -\frac{1}{4 \tilde{c}}, \end{equation*}
completing the proof of (\ref{eq_427}). The proposition is thus proven.
\end{proof}

The next proposition is the second step in the proof of Theorem \ref{cor_515}.

\begin{proposition} The infimum in Proposition \ref{prop_515} is attained.
\label{prop_340}
\end{proposition}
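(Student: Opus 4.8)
The plan is to run the direct method: produce a minimizing sequence, normalize it, extract a limit by a compactness argument for convex functions, and verify lower semicontinuity of the two terms of $\cI_{\mu, p}$ with the help of Lemma \ref{lem_1402}.

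First I would record that the infimum $c_0 := \inf_\psi \cI_{\mu, p}(\psi)$ is \emph{negative}. Set $m = \int_{\RR^n} |x| \, d\mu(x)$, which is positive because the origin lies in the interior of $\conv(\supp \mu)$. For $t > 0$ let $\psi_0(x) = t|x| - 2tm$; this is a proper, convex, $\mu$-integrable function with $\psi_0(0) = -2tm < 0$. A direct computation gives $\psi_0^*(y) = 2tm$ for $|y| \leq t$ and $\psi_0^*(y) = +\infty$ otherwise, so that $\cI_{\mu, p}(\psi_0) = -tm + C' t^2$ for some constant $C' = C'(n, p, \mu) > 0$; this is negative once $t < m / C'$. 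Hence $c_0 < 0$, and $c_0 > -\infty$ by Proposition \ref{prop_515}.

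Next, let $\psi_\ell$ be a minimizing sequence. Because $\cI_{\mu, p}$ is invariant under adding a linear functional to its argument --- this uses (\ref{eq_1021}) and the fact that $\mu$ has its barycenter at the origin --- and because $0$ lies in the interior of $\dom(\psi_\ell)$ by Lemma \ref{lem_1340}, I may subtract from $\psi_\ell$ a subgradient at $0$ (Rockafellar \cite{roc}) and assume $\psi_\ell(0) = \inf \psi_\ell$ for all $\ell$, without changing $\cI_{\mu, p}(\psi_\ell)$ and without destroying $\mu$-integrability. Discarding finitely many terms we have $\cI_{\mu, p}(\psi_\ell) < c_0 / 2 < 0$, hence $\int \psi_\ell \, d\mu \leq \cI_{\mu, p}(\psi_\ell) < 0$, so Lemmas \ref{lem_1020} and \ref{lem_406} are available. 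Writing $\alpha_\ell = -\psi_\ell(0) > 0$: by Jensen, $-\alpha_\ell = \psi_\ell(0) \leq \int \psi_\ell \, d\mu < c_0 / 2$, so $\alpha_\ell \geq -c_0 / 2 =: \alpha_* > 0$; and by Lemma \ref{lem_406}, $-\alpha_\ell + \tilde{c} \alpha_\ell^2 \leq \cI_{\mu, p}(\psi_\ell)$, which is bounded, so $\alpha_\ell \leq C_1$ for some constant $C_1$. Consequently $\psi_\ell \geq -\alpha_\ell \geq -C_1$ everywhere, while $\psi_\ell \leq -\alpha_\ell / 2 \leq -\alpha_* / 2$ on the ball $\{|x| < c\}$ by Lemma \ref{lem_1020}. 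Also $0 \leq \cI_p^2(\psi_\ell) = \cI_{\mu, p}(\psi_\ell) - \int \psi_\ell \, d\mu \leq c_0 + 1 + C_1$, so $\cI_p(\psi_\ell)$ is bounded as well.

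Now the compactness step. The $\psi_\ell$ are convex functions, uniformly bounded below by $-C_1$, so a subsequence epi-converges to a proper, lower semicontinuous, convex function $\psi$: this is the standard sequential compactness of epi-convergence, the epi-limit of convex functions is convex, it is never $-\infty$ since the recovery-sequence half of epi-convergence gives $\psi \geq -C_1$, and it is not $\equiv +\infty$ since $\psi(0) \leq \liminf_\ell \psi_\ell(0) \leq -\alpha_*$. The pointwise-liminf half gives $\psi(x) \leq \liminf_\ell \psi_\ell(x)$ for every $x$; in particular $\psi(0) \leq -\alpha_* < 0$ and $\psi \leq -\alpha_* / 2$ on $\{|x| < c\}$, so $0$ lies in the interior of $\dom(\psi)$. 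For convex functions, epi-convergence moreover implies $\psi_\ell \to \psi$ pointwise --- indeed locally uniformly --- on the interior of $\dom(\psi)$. Therefore: by Fatou applied to $\psi_\ell + C_1 \geq 0$, $\int \psi \, d\mu \leq \liminf_\ell \int \psi_\ell \, d\mu$, and the right-hand side equals $\liminf_\ell (\cI_{\mu, p}(\psi_\ell) - \cI_p^2(\psi_\ell))$, which is finite by the bounds above, so together with $\psi \geq -C_1$ this shows $\psi$ is $\mu$-integrable; and by Lemma \ref{lem_1402}, applied with its ``$K$'' taken to be the (convex, open, origin-containing) interior of $\dom(\psi)$ --- note $\dom(\psi)$ is sandwiched between it and its closure --- we get $\cI_p(\psi) \leq \liminf_\ell \cI_p(\psi_\ell)$, hence $\cI_p^2(\psi) \leq \liminf_\ell \cI_p^2(\psi_\ell)$. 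Adding the two estimates, $\cI_{\mu, p}(\psi) \leq \liminf_\ell \int \psi_\ell \, d\mu + \liminf_\ell \cI_p^2(\psi_\ell) \leq \liminf_\ell \cI_{\mu, p}(\psi_\ell) = c_0$. Since $\psi$ is a proper, $\mu$-integrable, convex function with $\psi(0) < 0$, also $\cI_{\mu, p}(\psi) \geq c_0$; hence $\cI_{\mu, p}(\psi) = c_0$ and the infimum is attained.

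I expect the compactness step to be the crux. When $\supp(\mu)$ is unbounded, every competitor has unbounded domain (its domain contains the interior of $\conv(\supp \mu)$, by Lemma \ref{lem_1340}), these domains can vary with $\ell$ in an uncontrolled way, and $\psi_\ell$ need not be bounded above on compact subsets of that set, so a naive Arzel\`a--Ascoli argument for pointwise convergence is unavailable. Epi-convergence is the notion that survives: it needs nothing beyond ``convex and bounded below'', the epi-limit carries the right domain, and for convex functions it upgrades to local uniform convergence on the interior of the limit domain --- exactly the hypothesis of Lemma \ref{lem_1402}. The second delicate point is keeping $0$ strictly inside $\dom(\psi)$ with $\psi(0) < 0$; this is why $c_0 < 0$ must be established first, as it is what forces, via $\int \psi_\ell \, d\mu < 0$ together with Lemmas \ref{lem_1020} and \ref{lem_406}, the uniform two-sided bound on $\psi_\ell$ near the origin that then passes to the limit.
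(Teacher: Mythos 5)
Your proposal is correct, and its compactness step is genuinely different from the paper's. The paper extracts a pointwise-convergent subsequence on $K = \mathrm{int}\,\conv(\supp\mu)$ by first invoking \cite[Lemma 16]{CK} -- which bounds the value of a non-negative $\mu$-integrable convex function at each $x \in K$ by a multiple of its $\mu$-integral -- so that $\sup_\ell |\psi_\ell(x_0)| < \infty$ at every $x_0 \in K$, and then applies \cite[Theorem~10.9]{roc} (local equi-Lipschitzness plus Arzel\`a--Ascoli for convex functions). You replace this entirely by the sequential compactness of epi-convergence for closed convex epigraphs, keeping the limit proper by the two-sided bound on $\psi_\ell$ near the origin (properness from below by the uniform $\alpha_\ell \le C_1$, non-triviality from the Lemma~\ref{lem_1020} bound $\psi_\ell \le -\alpha_*/2$ on a small ball), and you then invoke the standard upgrade from epi-convergence to local uniform convergence on $\mathrm{int}\,\dom(\psi)$ before feeding Lemma~\ref{lem_1402}. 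Both routes are sound; yours trades the concrete estimate \cite[Lemma 16]{CK} for the more abstract epi-convergence machinery and so avoids needing any a priori pointwise \emph{upper} bound on $\psi_\ell$ over $K$, which is an appealing structural simplification. One small caveat worth noting: your epi-limit $\psi$ need not satisfy $\dom(\psi) \subseteq \overline{K}$, whereas the paper's construction produces a minimizer with this containment automatically, and this is used for the ``moreover'' clause of Theorem~\ref{cor_515}. This is easily repaired -- replacing your minimizer by its restriction to $\overline{K}$ (set to $+\infty$ outside) can only increase $\psi$, hence decrease $\psi^*$, hence decrease $\cI_p$, while $\int \psi\,d\mu$ is unchanged as $\mu$ lives on $\overline{K}$ -- so the restricted function is still a minimizer. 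But it is worth making that replacement explicit if you intend your argument to feed all the conclusions of Theorem~\ref{cor_515} and not just Proposition~\ref{prop_340}.
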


Again, the proof of Proposition \ref{prop_340} relies on a few small lemmas.

\begin{lemma} There exists a $\mu$-integrable, proper convex function $\psi: \RR^n \rightarrow \RR \cup \{ + \infty \}$ with $\psi(0) < 0$ such that
$\cI_{\mu, p}(\psi) < 0$. \label{lem_351}
\end{lemma}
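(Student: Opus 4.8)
The plan is to exhibit one simple explicit function and then rescale it. The first thing to notice is that a negative constant $\psi \equiv -a$ is useless here: for such $\psi$ one has $\psi^*(0) = a$ and $\psi^*(y) = +\infty$ for every $y \neq 0$, so $\int_{\RR^n} dx/(\psi^*(x))^{n+p} = 0$ and $\cI_p^2(\psi) = +\infty$. To make $\cI_p(\psi)$ finite one needs $\psi^*$ to be finite on a set of positive Lebesgue measure, i.e. $\dom(\psi^*)$ should have nonempty interior, and this forces $\psi$ to grow at least linearly at infinity. The simplest convex function with this property whose value at the origin is negative is a shifted cone.

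Concretely, I would set $M = \int_{\RR^n} |x| \, d\mu(x)$. This is a positive real number: it is finite by hypothesis, and it cannot be zero, since $M = 0$ would force $\mu = \delta_0$ and then $\conv(\supp(\mu)) = \{0\}$ would have empty interior, contrary to assumption. Define
\[
\psi_0(x) = |x| - 2M \qquad (x \in \RR^n).
\]
Then $\psi_0$ is finite-valued, convex and continuous, hence proper; it is $\mu$-integrable, with $\int_{\RR^n} \psi_0 \, d\mu = M - 2M = -M < 0$; and $\psi_0(0) = -2M < 0$. A direct computation of the Legendre transform yields $\psi_0^*(y) = 2M$ for $|y| \le 1$ and $\psi_0^*(y) = +\infty$ for $|y| > 1$, so that, with $\omega_n$ denoting the volume of the Euclidean unit ball,
\[
\int_{\RR^n} \frac{dx}{(\psi_0^*(x))^{n+p}} = \frac{\omega_n}{(2M)^{n+p}} \in (0, +\infty).
\]
Hence $C := \cI_p^2(\psi_0) = \bigl( \omega_n (2M)^{-(n+p)} \bigr)^{-2/p}$ is a finite positive number.

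To finish, I would invoke the homogeneity in Lemma \ref{lem_1530}(i). For any $\lambda > 0$ the function $\psi := \lambda \psi_0$ is proper, convex and $\mu$-integrable with $\psi(0) < 0$, and since $\cI_p(\lambda \psi_0) = \lambda \cI_p(\psi_0)$ and the map $\psi \mapsto \int \psi \, d\mu$ is linear,
\[
\cI_{\mu, p}(\lambda \psi_0) = \lambda \int_{\RR^n} \psi_0 \, d\mu + \lambda^2 \cI_p^2(\psi_0) = -\lambda M + \lambda^2 C = \lambda ( \lambda C - M ).
\]
Choosing any $\lambda \in (0, M/C)$ makes this quantity strictly negative, and $\psi = \lambda \psi_0$ is then the required function.

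There is no real obstacle in this lemma; the only things to get right are the point already highlighted — that $\int_{\RR^n} dx/(\psi^*(x))^{n+p}$ must be strictly positive and finite, which is why one cannot use a bounded $\psi$ and must use the linearly growing cone $\psi_0$ — together with the routine verification that $M > 0$, which is what guarantees both $\psi_0(0) < 0$ and $\int_{\RR^n} \psi_0 \, d\mu < 0$.
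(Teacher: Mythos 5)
Your argument is correct and takes essentially the same approach as the paper: both proofs test the functional on a shifted cone $\eps|x|-\delta$, whose Legendre transform is a constant on a small ball and $+\infty$ outside. The paper reaches the conclusion by choosing $\eps=\delta^{1+p/(4n)}$ and letting $\delta\to 0^+$, while you fix one cone $\psi_0$ with $\int\psi_0\,d\mu<0$ and $\cI_p^2(\psi_0)<\infty$ and then exploit the $1$-homogeneity of $\cI_p$ from Lemma \ref{lem_1530}(i), so that $\cI_{\mu,p}(\lambda\psi_0)=-\lambda M+\lambda^2 C$ is negative for small $\lambda$; this is a slightly more transparent way to phrase the same scaling argument.
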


\begin{proof} Let $\delta > 0$ and denote $\psi_{\delta}(x) = -\delta + \eps |x|$ for $\eps = \delta^{1 + p / (4n)}$. Then,
$$ \left( \int_{\RR^n} \frac{dx}{(\psi_{\delta}^*(x))^{n+p}} \right)^{-2/p} = \left( \int_{B(0, \eps)} \frac{dx}{\delta^{n+p}} \right)^{-2/p} = A \delta^{3/2}
$$
where $B(0, \eps) = \{ x \in \RR^n \, ; \, |x| < \eps \}$ and $A = \Vol_n(B(0,1))^{-2/p} > 0$.
Consequently,
$$ \cI_{\mu,p}(\psi_{\delta}) = A \delta^{3/2} + \int_{\RR^n} (-\delta + \eps |x|) d \mu(x) = A \delta^{3/2} -\delta + \delta^{1 + p / (4n)} \cdot \int_{\RR^n} |x| d \mu(x). $$
By our assumptions on the measure $\mu$, we know that $\int |x| d \mu(x)< \infty$. For a small, positive $\delta$, the leading term in $\cI_{\mu,p}(\psi_{\delta})$ is $-\delta$. Consequently, $\cI_{\mu,p}(\psi_{\delta}) < 0$
for a sufficiently small $\delta > 0$.
\end{proof}

In order to prove Proposition \ref{prop_340}, we select a minimizing sequence $$ \{ \psi_{\ell} \}_{\ell=1,2,\ldots, \infty}. $$
In other words, for any $\ell \geq 1$ the function $\psi_{\ell}: \RR^n \rightarrow \RR \cup \{ + \infty \}$ is a $\mu$-integrable, proper, convex function with $\psi_{\ell}(0) < 0$
and
$$ \cI_{\mu,p}(\psi_{\ell}) \stackrel{\ell \rightarrow \infty} \longrightarrow \inf_{\psi} \cI_{\mu, p}(\psi) $$
where the infimum runs over all $\mu$-integrable, proper, convex functions $\psi: \RR^n \rightarrow \RR \cup \{+\infty\}$ with $\psi(0) < 0$.
Thanks to Lemma \ref{lem_351}, we may select the sequence $\{ \psi_{\ell} \}$ so that
\begin{equation}
\sup_{\ell \geq 1} \cI_{\mu, p}(\psi_{\ell}) < 0.
\label{eq_351}
\end{equation}
Moreover, we know  that $\cI_{\mu, p}(\psi_{\ell})$ remains intact when we add a linear functional to $\psi_{\ell}$.
Arguing as in the proof of Proposition \ref{prop_515}, we may add appropriate linear functionals to $\psi_{\ell}$ and assume that
\begin{equation} \inf_{x \in \RR^n} \psi_{\ell}(x) = \psi_{\ell}(0)
\qquad \qquad \text{for} \ \ell \geq 1. \label{eq_842} \end{equation}

\begin{lemma} We have that $\sup_{\ell} \psi_{\ell}(0) < 0$ and $\inf_{\ell} \psi_{\ell}(0) > -\infty$. \label{lem_523}
\end{lemma}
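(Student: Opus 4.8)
The plan is to reduce both assertions to the quadratic lower bound of Lemma \ref{lem_406}. Write $\alpha_\ell = -\psi_\ell(0) > 0$. Since $\sup_\ell \psi_\ell(0) < 0$ is the same as $\inf_\ell \alpha_\ell > 0$, and $\inf_\ell \psi_\ell(0) > -\infty$ is the same as $\sup_\ell \alpha_\ell < +\infty$, it suffices to trap every $\alpha_\ell$ in one fixed compact subinterval of $(0,+\infty)$.

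First I would check that Lemma \ref{lem_406} applies to each $\psi_\ell$. By construction each $\psi_\ell$ is a $\mu$-integrable, proper convex function, the normalization (\ref{eq_842}) gives $\psi_\ell(0) = \inf \psi_\ell$, and since $\cI_p^2(\psi_\ell) \geq 0$ by definition, (\ref{eq_351}) yields $\int_{\RR^n} \psi_\ell \, d\mu \leq \cI_{\mu,p}(\psi_\ell) < 0$. Hence Lemma \ref{lem_406} gives $\cI_{\mu,p}(\psi_\ell) \geq -\alpha_\ell + \tilde c\, \alpha_\ell^2$, where $\tilde c > 0$ depends only on $\mu$ and $p$, so in particular is independent of $\ell$.

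Next, set $c_3 := -\sup_{\ell \geq 1} \cI_{\mu,p}(\psi_\ell)$, which is a strictly positive number by (\ref{eq_351}). Combining this with the previous estimate gives $\tilde c\, \alpha_\ell^2 - \alpha_\ell + c_3 \leq 0$ for every $\ell$. Because the leading coefficient $\tilde c$ is positive, a nonempty solution set forces the discriminant $1 - 4\tilde c c_3$ to be nonnegative (which is anyway consistent with Proposition \ref{prop_515}, giving $c_3 \leq 1/(4\tilde c)$), and then $\alpha_\ell$ must lie between the two roots
$$ t_\pm = \frac{1 \pm \sqrt{1 - 4 \tilde c c_3}}{2 \tilde c}. $$
Since $c_3 > 0$ we have $\sqrt{1 - 4\tilde c c_3} < 1$, so $0 < t_- \leq t_+ \leq 1/\tilde c$, and therefore $t_- \leq \alpha_\ell \leq t_+$ for all $\ell$. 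Unwinding the substitution, $\sup_\ell \psi_\ell(0) \leq -t_- < 0$ and $\inf_\ell \psi_\ell(0) \geq -t_+ > -\infty$, which is the claim.

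I do not expect a genuine obstacle here; the lemma is essentially immediate from the material already assembled. The only points meriting a little care are verifying the three hypotheses of Lemma \ref{lem_406} for each member of the minimizing sequence — where (\ref{eq_842}) and (\ref{eq_351}) do the work — and observing that the quadratic $\tilde c t^2 - t + c_3$ has two \emph{strictly} positive roots, so that the lower bound $\inf_\ell \alpha_\ell > 0$, the more substantive of the two statements, falls out automatically rather than needing a separate argument.
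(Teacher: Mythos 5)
Your proof is correct and rests on the same core ingredients as the paper's: the normalization (\ref{eq_842}), the uniform negativity bound (\ref{eq_351}), and the quadratic lower bound of Lemma \ref{lem_406}. The only minor stylistic difference is that the paper obtains $\sup_\ell \psi_\ell(0) < 0$ directly from the chain $\psi_\ell(0) = \inf\psi_\ell \le \int\psi_\ell\,d\mu \le \cI_{\mu,p}(\psi_\ell)$ together with (\ref{eq_351}), reserving Lemma \ref{lem_406} for the other inequality, whereas you extract both bounds from the two roots of the quadratic $\tilde c\,t^2 - t + c_3 \le 0$; both are valid and the unification is a nice touch.
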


\begin{proof} By (\ref{eq_842}), for any $\ell \geq 1$,
$$ \psi_{\ell}(0) = \inf_{x \in \RR^n} \psi_{\ell}(x) \leq \int_{\RR^n} \psi_{\ell} d \mu \leq \cI_{\mu,p}(\psi_{\ell}). $$
Inequality (\ref{eq_351}) thus implies that $\sup_{\ell} \psi_{\ell}(0) < 0$. Moreover,
it follows from (\ref{eq_351}) that $\int \psi_{\ell} d \mu < 0$ for all $\ell$. From (\ref{eq_351}), (\ref{eq_842}) and Lemma \ref{lem_406},
$$ \psi_{\ell}(0) + \tilde{c} ( \psi_{\ell}(0) )^2 \leq \cI_{\mu, p}(\psi_{\ell}) < 0 \qquad \qquad (\ell \geq 1). $$
Hence $\inf_{\ell} \psi_{\ell}(0) \geq - 1 / \tilde{c} > -\infty$.
\end{proof}

Write $K \subseteq \RR^n$ for the interior of $\conv(\supp(\mu))$.
Then $K$ is an open, convex set containing the origin.
Lemma 16 in \cite{CK} states that for any non-negative, $\mu$-integrable, convex function $f: \RR^n \rightarrow \RR \cup \{+ \infty \}$
and any point $x \in K$,
\begin{equation}
f(x) \leq C_{\mu}(x) \int_{\RR^n} f d \mu, \label{eq_520}
\end{equation}
where $C_{\mu}(x) > 0$ depends solely on $x$ and $\mu$.

\begin{lemma} There exists a sequence of integers $\{ \ell_j \}_{j=1,2,\ldots}$ such that
$\psi_{\ell_j}$ converges pointwise in $K$ to a certain convex function $\psi: K \rightarrow \RR$. \label{lem_532}
\end{lemma}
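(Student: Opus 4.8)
The plan is to show that the minimizing sequence $\{\psi_\ell\}$ is locally uniformly bounded on $K$, and then to extract a locally uniformly convergent subsequence using the standard compactness properties of convex functions. By Lemma \ref{lem_1340} each $\psi_\ell$ is finite, hence continuous, on the open convex set $K$, so it is legitimate to evaluate and compare these functions there. By Lemma \ref{lem_523} the number $\beta := -\inf_\ell \psi_\ell(0)$ is finite and positive, so that $-\beta \le \psi_\ell(0) < 0$ for every $\ell$. Since $\psi_\ell(0) = \inf_{\RR^n} \psi_\ell$ by (\ref{eq_842}), the function $f_\ell := \psi_\ell - \psi_\ell(0)$ is a non-negative, $\mu$-integrable, convex function on $\RR^n$, with $\int_{\RR^n} f_\ell \, d\mu = \int_{\RR^n} \psi_\ell \, d\mu - \psi_\ell(0)$.

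The key step is to apply the estimate (\ref{eq_520}) from \cite{CK} to $f_\ell$. For $x \in K$ it gives
\[ f_\ell(x) \le C_\mu(x) \int_{\RR^n} f_\ell \, d\mu = C_\mu(x)\left( \int_{\RR^n} \psi_\ell \, d\mu - \psi_\ell(0) \right). \]
By (\ref{eq_351}) and the non-negativity of $\cI_p^2$ we have $\int_{\RR^n} \psi_\ell \, d\mu \le \cI_{\mu,p}(\psi_\ell) < 0$, hence $\int_{\RR^n} f_\ell \, d\mu < -\psi_\ell(0) \le \beta$. Therefore, for all $\ell \ge 1$ and all $x \in K$,
\[ -\beta \le \psi_\ell(0) \le \psi_\ell(x) = f_\ell(x) + \psi_\ell(0) \le \beta \, C_\mu(x), \]
where the last inequality uses $\psi_\ell(0) < 0$. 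Thus for each fixed $x \in K$ the sequence $\{\psi_\ell(x)\}_\ell$ lies in the bounded interval $[-\beta, \beta C_\mu(x)]$.

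It remains to upgrade this pointwise bound to a locally uniform one and invoke compactness. Given a compact set $L \subseteq K$, since $K$ is open and convex one may choose finitely many points $v_1,\dots,v_m \in K$ whose convex hull $P := \conv(v_1,\dots,v_m) \subseteq K$ contains $L$ in its interior; writing any $x \in P$ as a convex combination of the $v_i$ and using convexity of $\psi_\ell$ gives $\psi_\ell(x) \le \max_i \psi_\ell(v_i) \le \beta \max_i C_\mu(v_i)$, which together with $\psi_\ell \ge -\beta$ shows that $\{\psi_\ell\}$ is uniformly bounded on $P$, in particular on a neighbourhood of $L$. Since a convex function bounded by a constant $A$ on a Euclidean ball of radius $2r$ is Lipschitz on the concentric ball of radius $r$ with a Lipschitz constant depending only on $A$ and $r$, the family $\{\psi_\ell\}$ is uniformly Lipschitz, hence equicontinuous, on $L$. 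Exhausting $K$ by compact sets $L_1 \subseteq L_2 \subseteq \cdots$ with $\bigcup_m L_m = K$, the Arzel\`a--Ascoli theorem yields for each $m$ a uniformly convergent subsequence on $L_m$, and a diagonal extraction produces a single subsequence $\{\psi_{\ell_j}\}$ converging uniformly on every $L_m$, hence locally uniformly --- in particular pointwise --- on $K$. The limit $\psi : K \to \RR$ is finite (its values lie in $[-\beta, \beta C_\mu(x)]$) and convex, being a pointwise limit of convex functions, which is the assertion of the lemma. The only substantive point is the passage from the variational control on $\int_{\RR^n} \psi_\ell \, d\mu$ and on $\psi_\ell(0)$ to the pointwise bound on $\psi_\ell$ over $K$; this is precisely where (\ref{eq_520}) is indispensable, and once it is in hand the remainder is the routine compactness theory of convex functions.
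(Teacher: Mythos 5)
Your proof is correct and follows essentially the same strategy as the paper: use Lemma~\ref{lem_523} together with the estimate (\ref{eq_520}) from \cite{CK} applied to a non-negative shift of $\psi_\ell$ to get a pointwise bound on $K$, then extract a convergent subsequence by compactness of locally bounded convex functions. The only cosmetic difference is that the paper cites Rockafellar's Theorem~10.9 for the compactness step, whereas you reprove it directly via uniform Lipschitz bounds, Arzel\`a--Ascoli, and a diagonal argument.
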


\begin{proof} Fix a point $x_0 \in K$. We claim that
\begin{equation}
\sup_{\ell \geq 1} |\psi_{\ell}(x_0)| < +\infty. \label{eq_530}
\end{equation}
Indeed, the fact that
the sequence $\{ \psi_{\ell}(x_0) \}_{\ell=1,2,\ldots}$ is bounded from
below follows from (\ref{eq_842}) and Lemma \ref{lem_523}.
In order to show that this sequence is bounded from above, we denote
\begin{equation} \beta = -\inf \left \{ \psi_{\ell}(x) \, ; \, x \in \RR^n, \ell \geq 1 \right \} =
-\inf \left \{ \psi_{\ell}(0) \, ; \, \ell \geq 1 \right \}
\label{eq_816} \end{equation}
which is a finite, positive number thanks to Lemma \ref{lem_523}.
Apply (\ref{eq_520}) for
the non-negative, $\mu$-integrable, convex function $f_{\ell} = \psi_{\ell} + \beta$, and obtain
\begin{align*}  f_{\ell}(x_0) & \leq C_{\mu}(x_0) \int_{\RR^n} f_{\ell}(x) d \mu(x) = C_{\mu}(x_0) \int_{\RR^n} ( \psi_{\ell} + \beta ) d\mu \\ & \leq C_{\mu}(x_0) \left( \beta + \cI_{\mu, p}(\psi_{\ell}) \right)
\leq C_{\mu}(x_0) \beta, \end{align*}
where we used (\ref{eq_351}) in the last passage.
This shows that $\sup_{\ell} f_{\ell}(x_0) < \infty$, and
consequently $\sup_{\ell} \psi_{\ell}(x_0) < \infty$. The proof of (\ref{eq_530}) is complete. We may now invoke Theorem 10.9 from Rockafellar \cite{roc},
thanks to (\ref{eq_530}), and conclude that there exists a subsequence $\{ \psi_{\ell_j} \}$
satisfying the conclusion of the lemma.
\end{proof}

\begin{proof}[Proof of Proposition \ref{prop_340}]
We will use the convergent subsequence $\{ \psi_{\ell_j} \}$ from Lemma \ref{lem_532}.
The  function $\psi = \lim_{j} \psi_{\ell_j}$ is finite and convex in the open, convex set $K$. Moreover,  $\psi(0) \in (-\infty, 0)$ as follows
from Lemma \ref{lem_523}. Since $\psi_{\ell}(x) \geq \psi_{\ell}(0)$ for any $x \in \RR^n$ and $\ell \geq 1$, also
\begin{equation}
\psi(0) = \inf_{x \in K} \psi(x) \in (-\infty, 0). \label{eq_803}
\end{equation}
 The function $\psi$ is currently defined only in the set $K$.
In order to have a globally defined function in $\RR^n$, we set $\psi(x) = +\infty$ for $x \in \RR^n \setminus \overline{K}$.
For $x \in \partial K$, define
\begin{equation} \psi(x) = \lim_{t \rightarrow 1^-} \psi(t x). \label{eq_804} \end{equation}
Since $\psi$ is convex in $K$, it follows from (\ref{eq_803}) that the function $t \mapsto \psi(t x)$ is non-decreasing in $t \in (0,1)$,
hence the limit in (\ref{eq_804}) is well-defined. Moreover, the function $\psi: \RR^n \rightarrow \RR \cup \{ + \infty \}$ is a proper, convex function,
since on $\overline{K}$ we have $\psi = \sup_{t \in (0,1)} f_t$ where $f_t(x) = \psi(t x)$ is finite, convex and continuous on $\overline{K}$.
The measure $\mu$ is supported in the closure $\overline{K}$.
From the pointwise convergence in $K$, it follows that $\psi_{\ell_j}(t x) \longrightarrow \psi(t x)$ for any $0 < t < 1$ and $x \in \overline{K}$.
We claim that by Fatou's lemma, for any $0 < t < 1$,
\begin{equation} \int_{\overline{K}} \psi(t x) d \mu(x) \leq \liminf_{j \rightarrow \infty} \int_{\overline{K}} \psi_{\ell_j}(t x) d \mu(x)
\leq \liminf_{j \rightarrow \infty} \int_{\overline{K}} \psi_{\ell_j}(x) d \mu(x). \label{eq_809}
\end{equation}
Indeed, the use of Fatou's lemma is legitimate according to (\ref{eq_842}) and Lemma \ref{lem_523}, because  $\inf_{x, \ell} \psi_{\ell}(x) > -\infty$.
The relation (\ref{eq_842}) also implies that $\psi_{\ell}(tx) \leq \psi_{\ell}(x)$ for any $x \in \overline{K}, \ell \geq 1$ and $0 < t < 1$, completing the justification
of (\ref{eq_809}). Next, we use the fact that $\psi(t x) \nearrow \psi(x)$ as $t \rightarrow 1^-$ for any $x \in \overline{K}$.
Since $\psi$ is bounded from below, we may use the monotone convergence theorem, and upgrade (\ref{eq_809}) to the bound
\begin{equation}
\int_{\RR^n} \psi d \mu = \int_{\overline{K}} \psi d \mu
= \lim_{t \rightarrow 1^-} \int_{\overline{K}} \psi(t x) d \mu(x)
\leq \liminf_{j \rightarrow \infty} \int_{\overline{K}} \psi_{\ell_j} d \mu = \liminf_{j \rightarrow \infty} \int_{\RR^n} \psi_{\ell_j} d \mu.
\label{eq_813}
\end{equation}
Recall from (\ref{eq_351}) that $\sup_j \int \psi_{\ell_j} d \mu < 0$.
It follows from (\ref{eq_803}) and (\ref{eq_813}) that $\psi$ is a $\mu$-integrable, proper, convex
function with $\psi(0) < 0$.
 All that remains is to prove that
\begin{equation}
\cI_{\mu, p}(\psi) \leq \liminf_{j \rightarrow \infty} \cI_{\mu, p}(\psi_{\ell_j}).
\label{eq_1043}
\end{equation}
The convex function $\psi$ satisfies $K \subseteq \dom(\psi) \subseteq \overline{K}$,
and $\psi_{\ell_j} \longrightarrow \psi$ pointwise in $K$ as $j \rightarrow \infty$. From
Lemma \ref{lem_1402},
\begin{equation} \cI_p(\psi) \leq \liminf_{j \rightarrow \infty}  \cI_p(\psi_{\ell_j})
\qquad \text{and hence} \qquad \cI_p^2(\psi) \leq \liminf_{j \rightarrow \infty}  \cI_p^2(\psi_{\ell_j}).
\label{eq_1038}
\end{equation}
Now (\ref{eq_1043}) follows from (\ref{eq_813}), (\ref{eq_1038}) and the definition of $\cI_{\mu, p}$.
\end{proof}

From the proof of Proposition \ref{prop_340} we see that the minimizer $\psi$ may be selected so that $\psi(x) =+\infty$
for any $x \in \RR^n \setminus \overline{K}$.
Theorem \ref{cor_515} now follows from Proposition \ref{prop_515},
 Proposition \ref{prop_340} and Lemma \ref{lem_351}.

\section{$q$-moment measures}
\setcounter{equation}{0}

Let $q > 0$ and let $\vphi: \RR^n \rightarrow \RR$ be a positive, convex function such that $Z_\vphi := \int_{\RR^n} \vphi^{-(n+q)} < \infty$.
The function $\vphi$ is differentiable almost everywhere in $\RR^n$ because  it is convex.
We define the {\it $q$-moment measure of $\vphi$} to be the push-forward
of the probability measure on $\RR^n$ with density  $Z_{\vphi}^{-1} / \vphi^{n+q}$
under the measurable map
$x \mapsto \nabla \vphi(x)$.
In other words, a Borel probability measure $\mu$  on $\RR^n$ is the $q$-moment measure of $\vphi$ if for any bounded, continuous function
$b: \RR^n \rightarrow \RR$,
\begin{equation}
\int_{\RR^n} b(y) d \mu(y) = \int_{\RR^n} \frac{b(\nabla \vphi(x))}{\vphi^{n+q}(x)} \frac{dx}{Z_{\vphi}}.
\label{eq_406} \end{equation}
The moment measure of $\vphi$ is a well-defined probability measure on $\RR^n$, whenever $\vphi$ is a positive,
convex function on $\RR^n$ such that $\vphi^{-(n+q)}$ is integrable.

\begin{lemma} Let $q > 0$ and let $\vphi: \RR^n \rightarrow \RR$ be a positive, convex function. Then the function $\vphi^{-(n+q)}$ is integrable
if and only if $ \lim_{|x| \rightarrow \infty} \vphi(x) = +\infty$. Moreover, in this case there exist $\alpha, \beta > 0$ such that $\vphi(x) \geq \alpha + \beta |x|$
for all $x \in \RR^n$.
\label{lem_457}
\end{lemma}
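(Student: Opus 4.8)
The statement has two implications together with the quantitative ``moreover'' clause, and the organizing observation is that the affine lower bound $\varphi(x)\ge\alpha+\beta|x|$ is on the one hand the quantitative form of $\varphi\to+\infty$, and on the other hand forces $\varphi^{-(n+q)}\in L^1$ immediately. So the plan is to prove first that $\lim_{|x|\to\infty}\varphi(x)=+\infty$ implies the affine bound (this is the ``moreover'' part), then deduce integrability, and finally prove the converse implication by contraposition. For the affine bound, set $a=\varphi(0)>0$ and fix $R>0$ with $\varphi(x)\ge 2a$ for $|x|\ge R$. Given $|x|\ge R$, put $\lambda=|x|/R\ge1$ and $x'=x/\lambda$, so $|x'|=R$ and $x'=\tfrac1\lambda x+(1-\tfrac1\lambda)\cdot0$; convexity gives $\varphi(x')\le\tfrac1\lambda\varphi(x)+(1-\tfrac1\lambda)a$, hence
\[
\varphi(x)\ \ge\ \lambda\varphi(x')-(\lambda-1)a\ \ge\ 2a\lambda-(\lambda-1)a\ =\ a+\frac{a}{R}\,|x|\qquad(|x|\ge R).
\]
Since the finite convex function $\varphi$ is continuous and positive on the compact ball $\overline{B(0,R)}$, its minimum $c_0$ there is positive and $\le a$; then $\alpha=c_0/2$, $\beta=c_0/(2R)$ give $\alpha+\beta|x|\le c_0\le\varphi(x)$ for $|x|\le R$ and $\alpha+\beta|x|\le a+\tfrac aR|x|\le\varphi(x)$ for $|x|\ge R$. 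Integrability then follows, since $\int_{\RR^n}(\alpha+\beta|x|)^{-(n+q)}dx<\infty$, the radial integrand decaying like $r^{-1-q}$ with $q>0$.

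For the converse I would argue the contrapositive: if $\varphi$ does not tend to $+\infty$, there are $M>0$ and points $x_j$ with $|x_j|\to\infty$ and $\varphi(x_j)\le M$. Let $C=\max_{\overline{B(0,1)}}\varphi<\infty$ (by continuity) and $M'=\max\{M,C\}$. Every point of $Q_j:=\conv(\overline{B(0,1)}\cup\{x_j\})$ is of the form $tx_j+(1-t)v$ with $t\in[0,1]$, $v\in\overline{B(0,1)}$, so convexity gives $\varphi\le M'$ on all of $Q_j$. Being convex and containing $x_j$ together with the equatorial $(n-1)$-ball $\overline{B(0,1)}\cap x_j^\perp$, the body $Q_j$ contains the cone over that base with apex $x_j$, whose $n$-volume equals $\tfrac1n\Vol_{n-1}(\overline{B(0,1)}\cap x_j^\perp)\cdot|x_j|=c_n|x_j|$ for a dimensional constant $c_n>0$. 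Hence
\[
\int_{\RR^n}\frac{dx}{\varphi(x)^{n+q}}\ \ge\ \int_{Q_j}\frac{dx}{\varphi(x)^{n+q}}\ \ge\ \frac{\Vol_n(Q_j)}{(M')^{n+q}}\ \ge\ \frac{c_n|x_j|}{(M')^{n+q}}\ \xrightarrow[j\to\infty]{}\ +\infty,
\]
so $\varphi^{-(n+q)}\notin L^1$, and the two implications together give the equivalence.

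The only step needing real care is the converse, where one must produce, for each $j$, a set on which $\varphi$ is bounded and whose volume grows \emph{linearly} in $|x_j|$; a far segment or a thin sliver would not suffice, and this is exactly where finiteness of $\varphi$ on all of $\RR^n$ enters — through boundedness of $\varphi$ on $\overline{B(0,1)}$ and the use of the full truncated cone. A minor subtlety in the forward direction is that pointwise positivity of $\varphi$ is too weak near the origin, so the compactness minimum $c_0>0$ on $\overline{B(0,R)}$ is what lets one splice the exterior bound into a single affine minorant on all of $\RR^n$. The remaining ingredients — continuity of finite convex functions, the cone-volume formula, and convergence of $\int(\alpha+\beta|x|)^{-(n+q)}dx$ — are routine.
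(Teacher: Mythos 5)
Your proof is correct. For the ``moreover'' clause and the implication $\lim_{|x|\to\infty}\vphi(x)=+\infty \Rightarrow \vphi^{-(n+q)}\in L^1$, you follow essentially the same route as the paper: fix $R$ so that $\vphi$ is at least some margin above $\vphi(0)$ on $|x|\ge R$, use the one-variable convexity inequality along the segment $[0,x]$ to get an affine lower bound outside $B(0,R)$, take the positive minimum on the compact ball $\overline{B(0,R)}$, and splice into a single affine minorant; the constants differ cosmetically ($2a$ vs.\ $\vphi(0)+1$), nothing more.

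Where you genuinely diverge is in the other implication. The paper argues directly: if $\vphi^{-(n+q)}$ is integrable, each sublevel set $\{\vphi<R\}$ has finite volume, hence (being open and convex) is bounded, and therefore $\vphi\to\infty$. You take the contrapositive and make the volume growth explicit: from a bounded sequence $\vphi(x_j)\le M$ with $|x_j|\to\infty$ you build the truncated cones $Q_j=\conv(\overline{B(0,1)}\cup\{x_j\})$, show $\vphi\le M'$ there, and note that $Q_j$ contains a solid cone over an equatorial $(n-1)$-ball so that $\Vol_n(Q_j)\gtrsim |x_j|\to\infty$, forcing divergence of the integral. This is a correct and self-contained substitute for the fact that a finite-volume open convex set is bounded; in effect you re-prove that fact in the form you need it. The paper's version is shorter because it cites the boundedness criterion as known; yours is longer but more elementary and makes the linear-in-$|x_j|$ volume growth visible, which is the same quantitative mechanism in disguise. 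Both arguments are sound; no gaps.
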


\begin{proof} Assume that $\vphi^{-(n+q)}$ is integrable. Then for any $R > 0$, the open convex set $\{ x \in \RR^n \, ; \, \vphi(x) < R \}$ has a finite volume
and hence it is bounded. Therefore $\lim_{|x| \rightarrow \infty} \vphi(x) = +\infty$. Conversely, assume that $\vphi(x)$ tends to infinity as $|x| \rightarrow \infty$.
Then there exists $R > 0$ such that $\vphi(x) \geq \vphi(0) + 1$ whenever $|x| \geq R$. By convexity, for any $|x| > R$,
$$ \vphi(0) + 1 \leq \vphi \left( \frac{R}{|x|} x \right) \leq \left(1 - \frac{R}{|x|} \right) \vphi(0) + \frac{R}{|x|} \vphi(x). $$
Therefore $\vphi(x) \geq \vphi(0) + |x| / R$ for all $|x| > R$. By continuity, $c = \min_{|x| \leq R} \vphi(x)$ is positive. Hence
 $\vphi(x) \geq c/2 + \min \{ 1/R, c / (2 R) \} \cdot |x|$ for all
$x \in \RR^n$, and $\vphi^{-(n+q)}$ is integrable.
\end{proof}

Lemma \ref{lem_457} demonstrates that if $\vphi^{-(n+q)}$ is integrable for some $q > 0$, then it is integrable for all $q > 0$.
The moment measures from \cite{CK} correspond in a sense to the case $q = \infty$, since in \cite{CK} we push forward the measure
on $\RR^n$ with density $\exp(-\vphi)$ via the map $x \mapsto \nabla \vphi(x)$.
For a convex function $\vphi: \RR^n \rightarrow \RR$ and for $\lambda > 0$  we say that $$ (\lambda \times  \vphi)(x) = \lambda \vphi(x / \lambda) \qquad \qquad \qquad (x \in \RR^n) $$
is the $\lambda$-dilation of $\vphi$. Note that the $q$-moment measure of $\vphi$ is exactly the same as the $q$-moment measure of its dilation $\lambda \times \vphi$,
assuming that one of these $q$-moment measures exists. It is also clear that replacing $\vphi(x)$ by its translation $\vphi(x - x_0)$, for some $x_0 \in \RR^n$,
does not have any effect on the resulting  $q$-moment measure.

\begin{theorem} Let $q > 1$ and let $\mu$ be a compactly-supported Borel probability measure on $\RR^n$ whose barycenter lies at the origin.
Assume that the origin is in  the interior of $\conv(\supp(\mu))$.

\medskip Then there exists a positive, convex function $\vphi: \RR^n \rightarrow \RR$ whose $q$-moment measure is $\mu$.
This convex function $\vphi$ is uniquely determined up to translation and dilation.
\label{thm2}
\end{theorem}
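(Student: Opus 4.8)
The plan is to derive both existence and uniqueness from the variational problem solved in Theorem \ref{cor_515}, by identifying the Euler--Lagrange equation of the functional $\cI_{\mu,p}$ with the $q$-moment measure equation. First I would set $p = q - 1 > 0$, which is legitimate since $q > 1$, and apply Theorem \ref{cor_515} to the measure $\mu$: this is permitted because $\mu$ is compactly supported (hence $\int |x|\,d\mu < \infty$), has barycenter at the origin, and the origin lies in the interior of $K = \conv(\supp(\mu))$. Let $\psi$ be the minimizer of $\cI_{\mu,p}$ furnished by that theorem, so that $\psi$ is proper convex, $\mu$-integrable, $\psi(0) < 0$, $\psi \equiv +\infty$ off $\overline{K}$, and $\cI_{\mu,p}(\psi) < 0$. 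Then I would set $\vphi = \psi^*$; by the Legendre duality recalled in Section \ref{sec_var}, $\vphi$ is convex, and since $\inf \vphi = \inf \psi^* \geq -\psi(0) > 0$ the function $\vphi$ is everywhere positive, while $\dom(\psi) \subseteq \overline{K}$ forces $\vphi$ to grow at least linearly at infinity, so by Lemma \ref{lem_457} the integral $Z_\vphi = \int \vphi^{-(n+q)}$ is finite and $\vphi$ is a legitimate candidate. Also $\dom(\psi) \supseteq K \neq \emptyset$ so $\vphi$ is finite, hence (being convex and finite on $\RR^n$) continuous; one should note $\vphi$ need not be smooth, so the map $x \mapsto \nabla\vphi(x)$ is only defined a.e., which is exactly the setting of the $q$-moment measure definition (\ref{eq_406}).

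The heart of the argument is the first-variation computation: I would show that the $q$-moment measure of $\vphi = \psi^*$ equals $\mu$. Fix a bounded continuous $b : \RR^n \to \RR$ and consider the perturbation $\psi_t = \psi + t\, b^* $ — more precisely, since we need $\psi_t$ to stay admissible, it is cleaner to perturb $\psi$ directly by replacing it with $\psi_t(x) = \psi(x) - t\langle x, \text{(something)}\rangle$ is not enough; instead one perturbs $\vphi$: let $\vphi_t = \vphi + t g$ for a bounded smooth compactly supported $g$, set $\psi_t = \vphi_t^{\,*}$, and compute $\frac{d}{dt}\big|_{t=0} \cI_{\mu,p}(\psi_t)$. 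Using $\psi_t^{**} = \psi_t$ and the envelope theorem for the Legendre transform ($\frac{d}{dt}\psi_t(y) = -g(\nabla\psi_t(y))$ at points of differentiability, evaluated via the optimal $x$), together with differentiation under the integral sign for the term $\cI_p^2(\psi) = \big(\int \vphi^{-(n+q)}\big)^{-2/p}$, the stationarity condition $\frac{d}{dt}\big|_{t=0}\cI_{\mu,p}(\psi_t) = 0$ becomes, after the change of variables $y = \nabla\vphi(x)$ and using $p = q-1$ so that the exponent $(n+p)+1 = n+q$ matches, precisely the identity
\begin{equation*}
\int_{\RR^n} g\,d\mu = \int_{\RR^n} \frac{g(\nabla\vphi(x))}{\vphi(x)^{n+q}}\,\frac{dx}{Z_\vphi}
\end{equation*}
up to the normalizing constant coming from $\cI_p^2(\psi) < 0$ being finite and nonzero. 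Since this holds for a dense class of test functions $g$, and both sides are finite Borel measures, $\mu$ is the $q$-moment measure of $\vphi$. The main technical obstacle here is making the first-variation rigorous despite the lack of smoothness of $\psi$ and $\vphi$: one must justify differentiation under the integral sign (using the lower bound $\vphi \geq \alpha + \beta|x|$ from Lemma \ref{lem_457} to dominate), handle the boundary behaviour of $\psi$ on $\partial K$, and control the one-sided derivatives so that a \emph{two-sided} stationarity equation is obtained rather than merely an inequality — this is where one exploits that perturbations $g$ of both signs keep $\vphi_t$ positive for small $|t|$.

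For uniqueness up to translation and dilation, I would argue as follows. If $\vphi$ has $q$-moment measure $\mu$, then reversing the computation above, $\psi = \vphi^*$ is a critical point of $\cI_{\mu,p}$; but $\cI_{\mu,p}$ is a convex functional — its first term $\int \psi\,d\mu$ is linear in $\psi$, and $\cI_p^2(\psi) = \cI_p(\psi)^2$ is convex because $\cI_p$ is convex and nonnegative (Lemma \ref{lem_1530}(i)--(ii) give positive homogeneity and subadditivity of $\cI_p$, hence convexity, and squaring a nonnegative convex function preserves convexity) — so any critical point is a global minimizer, and the minimum value, being negative by Lemma \ref{lem_351}, is attained. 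Thus all functions with $q$-moment measure $\mu$ correspond to minimizers $\psi$ of $\cI_{\mu,p}$. To pin down the minimizer up to the stated symmetries, suppose $\psi_0, \psi_1$ are two minimizers; then $(\psi_0+\psi_1)/2$ is also a minimizer, which forces equality in the convexity of $\cI_p^2$, hence in Lemma \ref{lem_1530}(ii), hence (invoking Lemma \ref{lem_1530}(iii), after checking that the relevant domains are all of $\RR^n$, which holds because $\vphi_i = \psi_i^*$ are finite everywhere by Lemma \ref{lem_457}) there exist $\lambda > 0$ and $x_0$ with $\vphi_1(x) = \lambda \vphi_0(x_0 + x/\lambda)$; equality must also hold in the linear term's contribution, which forces the translation parameter to be consistent — and the relation $\vphi_1(x) = \lambda\vphi_0(x_0 + x/\lambda)$ says exactly that $\vphi_1$ is a dilation of a translate of $\vphi_0$, matching the two symmetries ($\lambda$-dilation and translation by $-\lambda x_0$) that manifestly preserve the $q$-moment measure. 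I expect the first-variation step to be the genuine obstacle; the uniqueness step is then a fairly direct consequence of the equality case in Borell--Brascamp--Lieb already quoted via Dubuc.
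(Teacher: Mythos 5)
Your proposal follows the same broad strategy as the paper---derive existence via first variation of $\cI_{\mu,p}$ at the minimizer supplied by Theorem \ref{cor_515}, and derive uniqueness from the equality case of Borell--Brascamp--Lieb via Dubuc---so the skeleton is right. But the specific route you sketch for the first variation has a gap you don't flag, and it is precisely the gap the paper's Lemma \ref{lemma_936} is designed to close. You propose perturbing $\vphi_t = \vphi + tg$ and setting $\psi_t = \vphi_t^*$; but $\vphi + tg$ is not convex for $t \neq 0$, so $\psi_t^* = \vphi_t^{**} \neq \vphi_t$, and the envelope identity $\tfrac{d}{dt}\psi_t = -g(\nabla\psi_t)$ you want to invoke is not available in the form you state it. The paper instead perturbs on the $\psi$ side, $\psi_1 = \psi + b$ with $b$ bounded, and observes that one may pass to $\psi_1^{**}$ without increasing the functional---so one may assume $\psi_1$ convex, and then $\vphi_1 = \psi_1^*$ is genuinely a Legendre transform, for which \cite[Lemma 2.7]{BB} gives the differentiation formula. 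More importantly, the paper never differentiates the nonlinear composition $\bigl(\int \vphi_t^{-(n+p)}\bigr)^{-2/p}$ directly: Lemma \ref{lemma_936} first uses Lemma \ref{lem_242} to add a constant to $\psi_1$ so that $\int \vphi_1^{-(n+p)} = \int \vphi^{-(n+p)}$, which makes the $\cI_p^2$-terms cancel identically and leaves the linear inequality $\int \psi\,d\mu \leq \int \psi_1\,d\mu$. Then the elementary convexity of $s\mapsto s^{-(n+p)}$ (inequality \eqref{eq_451}) converts the normalization constraint into $\int \vphi\,d\nu \leq \int \vphi_1\,d\nu$, where $\nu$ has density $Z_\vphi^{-1}\vphi^{-(n+q)}$. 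Only then does the transportation argument enter, and it enters in the exact form of the Brenier/Gangbo--McCann variational inequality \eqref{eq_734}, already clean of all nonlinearities. Your plan, by contrast, would need to justify differentiating under the integral sign through the power $(\cdot)^{-2/p}$ and establish two-sided differentiability of the composed functional; you correctly call this the ``genuine obstacle'' but do not give the device that removes it.

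On uniqueness your plan is again structurally right but underspecified. Saying ``$\cI_{\mu,p}$ is convex, so any critical point is a global minimum'' presupposes a notion of critical point for a nonsmooth functional on a cone of convex functions, and does not immediately tell you that a general $\vphi_0$ with $q$-moment measure $\mu$ has $\psi_0 = \vphi_0^*$ admissible ($\mu$-integrable, etc.). The paper instead proves Proposition \ref{prop_858}, a quantitative tangent-line inequality: if $\vphi_0$ has $q$-moment measure $\mu$ then for every admissible $\psi_1$,
\[
\cI_p^2(\psi_1) \;\geq\; \cI_p^2(\psi_0) \;+\; c(\vphi_0)\int (\psi_0 - \psi_1)\,d\mu,
\]
with an explicit positive constant $c(\vphi_0)$; the proof along the way needs Lemma \ref{lem_947} (to know $\psi_0$ is $\mu$-integrable) and the two reduction Lemmas \ref{lem_1432_}, \ref{lem_1432} to match domains. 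Given two solutions $\vphi_0,\vphi_1$, the paper dilates so that the constants $c(\vphi_0) = c(\vphi_1)$, applies the tangent inequality twice around the midpoint, adds, and concludes equality in the Borell--Brascamp--Lieb step, whence Dubuc. Your sketch of ``equality in convexity forces Lemma \ref{lem_1530}(iii)'' is the right punchline, but you omit the dilation normalization \eqref{eq_1053} that makes the two tangent inequalities combine cleanly, and you do not explain how to verify the hypothesis $\dom(\vphi_0)=\dom(\vphi_1)=\RR^n$ needed for Lemma \ref{lem_1530}(iii); the paper supplies this via Lemma \ref{lem_457}. In short: same approach, but the two technical devices that make it work---the additive normalization in Lemma \ref{lemma_936} that linearizes the problem before any differentiation, and the tangent-line inequality of Proposition \ref{prop_858}---are missing from your plan.
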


Theorem \ref{thm2} is a variant for $q$-moment measures of a result proven in \cite{CK} in the case of moment measures.
The case where $\mu$ is not compactly-supported will not be discussed in this paper, although we expect
that similarly to \cite{CK}, essential-continuity will play a role in the analysis of this case.
We also restrict our attention to the case $q > 1$.
The necessity of the barycenter condition in Theorem \ref{thm2} follows from:

\begin{proposition} Let $q > 1$ and let $\mu$ be a compactly-supported Borel probability measure on $\RR^n$.
Assume that $\mu$ is the $q$-moment measure of a positive, convex function $\vphi: \RR^n \rightarrow \RR$.
Then the barycenter of $\mu$ lies at the origin, which belongs to the interior of $\conv(\supp(\mu))$.
\label{prop_1049}
\end{proposition}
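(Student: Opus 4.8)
The plan is to exploit the divergence structure hidden in the $q$-moment measure: one has the pointwise identity $\vphi^{-(n+q)}\nabla\vphi = -\frac{1}{n+q-1}\nabla\bigl(\vphi^{-(n+q-1)}\bigr)$, so the barycenter of $\mu$ is, up to a positive constant, the integral over $\RR^n$ of a gradient, and an integration by parts should make it vanish. The separation properties of convex bodies then upgrade ``the origin is in $\conv(\supp\mu)$'' to ``the origin is in its interior''.

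First I would collect the regularity that comes for free. Since $\mu$ is the $q$-moment measure of $\vphi$, the function $\vphi^{-(n+q)}$ is integrable, so Lemma \ref{lem_457} furnishes $\alpha,\beta>0$ with $\vphi(x)\ge\alpha+\beta|x|$ for all $x$; in particular $\inf\vphi>0$, and $g:=\vphi^{-(n+q-1)}$ is bounded, positive, locally Lipschitz, and decays at infinity like $|x|^{-(n+q-1)}$. Moreover, writing $\mu=(\nabla\vphi)_*\nu$ with $\nu$ the probability measure of density $Z_\vphi^{-1}\vphi^{-(n+q)}$, the measure $\nu$ is equivalent to Lebesgue measure while $\mu$ is compactly supported, so $\nabla\vphi(x)\in\supp(\mu)$ for almost every $x$; hence $\nabla\vphi$ is essentially bounded, and applying the defining identity (\ref{eq_406}) to compactly supported continuous functions that agree with the coordinate functions on $\supp(\mu)$ gives $\int_{\RR^n}\nabla\vphi\cdot\vphi^{-(n+q)}\,dx=Z_\vphi\int_{\RR^n}y\,d\mu(y)$, together with the finiteness of this integral. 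In particular $\nabla g=-(n+q-1)\vphi^{-(n+q)}\nabla\vphi\in L^1(\RR^n)$.

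For the barycenter assertion I would integrate by parts with a cutoff. Fix a smooth $\chi_R$ with $\chi_R\equiv 1$ on the ball of radius $R$, $\chi_R\equiv 0$ outside the ball of radius $2R$, $0\le\chi_R\le 1$ and $|\nabla\chi_R|\le 2/R$. Since $g\chi_R$ is compactly supported and Lipschitz, $\int_{\RR^n}\nabla(g\chi_R)\,dx=0$ component by component, so $\int_{\RR^n}\chi_R\nabla g\,dx=-\int_{\RR^n}g\,\nabla\chi_R\,dx$, and the last integral is bounded in norm by $\frac{2}{R}\int_{R\le|x|\le 2R}(\alpha+\beta|x|)^{-(n+q-1)}\,dx=O(R^{-q})\to 0$ as $R\to\infty$. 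On the left, $|\chi_R\nabla g|\le|\nabla g|\in L^1$, so dominated convergence gives $\int_{\RR^n}\chi_R\nabla g\to\int_{\RR^n}\nabla g$. Hence $\int_{\RR^n}\nabla g=0$, and the barycenter of $\mu$ equals $-\frac{1}{(n+q-1)Z_\vphi}\int_{\RR^n}\nabla g\,dx=0$.

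Finally, for the interior assertion I would argue by contradiction. If the origin is not in the interior of $\conv(\supp\mu)$, a supporting/separating hyperplane produces $\theta\in S^{n-1}$ with $\langle y,\theta\rangle\le 0$ for every $y\in\supp\mu$; since the barycenter of $\mu$ is $0$, the nonpositive function $y\mapsto\langle y,\theta\rangle$ has zero $\mu$-integral, hence vanishes $\mu$-a.e., i.e.\ $\supp\mu\subseteq\theta^\perp$. Pulling this back through $\nabla\vphi$ and using $\nu\sim$ Lebesgue again yields $\langle\nabla\vphi(x),\theta\rangle=0$ for almost every $x\in\RR^n$; since $\vphi$ is finite and convex, restricting to lines in direction $\theta$ (which carry convex, locally absolutely continuous one-variable functions with a.e.\ zero derivative, hence constant ones) and then using continuity shows that $\vphi$ is constant along the direction $\theta$. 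Integrating $\vphi^{-(n+q)}$ first along those lines then gives $\int_{\RR^n}\vphi^{-(n+q)}=+\infty$, contradicting $Z_\vphi<\infty$. I expect the main technical obstacle to be the integration-by-parts step — precisely, checking that $\nabla(\vphi^{-(n+q-1)})$ is integrable and that the cutoff error $\frac1R\int_{R\le|x|\le 2R}\vphi^{-(n+q-1)}$ tends to $0$; this is exactly where the linear lower bound $\vphi(x)\gtrsim|x|$ from Lemma \ref{lem_457} and the compactness of $\supp(\mu)$ do the work. The separation argument is routine once the barycenter is known.
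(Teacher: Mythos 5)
Your proof is correct and follows essentially the same route as the paper: compute the barycenter by plugging (truncations of) coordinate functions into the defining identity (\ref{eq_406}), recognize $\vphi^{-(n+q)}\nabla\vphi$ as a constant multiple of $\nabla\bigl(\vphi^{-(n+q-1)}\bigr)$, and conclude by integration by parts; then for the interior claim, separate to get a $\theta\in S^{n-1}$, use the zero barycenter to force $\supp\mu\subseteq\theta^\perp$, pull back via $\nabla\vphi$ (with the density $\vphi^{-(n+q)}$ positive everywhere), and derive that $\vphi$ is constant in the direction $\theta$, contradicting integrability of $\vphi^{-(n+q)}$. The only difference is that you carry out the cutoff integration-by-parts explicitly (with the decay from Lemma~\ref{lem_457}) where the paper defers to \cite[Lemma 4]{CK}; the substance is identical.
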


\begin{proof} We may substitute $b(x) = x_i$ in
(\ref{eq_406}), since $b$ is bounded on $\supp(\mu)$. This shows that for $i=1,\ldots,n$,
$$  \int_{\RR^n} x_i d \mu(x) = \int_{\RR^n} \frac{\partial_i \vphi}{\vphi^{n+q}} =
-\frac{1}{n+q-1} \int_{\RR^n} \partial_i \left( \frac{1}{\vphi^{n+q-1}}  \right) = 0, $$
along the lines of \cite[Lemma 4]{CK}. Therefore the barycenter of $\mu$ lies at the origin.
Assume by contradiction that the origin is not in the interior of $\conv(\supp(\mu))$.
Since the barycenter of $\mu$ lies at the origin, necessarily $\mu$ is supported in a hyperplane of the form
$H = \theta^{\perp}$ for some $\theta \in S^{n-1}$. Since $\mu$ is the $q$-moment measure of $\vphi$,
we see that \begin{equation} \label{eq_1046_} \partial_{\theta} \vphi(x) = \langle \nabla \vphi(x), \theta \rangle = 0 \qquad \qquad \text{for almost all} \ x \in \RR^n. \end{equation}
The function $\vphi$ is locally-Lipschitz in $\RR^n$, being a finite, convex function. The relation (\ref{eq_1046_}) shows that
$\vphi$ is constant on almost any line parallel to $\theta$, contradicting the integrability of $\vphi^{-(n+q)}$.
\end{proof}

The proof of Theorem \ref{thm2} occupies most of the remainder of this section.
Begin the proof  with the following:

\begin{lemma} Let $q > 1$ and let $\vphi: \RR^n \rightarrow \RR$ be a positive, convex function with $\int_{\RR^n} \vphi^{-(n+q)} < \infty$.
Write $\mu$ for the $q$-moment measure of $\vphi$, and assume that $\mu$ is compactly-supported. Set $\psi = \vphi^*$. Then,
$$ \int_{\RR^n} |\psi| d \mu < \infty. $$ \label{lem_947}
\end{lemma}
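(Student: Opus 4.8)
The plan is to show that $\psi = \vphi^*$ is bounded above on $\supp(\mu)$, which together with a crude lower bound will yield $\mu$-integrability. The key observation is that since $\mu$ is the $q$-moment measure of $\vphi$, the support of $\mu$ is essentially captured by where the gradient $\nabla\vphi$ takes its values, and for a point $y$ in the image of $\nabla\vphi$ the Legendre transform satisfies $\psi(y) = \langle x, y\rangle - \vphi(x)$ for a point $x$ with $\nabla\vphi(x) = y$. First I would record that, by Lemma \ref{lem_457}, there exist $\alpha,\beta>0$ with $\vphi(x) \geq \alpha + \beta|x|$, so $\psi(y) = \vphi^*(y) \leq -\alpha + \sup_x(\langle x, y\rangle - \beta|x|)$, which is $-\alpha$ if $|y| \leq \beta$ and $+\infty$ otherwise. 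More usefully, $\psi(y) < +\infty$ exactly when $|y| \leq \beta$, and in any case $\psi$ is a proper convex function. Since $\psi$ is convex and finite on the bounded set where it matters, it is locally bounded there.

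The heart of the argument is a lower bound: I need $\psi(y) \geq -C$ for $\mu$-almost every $y$, since the upper bound on $\psi$ (finite on a bounded set, hence bounded above on any compact subset) will be immediate once I know $\supp(\mu)$ is compact — which is a hypothesis. For the lower bound, note $\psi(y) = \sup_x(\langle x,y\rangle - \vphi(x)) \geq \langle 0, y\rangle - \vphi(0) = -\vphi(0)$ for every $y$, so in fact $\psi \geq -\vphi(0)$ everywhere, which is a uniform constant lower bound. Combined with the upper bound on $\psi$ over the compact set $\supp(\mu)$, this gives $|\psi| \leq \max\{\vphi(0), \sup_{\supp(\mu)}\psi\} < \infty$ on $\supp(\mu)$, hence $\int |\psi|\, d\mu < \infty$ since $\mu$ is a probability measure.

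The one point requiring care — and the place I expect the main (minor) obstacle — is justifying that $\psi$ is actually \emph{bounded above} on $\supp(\mu)$, rather than merely finite there. The issue is that $\psi$, being convex and proper, could blow up near the boundary of its domain $\dom(\psi) = \{|y| \leq \beta\}$ (think of $\psi$ approaching $+\infty$ at the boundary), and a priori $\supp(\mu)$ might reach that boundary. Here I would use that $\mu$ is the $q$-moment measure: the pushforward is under $x \mapsto \nabla\vphi(x)$, and since $\vphi^{-(n+q)}$ is integrable and $\vphi(x) \geq \alpha + \beta|x|$ with $\beta$ the \emph{asymptotic} slope, the gradient $\nabla\vphi(x)$ stays in the \emph{interior} $\{|y| < \beta\}$ for every finite $x$ where $\vphi$ is differentiable — the boundary slope $\beta$ is only attained in the limit $|x|\to\infty$. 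More carefully: $\supp(\mu)$ is the closure of the essential image of $\nabla\vphi$, and I would argue that this closure is a compact subset of $\{y : \psi(y) < +\infty\}$ on which the convex function $\psi$ is therefore bounded; alternatively, since the hypothesis directly asserts $\mu$ is compactly supported, and $\psi$ is lower semi-continuous and finite on $\dom(\psi) \supseteq \supp(\mu)$, one checks $\psi$ restricted to the compact $\supp(\mu)$ attains a finite maximum provided $\supp(\mu) \subseteq \dom(\psi)$; the containment $\supp(\mu)\subseteq\overline{\{|y|\le\beta\}}$ is clear, and the strict containment in the relative interior follows from the $q$-moment structure as above. Assembling these gives $\int_{\RR^n}|\psi|\,d\mu \leq \mu(\RR^n)\cdot\big(\vphi(0) + \max_{\supp(\mu)}\psi\big) < \infty$.
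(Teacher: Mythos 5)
There is a genuine gap: the claim that $\psi$ is bounded above on $\supp(\mu)$ fails in general, and this is not a removable technicality but the heart of the matter. Consider in $\RR^1$ the positive convex function $\vphi(x) = \alpha + x - \sqrt{x}$ for $x \geq 1$ (extended suitably to a positive convex function on all of $\RR$). Then $\vphi'(x) = 1 - \tfrac{1}{2\sqrt{x}} \to 1^-$, so $\supp(\mu) = [-1,1]$ is compact, yet
\[
\psi(\vphi'(x)) = x\vphi'(x) - \vphi(x) = \tfrac{\sqrt{x}}{2} - \alpha \longrightarrow +\infty,
\]
so $\psi$ blows up along the image of $\nabla\vphi$ as it approaches the boundary of $\supp(\mu)$. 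In fact $\dom(\psi) = (-1,1)$ does \emph{not} contain $\supp(\mu)$, and $\psi$ is unbounded even on $\supp(\mu) \cap \dom(\psi)$; your assertion that ``$\supp(\mu)$ is a compact subset of $\{y : \psi(y) < +\infty\}$ on which the convex function $\psi$ is therefore bounded'' is simply false. The separate claim $\dom(\psi) = \{|y|\le\beta\}$ is also backwards: $\vphi \ge \alpha + \beta|\cdot|$ gives $\dom(\psi) \supseteq \overline{B(0,\beta)}$, not equality.

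A telling symptom is that your argument never uses the hypothesis $q > 1$, yet this hypothesis is essential: the integrability does not come from boundedness of $\psi$ but from decay of the density of $\mu$ near $\partial\,\dom(\psi)$. The paper instead pulls the integral back via the change of variables $y = \nabla\vphi(x)$, uses the Legendre identity $\psi(\nabla\vphi(x)) = \langle x, \nabla\vphi(x)\rangle - \vphi(x)$ to get $|\psi(\nabla\vphi(x))| \le |x|\,|\nabla\vphi(x)| + \vphi(x)$, and then exploits that $|\nabla\vphi|$ is essentially bounded (compact support) together with $\vphi(x) \ge \alpha + \beta|x|$: this reduces the problem to integrability of $|x|/\vphi^{n+q}$ and $\vphi^{-(n+q-1)}$, both of which hold precisely because $q > 1$. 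You should discard the boundedness strategy and argue along these lines.
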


\begin{proof} It follows from the definition of the Legendre transform
that for any point $x \in \RR^n$ in which $\vphi$ is differentiable, $$ \langle x, \nabla \vphi(x) \rangle =   \psi(\nabla \vphi(x)) + \vphi(x). $$
For almost any $x \in \RR^n$ we have that $\nabla \vphi(x) \in \supp(\mu)$.
Since $\mu$ is compactly-supported, then $|\nabla \vphi(x)|$ is an $L^{\infty}$-function in $\RR^n$. Consequently,
$$ \int_{\RR^n} \vphi^{-(n+q)} \int_{\RR^n} |\psi| d \mu = \int_{\RR^n}
\frac{|\psi(\nabla \vphi(x))|}{\vphi^{n+q}(x)} dx \leq
 \int_{\RR^n}
\frac{|\langle x, \nabla \vphi(x) \rangle| + \vphi(x)}{\vphi^{n+q}(x)} dx< \infty,
$$
by Lemma \ref{lem_457}, since $q > 1$. This completes the proof.
\end{proof}

\begin{lemma} Let $A,p > 0$ and let $\mu$ be as in Theorem \ref{thm2}.
Let $\psi: \RR^n \rightarrow \RR \cup \{ + \infty \}$  be a $\mu$-integrable, proper, convex function
such that $\dom(\psi)$ is bounded.
For $t \in \RR$ denote $\psi_t = \psi + t$ and $\vphi_t = \psi_t^*$. Then
for any $t < -\psi(0)$, the function $\vphi_t: \RR^n \rightarrow \RR$ is a positive, convex function
with $\int_{\RR^n} \vphi_t^{-(n+p)} \in (0, \infty)$. Moreover, there exists $t < -\psi(0)$ with
$$ \int_{\RR^n} \vphi_t^{-(n+p)}(x) dx = A. $$
\label{lem_242}
\end{lemma}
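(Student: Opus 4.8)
The plan is to reduce both assertions to the behaviour of the single real variable function
$$ F(t) = \int_{\RR^n} \frac{dx}{(\vphi(x) - t)^{n+p}}, \qquad \vphi := \psi^*. $$
First I would record the elementary identity $\vphi_t = \psi_t^* = (\psi + t)^* = \psi^* - t = \vphi - t$, valid because adding a constant to a function subtracts that constant from its Legendre transform. Next I would verify that $\vphi = \psi^*$ is a finite (hence continuous), convex, coercive function on $\RR^n$. For finiteness: by Lemma \ref{lem_1340} the origin lies in the interior of $\dom(\psi)$, so $\psi$ admits a supporting affine function there, which is bounded below on the bounded set $\dom(\psi)$; hence $\inf \psi > -\infty$, and taking $R$ with $\dom(\psi) \subseteq B(0,R)$ we get $\psi^*(y) \le R|y| - \inf \psi < \infty$ for every $y$. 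For coercivity: $\dom(\psi)$ contains a small closed ball $\overline{B(0,\delta)}$ on which $\psi$ is bounded, say by $C$, so $\psi^*(y) \ge \delta|y| - C \to \infty$; being finite, convex and coercive, $\vphi$ attains its infimum at some $y_0 \in \RR^n$, and by $\psi^{**} = \psi$ one computes $\inf \vphi = \inf \psi^* = -\psi^{**}(0) = -\psi(0)$.

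The first assertion is then immediate: for $t < -\psi(0) = \inf \vphi$ the function $\vphi_t = \vphi - t$ is convex and satisfies $\vphi_t \ge \inf \vphi - t > 0$, so $\vphi_t : \RR^n \to \RR$ is positive and convex with $\vphi_t(x) \to +\infty$ as $|x| \to \infty$; Lemma \ref{lem_457} gives $\int_{\RR^n} \vphi_t^{-(n+p)} < \infty$, and the integral is positive because $\vphi_t$ is everywhere finite. Thus $\int_{\RR^n} \vphi_t^{-(n+p)} \in (0,\infty)$.

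For the second assertion I would analyse $F$ on the interval $(-\infty, -\psi(0))$, where it is finite by the previous paragraph. It is strictly increasing, since the integrand strictly increases in $t$ on a set of positive measure; and it is continuous by dominated convergence, the integrands near any $t_0 < -\psi(0)$ being dominated by $(\vphi - t_1)^{-(n+p)} \in L^1(\RR^n)$ for a fixed $t_1 \in (t_0, -\psi(0))$. As $t \to -\infty$ the integrands tend pointwise to $0$ while staying dominated by a fixed integrable function, so $F(t) \to 0$. As $t \nearrow -\psi(0)$, monotone convergence gives $F(t) \to \int_{\RR^n} (\vphi - \inf\vphi)^{-(n+p)}$; but $\vphi$ is Lipschitz near $y_0$, say $\vphi(y) - \inf \vphi \le L|y - y_0|$ there, so the integrand is $\ge L^{-(n+p)} |y - y_0|^{-(n+p)}$ near $y_0$, which is not Lebesgue integrable because $n + p > n$. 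Hence $F(t) \to +\infty$, and by the intermediate value theorem $F$ attains the value $A$ at some $t < -\psi(0)$, as desired.

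I expect the main obstacle to be exactly this last divergence, $F(t) \to +\infty$ as $t \nearrow -\psi(0)$: it is what forces the range of $F$ to be all of $(0,\infty)$ and hence makes the prescribed value $A$ attainable. It rests on $\vphi$ attaining its minimum — which requires the coercivity of $\vphi$, i.e. the full-dimensionality of $\dom(\psi)$ coming from $0 \in \mathrm{int}\,\dom(\psi)$ — together with the local Lipschitz property of convex functions and the non-integrability of $|y - y_0|^{-(n+p)}$ at $y_0$. The remaining ingredients are routine: the Legendre-transform identities recalled in the excerpt and the dominated and monotone convergence theorems.
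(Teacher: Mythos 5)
Your argument is correct and follows essentially the same route as the paper: both observe $\vphi_t = \vphi - t$, show $\vphi$ is finite, coercive and globally Lipschitz (since $\dom(\psi)$ is bounded), use the Lipschitz bound near the minimum to force $\int \vphi_t^{-(n+p)} \to +\infty$ as $t \nearrow -\psi(0)$, combine with the monotone/dominated-convergence facts that the integral is $0$ at $t=-\infty$ and continuous in between, and finish with the intermediate value theorem. The only cosmetic differences are that the paper works with near-minimizers $x_0(t)$ rather than an exact minimizer $y_0$, and derives the linear lower bound on $\vphi_t$ directly instead of invoking Lemma~\ref{lem_457}; neither changes the substance.
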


\begin{proof}
The set $\dom(\psi)$ is assumed to be bounded. Set $L = 1 + \sup_{x \in \dom(\psi)} |x| < \infty$.
Denoting $\vphi = \psi^*$,  we learn from Corollary 13.3.3 in Rockafellar \cite{roc}
that the convex function $\vphi: \RR^n \rightarrow \RR$ is an $L$-Lipschitz function.
 Lemma \ref{lem_1340} implies that $\psi$ is finite in an open neighborhood of the origin.
Fix $t < -\psi(0)$. By the continuity of $\psi$ near the origin, there exists $\eps_t > 0$, depending on $\psi$ and $t$, such that
$$ \psi_t(x) < -\eps_t \qquad \qquad \text{when} \ |x| < \eps_t. $$
Hence, for any $y \in \RR^n$ and $t < -\psi(0)$,
\begin{equation}  \vphi_t(y) = \sup_{x \in \dom(\psi_t)} \left[ \langle x, y \rangle - \psi_t(x) \right] \geq \sup_{|x| < \eps_t} \left[ \langle x, y \rangle + \eps_t \right]
= \eps_t + \eps_t |y|. \label{eq_734_} \end{equation}
Set $t_0 = -\psi(0)$, and for $t \in (-\infty, t_0)$ define
\begin{equation} I(t) = \int_{\RR^n} \frac{dx}{(\vphi_t(x))^{n+p}}   = \int_{\RR^n} \frac{dx}{(\vphi(x) - t)^{n+p}}. \label{eq_1049} \end{equation}
It follows from (\ref{eq_734_}) that the function $\vphi_t^{-(n+p)}$ is integrable on $\RR^n$.
The positive function $\vphi: \RR^n \rightarrow \RR$ is $L$-Lipschitz, hence the integral of $\vphi_t^{-(n+p)}$ is positive.
The function $I$ is clearly non-decreasing in $t \in (-\infty, t_0)$, and by the monotone convergence theorem,
$I$ is continuous in $(-\infty, t_0)$. In order to conclude  the lemma by the mean value theorem, it suffices to prove that
$$ \lim_{t \rightarrow -\infty} I(t) = 0, \quad \lim_{t \rightarrow t_0^-} I(t) = +\infty.
$$
The fact that $I(t) \rightarrow 0$ as $t \rightarrow -\infty$ is evident from (\ref{eq_1049}) and the monotone convergence theorem.
It remains  to show that $I(t) \rightarrow +\infty$ as $t \rightarrow t_0^-$.
With any $t < t_0$ we associate a point $x_0(t) \in \RR^n$ that satisfies
$$ \vphi(x_0(t)) < \frac{t_0 - t}{2} + \inf_{x \in \RR^n} \vphi(x) =   \frac{t_0 - t}{2} - \psi(0) = \frac{t_0 - t}{2} + t_0. $$
For any $t < t_0$, denoting $r = (t_0 - t) / (2L)$, we see that $\vphi(x) \leq \vphi(x_0(t)) + (t_0 - t) / 2 $ for any $x$ in the ball $B(x_0(t), r)$. Therefore, for any $t < t_0$,
$$ I(t) =\int_{\RR^n} \frac{dx}{(\vphi(x) - t)^{n+p}} \geq \int_{B(x_0(t), r)} \frac{dx}{(\vphi(x) - t)^{n+p}} \geq
\frac{\kappa_n r^n}{(2t_0 - 2t)^{n+p}} = \frac{\kappa_n 2^{-2n-p} L^{-n}}{(t_0 - t)^p}
$$
where $\kappa_n = \Vol_n(B(0,1))$ is the volume of the Euclidean unit ball. Since $p > 0$,
$$ \lim_{t \rightarrow t_0^-} I(t) \geq \lim_{t \rightarrow t_0^-} \frac{\kappa_n 2^{-2n-p} L^{-n}}{(t_0 - t)^p} = +\infty $$
and the lemma is proven.
\end{proof}

\begin{lemma} Let $q > 1$ and let $\mu$ be as in Theorem \ref{thm2}.
Let $\psi: \RR^n \rightarrow \RR \cup \{ + \infty \}$  be the $\mu$-integrable, proper, convex function whose existence is guaranteed
by Theorem \ref{cor_515} with $p = q - 1$.

\medskip Denote $\vphi = \psi^*$. Then
$\vphi: \RR^n \rightarrow \RR$ is a positive function
and the probability measure $\nu$ on $\RR^n$ with density $Z_{\vphi}^{-1} / \vphi^{n+q}$
is well-defined. Moreover, for any function $\psi_1: \RR^n \rightarrow \RR \cup \{ + \infty \}$
of the form $\psi_1 = \psi + b$, with  $b: \RR^n \rightarrow \RR$ being a bounded function, we have
\begin{equation}  \int_{\RR^n} \psi d \mu + \int_{\RR^n} \psi^* d \nu \leq \int_{\RR^n} \psi_1 d \mu + \int_{\RR^n} \psi_1^* d \nu. \label{eq_734}
\end{equation}
\label{lemma_936}
\end{lemma}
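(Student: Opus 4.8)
Here is the plan.

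\medskip\noindent\textbf{Step 1: elementary facts and a normalization identity.}
Since $\dom(\psi)\subseteq\overline K$ is bounded, Corollary 13.3.3 in Rockafellar \cite{roc} gives that $\vphi=\psi^*$ is finite, convex and Lipschitz on $\RR^n$; since $\psi(0)<0$ and $\psi$ is finite near the origin (Lemma \ref{lem_1340}), picking $\eps>0$ with $\psi<-\eps$ on $\{|x|<\eps\}$ yields $\vphi(y)=\psi^*(y)\geq\eps+\eps|y|$ for all $y$. Thus $\vphi>0$, and as $q>1$ both $\vphi^{-(n+q)}$ and $\vphi^{-(n+p)}=\vphi^{-(n+q-1)}$ are integrable, so $\nu$ is a well-defined probability measure; writing $Z'=\int_{\RR^n}\vphi^{-(n+p)}dx\in(0,\infty)$ one has $\int\psi^*d\nu=Z_\vphi^{-1}Z'$, and $|\psi_1^*-\vphi|\le\|b\|_\infty$ makes $\int\psi_1^*d\nu$ finite, so both sides of (\ref{eq_734}) are finite. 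Next, for $t<-\psi(0)$ the function $\psi+t$ is admissible in Theorem \ref{cor_515}, and $t=0$ is interior to this range, so $t\mapsto\cI_{\mu,p}(\psi+t)=\int\psi\,d\mu+t+\bigl(\int(\vphi-t)^{-(n+p)}dx\bigr)^{-2/p}$ has vanishing derivative at $t=0$. Since $(n+p)+1=n+q$ we get $\frac{d}{dt}\int(\vphi-t)^{-(n+p)}dx\big|_{t=0}=(n+p)\int\vphi^{-(n+q)}dx=(n+p)Z_\vphi$, so stationarity reads
\begin{align*}
1=\tfrac{2(n+p)}{p}\,(Z')^{-2/p-1}\,Z_\vphi,\qquad\text{i.e.}\qquad \kappa:=\tfrac{2(n+p)}{p}(Z')^{-2/p-1}=\tfrac1{Z_\vphi}.
\end{align*}

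\medskip\noindent\textbf{Step 2: biconjugate reduction and Legendre interpolation.}
Fix $\psi_1=\psi+b$ with $b$ bounded and let $\hat\psi=\psi_1^{**}$. Then $\hat\psi\leq\psi_1$ pointwise, $\hat\psi^*=\psi_1^*$, and $|\hat\psi-\psi|\leq\|b\|_\infty$; hence $\int\psi_1\,d\mu+\int\psi_1^*d\nu\geq\int\hat\psi\,d\mu+\int\hat\psi^*d\nu$, so it suffices to prove (\ref{eq_734}) with $\hat\psi$ in place of $\psi_1$. For small $\lambda\geq0$ put $\vphi_\lambda=(1-\lambda)\vphi+\lambda\psi_1^*$ and $\psi_\lambda=\vphi_\lambda^*$, so $\psi_0=\psi$ and $\psi_\lambda^*=\vphi_\lambda$. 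Because $\psi_1^*\geq\vphi-\|b\|_\infty$, for $\lambda$ below an explicit threshold $\vphi_\lambda\geq\vphi-\lambda\|b\|_\infty\geq(\eps/2)(1+|y|)>0$, so $\vphi_\lambda^{-(n+p)}$ is integrable; moreover $\psi_\lambda(0)=-\inf\vphi_\lambda\leq\psi(0)+\lambda\|b\|_\infty<0$; and from the inf-convolution formula $\psi_\lambda(x)=\inf\{(1-\lambda)\psi(z_1)+\lambda\hat\psi(z_2):(1-\lambda)z_1+\lambda z_2=x\}$ (take $z_1=z_2=x$) we get $\psi_\lambda\leq(1-\lambda)\psi+\lambda\hat\psi$, giving $\mu$-integrability of $\psi_\lambda$. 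So each $\psi_\lambda$ is an admissible competitor, and by Theorem \ref{cor_515} the map $\lambda\mapsto\cI_{\mu,p}(\psi_\lambda)$ is minimized at $\lambda=0$.

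\medskip\noindent\textbf{Step 3: differentiate at $\lambda=0$ and conclude.}
Thus the right derivative at $0$ is $\geq0$. For the energy term, $\lambda\mapsto\psi_\lambda(x)$ is convex (a supremum of affine functions of $\lambda$) and $\leq(1-\lambda)\psi(x)+\lambda\hat\psi(x)$, so the monotone difference quotients give $\frac{d}{d\lambda}\psi_\lambda(x)\big|_{0^+}\leq\hat\psi(x)-\psi(x)$, and monotone convergence yields $\frac{d}{d\lambda}\int\psi_\lambda\,d\mu\big|_{0^+}\leq\int(\hat\psi-\psi)\,d\mu$. For the $\cI_p^2$ term, differentiating under the integral (legitimate since $|\partial_\lambda\vphi_\lambda^{-(n+p)}|\leq(n+p)\|b\|_\infty\vphi_\lambda^{-(n+q)}$ is dominated by a fixed integrable function) and using $\int\vphi^{1-(n+q)}dx=Z'$ gives $\frac{d}{d\lambda}\bigl(\int\vphi_\lambda^{-(n+p)}dx\bigr)^{-2/p}\big|_{0}=\kappa\bigl(\int\psi_1^*\vphi^{-(n+q)}dx-Z'\bigr)$. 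Adding and using $\kappa=1/Z_\vphi$,
\begin{align*}
0\leq\frac{d}{d\lambda}\cI_{\mu,p}(\psi_\lambda)\Big|_{0^+}\leq\int_{\RR^n}(\hat\psi-\psi)\,d\mu+\frac1{Z_\vphi}\Bigl(\int_{\RR^n}\psi_1^*\vphi^{-(n+q)}dx-Z'\Bigr).
\end{align*}
But the right-hand side equals $\bigl(\int\hat\psi\,d\mu+\int\hat\psi^*d\nu\bigr)-\bigl(\int\psi\,d\mu+\int\psi^*d\nu\bigr)$, since $\hat\psi^*=\psi_1^*$ and $\int\psi^*d\nu=Z_\vphi^{-1}Z'$. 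Hence (\ref{eq_734}) holds with $\hat\psi$, and therefore with $\psi_1$.

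\medskip\noindent\textbf{Main obstacle.}
The crux is bridging the two a priori different functionals: the Legendre‑interpolation derivative of $\cI_{\mu,p}$ must coincide exactly with the increment of $\psi_1\mapsto\int\psi_1\,d\mu+\int\psi_1^*d\nu$. This is precisely what forces $p=q-1$ (so that differentiating $\int(\vphi-t)^{-(n+p)}$ raises the exponent to $n+q$, the one occurring in $\nu$) and the normalization $\kappa=1/Z_\vphi$, and why one differentiates at $\lambda=0$, where the non‑convex term $\int\psi_\lambda\,d\mu$ is controlled from above by $\int(\hat\psi-\psi)\,d\mu$. The remaining care lies in the two interchanges of differentiation and integration, handled respectively by the growth bound $\vphi\geq\eps(1+|y|)$ and by the monotonicity of the difference quotients coming from convexity in $\lambda$.
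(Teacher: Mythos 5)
Your proof is correct, but it takes a genuinely different route from the paper's. The paper avoids differentiating the functional $\cI_{\mu,p}$ altogether: after the biconjugate reduction (which you share), it exploits the fact that adding a constant $c$ to $\psi_1$ changes $\int\psi_1\,d\mu$ by $+c$ and $\int\psi_1^*\,d\nu$ by $-c$, so the right-hand side of \eqref{eq_734} is invariant under constant shifts; it then uses Lemma~\ref{lem_242} to choose the shift so that $\int\vphi_1^{-(n+p)}=\int\vphi^{-(n+p)}$. With this normalization, the minimality of $\psi$ in Theorem~\ref{cor_515} directly yields $\int\psi\,d\mu\le\int\psi_1\,d\mu$, and the pointwise ``above-tangent'' inequality $\frac{n+p}{t^{n+p+1}}(t-s)\le s^{-(n+p)}-t^{-(n+p)}$ yields $\int\vphi\,d\nu\le\int\vphi_1\,d\nu$; adding these gives \eqref{eq_734} with no derivatives of $\cI_{\mu,p}$ taken. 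Your approach instead linearizes: you extract a stationarity identity $\kappa=1/Z_\vphi$ by differentiating in the constant-shift direction, then differentiate $\cI_{\mu,p}$ along the Legendre interpolation $\vphi_\lambda=(1-\lambda)\vphi+\lambda\psi_1^*$, matching the derivative to the increment of the transport functional. Both are valid; the paper's finite argument is shorter and sidesteps the technical points you must handle (convexity of $\lambda\mapsto\psi_\lambda(x)$, differentiation under the integral, one-sided derivatives of the energy term), whereas yours makes more transparent \emph{why} $p=q-1$ is the right choice and why the probability normalization of $\nu$ appears. A minor remark: your appeal to monotone convergence in Step~3 is unnecessary --- since $\psi_\lambda\le(1-\lambda)\psi+\lambda\hat\psi$ gives $\int\frac{\psi_\lambda-\psi}{\lambda}\,d\mu\le\int(\hat\psi-\psi)\,d\mu$ for every small $\lambda>0$, one can pass directly to the limsup without any convergence theorem.
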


\begin{proof} Write $\overline{K}$ for the closure of $\conv(\supp(\mu))$, a compact set in $\RR^n$.  Theorem \ref{cor_515} states that $\psi(0) < 0$ and that $\dom(\psi) \subseteq \overline{K}$. Therefore, by Lemma \ref{lem_242}, the function $\vphi: \RR^n \rightarrow \RR$ is a positive, convex function with
\begin{equation} \int_{\RR^n} \vphi^{-(n+p)} \in (0, +\infty). \label{eq_451_} \end{equation}
It thus follows from Lemma \ref{lem_457} that the probability measure $\nu$ is well-defined.
The function $\psi_1^{**}$ is proper, convex, and it satisfies $\psi - C \leq \psi_{1}^{**} \leq \psi_1 \leq \psi + C$ for some $C > 0$.
It suffices to prove (\ref{eq_734}) under the additional assumption that $\psi_1$ is proper and convex: Otherwise, replace $\psi_1$ with the smaller $\psi_1^{**}$,
and observe that the right-hand side of (\ref{eq_734}) cannot increase under such a replacement.

\medskip Hence we may assume that $\psi_1$ is a $\mu$-integrable, proper, convex function. Moreover, the convex set $\dom(\psi_1) = \dom(\psi)$ is bounded
according to Theorem \ref{cor_515}.
The right hand-side of (\ref{eq_734}) is not altered if we add a constant to the function $\psi_1$, since $\mu$ and $\nu$ are probability measures.
By adding an appropriate constant to $\psi_1$ and by using Lemma \ref{lem_242} and (\ref{eq_451_}), we may assume that the convex function $\psi_1$ satisfies that $\psi_1(0) < 0$ and
\begin{equation} \int_{\RR^n} \frac{dx}{\vphi_1^{n+p}(x)} = \int_{\RR^n} \frac{dx}{\vphi^{n+p}(x)} \label{eq_156} \end{equation}
where $\vphi_1 = \psi_1^*: \RR^n \rightarrow \RR$ is a positive function. Since $\psi_1(0) < 0$, by Theorem \ref{cor_515},
\begin{equation} \int_{\RR^n} \psi d \mu + \left( \int_{\RR^n} \frac{1}{\vphi^{n+p}} \right)^{-2/p}
\leq \int_{\RR^n} \psi_1 d \mu + \left( \int_{\RR^n} \frac{1}{\vphi_1^{n+p}} \right)^{-2/p}. \label{eq_1004} \end{equation}
 From (\ref{eq_156}) and (\ref{eq_1004}),
\begin{equation}
\int_{\RR^n} \psi d \mu \leq \int_{\RR^n} \psi_1 d \mu.
\label{eq_246} \end{equation}
Note  the elementary inequality
$$  \frac{n+p}{t^{n+p+1}} (t - s) \leq \frac{1}{s^{n+p}} - \frac{1}{t^{n+p}}
\qquad \qquad \qquad (s,t > 0) $$
which follows from the convexity of the function $t \mapsto t^{-(n+p)}$ on $(0, \infty)$. The latter inequality implies that
\begin{equation}  \int_{\RR^n} \left( \vphi - \vphi_1 \right) \frac{n+p}{\vphi^{n+p+1}} \leq \int_{\RR^n} \left[ \frac{1}{\vphi_1^{n+p}} -  \frac{1}{\vphi^{n+p}} \right] = 0 \label{eq_451}
\end{equation}
where we used (\ref{eq_156}) in the last passage. Since $\vphi_1 - \vphi$ is a bounded function, all integrals in
(\ref{eq_451}) converge.
From (\ref{eq_451}) and the definition of the measure $\nu$,
\begin{equation}
\int_{\RR^n} \vphi d \nu \leq \int_{\RR^n} \vphi_1 d \nu. \label{eq_245}
\end{equation}
The desired inequality (\ref{eq_734}) follows from (\ref{eq_246}) and (\ref{eq_245}).
\end{proof}

\begin{proof}[Proof of the existence part in Theorem \ref{thm2}]
Lemma \ref{lemma_936} is the variational problem associated with
 {\it optimal transportation}, see Brenier \cite{brenier}
and Gangbo and McCann \cite{gangboo_mccann}.
Let $\psi, \vphi = \psi^*$ and $\nu$ be as in Lemma \ref{lemma_936}.
Then $\vphi: \RR^n \rightarrow \RR$ is a positive, convex function on $\RR^n$.
A standard argument from \cite{brenier, gangboo_mccann}
leads
us from (\ref{eq_734}) to the conclusion
that $\nabla \vphi$ pushes forward the measure $\nu$ to the measure $\mu$.

\medskip Let us provide some details. The idea of this standard argument is to apply (\ref{eq_734}) with the function
$\psi_1 = \psi + \eps b$, where $\eps > 0$ is a small number and $b: \RR^n \rightarrow \RR$ is a bounded, continuous function.
Denoting $\psi_{\eps} = \psi + \eps b$ for $0 \leq \eps < 1$ and $\vphi_{\eps} = \psi_{\eps}^*$, one verifies that
$$ \left. \frac{d \vphi_\eps(x)}{d\eps} \right|_{\eps = 0} = -b(\nabla \vphi(x)) $$
at any point $x \in \RR^n$ in which $\vphi$ is differentiable (see, e.g., Berman and Berndtsson \cite[Lemma 2.7]{BB} for a short proof).
Consequently, by the bounded convergence theorem,
\begin{equation}  \left. \frac{d}{d \eps} \left( \int_{\RR^n} \psi_\eps d \mu + \int_{\RR^n} \vphi_\eps d \nu \right) \right|_{\eps =0 } = \int_{\RR^n} b(x) d \mu(x) - \int_{\RR^n} b(\nabla \vphi(x)) d \nu(x). \label{eq_1405} \end{equation}
However, the expression in (\ref{eq_1405}) must vanish according to (\ref{eq_734}).
Recalling that the density of $\nu$ is proportional to $\vphi^{-(n+q)}$, we conclude
that
(\ref{eq_406})
is valid for any bounded, continuous function $b$. Therefore $\mu$ is the $q$-moment measure of $\vphi$.
\end{proof}

Our next inequality is analogous to Theorem 8 from \cite{CK},
and may be viewed as an ``above tangent'' version of the Borell-Brascamp-Lieb inequality.

\begin{proposition}
Let $q > 1$ and let $\mu$ be as in Theorem \ref{thm2}. Suppose that  $\vphi_0: \RR^n \rightarrow (0, \infty)$
is a convex function whose $q$-moment measure is $\mu$. Denote $p = q-1$ and $\psi_0 = \vphi_0^*$.
Then $\psi_0$ is $\mu$-integrable, and for any $\mu$-integrable, proper, convex function $\psi_1: \RR^n \rightarrow \RR \cup \{ + \infty \}$
with $\psi_1(0) < 0$, denoting $\vphi_1 = \psi_1^*$,
$$ \left( \int_{\RR^n} \frac{1}{\vphi_1^{n+p}} \right)^{-2/p} \geq \left( \int_{\RR^n} \frac{1}{\vphi_0^{n+p}} \right)^{-2/p} + \frac{2(n+p)
\int_{\RR^n} \vphi_0^{-(n+p+1)} }{p
\left( \int_{\RR^n} \vphi_0^{-(n+p)} \right)^{\frac{p+2}{p}} }
\int_{\RR^n} (\psi_0 - \psi_1) d \mu. $$
\label{prop_858}
\end{proposition}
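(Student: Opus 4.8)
The plan is to read the claimed inequality as the statement that the convex functional $\psi\mapsto\cI_p^2(\psi)$ lies above its supporting tangent at $\psi_0$, and to make this rigorous by interpolating linearly between $\psi_0$ and $\psi_1$. Write $p=q-1$, let $\vphi_0,\psi_0=\vphi_0^*$ be as in the statement, and set $Z=\int_{\RR^n}\vphi_0^{-(n+q)}=\int_{\RR^n}\vphi_0^{-(n+p+1)}$; by Lemma~\ref{lem_947} the function $\psi_0$ is $\mu$-integrable, and by Lemma~\ref{lem_457} the positive convex function $\vphi_0$ is superlinear, so $\int_{\RR^n}\vphi_0^{-(n+p)}\in(0,\infty)$ and the right-hand side of the claimed inequality is finite. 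Thus if $\cI_p^2(\psi_1)=+\infty$ there is nothing to prove, and we may assume $\cI_p^2(\psi_1)<\infty$, i.e.\ $\int_{\RR^n}\vphi_1^{-(n+p)}\in(0,\infty)$ where $\vphi_1=\psi_1^*$. Moreover, replacing $\psi_1$ by the function equal to $\psi_1$ on a large closed ball $\overline{B(0,M)}$ and to $+\infty$ outside it, and letting $M\to\infty$, affects neither side of the inequality in the limit: the right-hand side is unchanged once $\supp(\mu)\subseteq\overline{B(0,M)}$, while the Legendre transform of this truncation increases to $\vphi_1$, so by monotone convergence its $\cI_p^2$ increases to $\cI_p^2(\psi_1)$. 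Hence we may assume $\dom(\psi_1)$ is bounded. Then $\vphi_1$ is finite, positive and Lipschitz, and since $\dom(\psi_0)$ is automatically bounded (superlinearity of $\vphi_0$ again), for each $t\in[0,1]$ the function $\vphi_t:=((1-t)\psi_0+t\psi_1)^*$ is finite, positive and Lipschitz with $\int_{\RR^n}\vphi_t^{-(n+p)}\in(0,\infty)$.

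Set $h(t)=\cI_p^2((1-t)\psi_0+t\psi_1)$ for $t\in[0,1]$. Because affine interpolation commutes with convex combination, Lemma~\ref{lem_1530}(i)--(ii) shows that $t\mapsto\cI_p((1-t)\psi_0+t\psi_1)$ is non-negative and convex on $[0,1]$, hence so is $h$. The chord inequality for one-variable convex functions gives, for every $t\in(0,1)$,
\[ \cI_p^2(\psi_1)=h(1)\ \ge\ h(0)+\frac{h(t)-h(0)}{t},\qquad h(0)=\cI_p^2(\psi_0). \]
Put $J(t)=\int_{\RR^n}\vphi_t^{-(n+p)}$, so $h=J^{-2/p}$. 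By convexity of $s\mapsto s^{-2/p}$ we have $h(t)-h(0)\ge-\tfrac{2}{p}J(0)^{-2/p-1}(J(t)-J(0))$, and by the elementary inequality $\tfrac{n+p}{a^{n+p+1}}(a-b)\le b^{-(n+p)}-a^{-(n+p)}$ for $a,b>0$ (convexity of $s\mapsto s^{-(n+p)}$, used already in the proof of Lemma~\ref{lemma_936}), applied pointwise with $a=\vphi_t(x),\ b=\vphi_0(x)$ and integrated, $J(0)-J(t)\ge(n+p)\int_{\RR^n}(\vphi_t-\vphi_0)\vphi_t^{-(n+p+1)}$. Combining these, for every $t\in(0,1)$,
\[ \cI_p^2(\psi_1)\ \ge\ \cI_p^2(\psi_0)+\frac{2(n+p)}{p}\,J(0)^{-2/p-1}\int_{\RR^n}\frac{\vphi_t(x)-\vphi_0(x)}{t}\,\vphi_t^{-(n+p+1)}(x)\,dx. \]

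It remains to pass to the limit $t\to0^+$ in that integral. Evaluating the supremum defining $\vphi_t(x)$ at $y=\nabla\vphi_0(x)$ gives $\vphi_t(x)\ge\vphi_0(x)+t(\psi_0-\psi_1)(\nabla\vphi_0(x))$ for a.e.\ $x$, whence pointwise
\[ \frac{\vphi_t(x)-\vphi_0(x)}{t}\,\vphi_t^{-(n+p+1)}(x)\ \ge\ (\psi_0-\psi_1)(\nabla\vphi_0(x))\,\vphi_t^{-(n+p+1)}(x). \]
Using the identity $\psi_0(\nabla\vphi_0(x))=\langle x,\nabla\vphi_0(x)\rangle-\vphi_0(x)$ and the $\mu$-integrability of $\psi_1$ (so $(\psi_0-\psi_1)\circ\nabla\vphi_0$ is finite a.e.), the Lipschitz bound on $\vphi_0$, and a lower bound $\vphi_t(y)\ge\eta+\rho|y|$ valid uniformly in $t\in[0,1]$ — which holds because $(1-t)\psi_0+t\psi_1\le\max(\psi_0,\psi_1)$ stays below a negative constant on a fixed ball around the origin — one checks that $\big|(\psi_0-\psi_1)(\nabla\vphi_0(\cdot))\big|\,\vphi_t^{-(n+p+1)}$ is dominated, uniformly in $t$, by a fixed integrable function, and that $\vphi_t\to\vphi_0$ pointwise. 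Dominated convergence together with the change of variables defining the $q$-moment measure (recall $n+q=n+p+1$) then yields
\[ \liminf_{t\to0^+}\int_{\RR^n}\frac{\vphi_t-\vphi_0}{t}\,\vphi_t^{-(n+p+1)}\ \ge\ \int_{\RR^n}(\psi_0-\psi_1)(\nabla\vphi_0(x))\,\vphi_0^{-(n+p+1)}(x)\,dx\ =\ Z\int_{\RR^n}(\psi_0-\psi_1)\,d\mu. \]
Since $\tfrac{2(n+p)}{p}J(0)^{-2/p-1}Z$ equals the coefficient $\tfrac{2(n+p)\int_{\RR^n}\vphi_0^{-(n+p+1)}}{p\,(\int_{\RR^n}\vphi_0^{-(n+p)})^{(p+2)/p}}$ in the statement, letting $t\to0^+$ in the previous display finishes the proof. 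The main obstacle is this last paragraph: controlling the difference quotients and the weights $\vphi_t^{-(n+p+1)}$ simultaneously and uniformly in $t$ so that the limit may be moved inside the integral, and ensuring $\psi_1\circ\nabla\vphi_0$ is not $-\infty$ on a set of positive measure — which is exactly what the reduction to bounded $\dom(\psi_1)$ in the first paragraph is for.
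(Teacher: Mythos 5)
Your proposal is correct in its main thrust and reaches the conclusion by a genuinely different (and arguably slicker) technical route than the paper, though two of your marginal justifications are misstated. The paper performs two successive reductions (Lemmas~\ref{lem_1432_} and~\ref{lem_1432}) precisely to arrange $\dom(\psi_1)=\dom(\psi_0)$ and $\sup_{\dom(\psi_0)}|\psi_1-\psi_0|=M<\infty$, and the bound $|f|\le M$ is then essential in the Lagrange mean-value step~(\ref{eq_438}), where the error term $C_{n,p,m,M}\,t\,/\vphi_0^{n+p+1}$ yields the dominating function with the \emph{fixed} denominator $\vphi_0^{n+p+1}$. You instead apply the one-sided tangent inequality for $s\mapsto s^{-(n+p)}$ with $a=\vphi_t$ and $b=\vphi_0$, which eliminates the error term entirely, at the price of keeping the $t$-dependent weight $\vphi_t^{-(n+p+1)}$ in the integrand; that price is repaid by the observation that $\psi_t=(1-t)\psi_0+t\psi_1\le\max(\psi_0,\psi_1)\le-\eps$ on a fixed ball around $0$, giving a uniform lower bound $\vphi_t\ge\eta+\rho|\cdot|$ and hence the uniform domination $\vphi_t^{-(n+p+1)}\lesssim\vphi_0^{-(n+p+1)}$. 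With the derivative step replaced by the convexity/chord inequality applied to $h$, you also sidestep the need for the explicit continuity argument for $I$ at $0^+$ that the paper invokes. That being said, you should fix two inaccuracies in the bookkeeping. First, the claim that ``$\dom(\psi_0)$ is automatically bounded (superlinearity of $\vphi_0$ again)'' is false reasoning: if $\vphi_0$ were superlinear, $\psi_0=\vphi_0^*$ would be finite on all of $\RR^n$, the exact opposite of what you need; the correct reason $\dom(\psi_0)$ is bounded (equivalently, $\vphi_0$ is globally Lipschitz) is that $\mu$ is compactly supported and $\nabla\vphi_0$ pushes a measure with everywhere-positive density onto $\mu$. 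Second, your closing sentence blames the ball-truncation for ensuring $\psi_1\circ\nabla\vphi_0$ is finite a.e.\ -- but truncating can only shrink $\dom(\psi_1)$, so it cannot help with this; the finiteness a.e.\ follows from the $\mu$-integrability of $\psi_1$ (via Lemma~\ref{lem_1340}, $\dom(\psi_1)$ contains the interior of $\conv(\supp\mu)$, which contains the range of $\nabla\vphi_0$ up to a null set), as you yourself correctly note earlier in that paragraph. The actual role of the ball-truncation in your argument is to make $\vphi_1=\psi_1^*$ finite on all of $\RR^n$, which is what you need in the upper bound $\vphi_t\le(1-t)\vphi_0+t\vphi_1$ to conclude $\vphi_t\to\vphi_0$ pointwise a.e.
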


We begin the proof of Proposition \ref{prop_858} with two reductions:

\begin{lemma} It suffices to prove Proposition \ref{prop_858} under the additional requirements
that $\dom(\psi_1) \subseteq \dom(\psi_0)$ and that $\psi_1 - \psi_0$ is bounded from below on $\dom(\psi_0)$.
\label{lem_1432_}
\end{lemma}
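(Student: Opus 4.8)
The plan is to truncate an arbitrary competitor from below by $\psi_0$ itself. Recall first that $\psi_0=\vphi_0^*$ is $\mu$-integrable, by Lemma~\ref{lem_947} (applicable since the $q$-moment measure $\mu$ of $\vphi_0$ is compactly supported); in particular $\psi_0$ is finite $\mu$-almost everywhere, so $\mu$ is concentrated on $\dom(\psi_0)$. This last fact is what will make the reduction lossless.

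Fix a $\mu$-integrable, proper, convex $\psi_1$ with $\psi_1(0)<0$, and for $M>0$ put $\tilde\psi_1^M:=\max\{\psi_1,\ \psi_0-M\}$. Since $\psi_0$ is lower semi-continuous (being a Legendre transform) and $\psi_1$ is proper convex, $\tilde\psi_1^M$ is proper, convex and lower semi-continuous; it is $\mu$-integrable because $|\tilde\psi_1^M|\le|\psi_1|+|\psi_0|+M$; and $\tilde\psi_1^M(0)=\max\{\psi_1(0),\psi_0(0)-M\}<0$ since $\psi_1(0)<0$ and $\psi_0(0)=-\inf\vphi_0<0$. Moreover $\dom(\tilde\psi_1^M)=\dom(\psi_1)\cap\dom(\psi_0)\subseteq\dom(\psi_0)$, and on $\dom(\psi_0)$ one has $\tilde\psi_1^M-\psi_0=\max\{\psi_1-\psi_0,-M\}\ge-M$. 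Hence $\tilde\psi_1^M$ is an admissible competitor of the restricted form, so, assuming Proposition~\ref{prop_858} for such competitors, it holds with $\psi_1$ replaced by $\tilde\psi_1^M$ and $\vphi_1$ replaced by $\tilde\vphi_1^M:=(\tilde\psi_1^M)^*$.

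It remains to transfer this back to $\psi_1$. On the left-hand side, $\tilde\psi_1^M\ge\psi_1$, so by order-reversal of the Legendre transform $\tilde\vphi_1^M\le\vphi_1$, whence $\int(\tilde\vphi_1^M)^{-(n+p)}\ge\int\vphi_1^{-(n+p)}$ and therefore $\left(\int\vphi_1^{-(n+p)}\right)^{-2/p}\ge\left(\int(\tilde\vphi_1^M)^{-(n+p)}\right)^{-2/p}$ (here $\vphi_1^{-(n+p)}$ is integrable because, by Lemma~\ref{lem_1340}, $\dom(\psi_1)$ contains a ball about the origin, so $\vphi_1$ grows at least linearly; if that integral vanishes then the left-hand side equals $+\infty$ and the claim for $\psi_1$ is trivial). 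On the right-hand side, as $M\to\infty$ the functions $\tilde\psi_1^M$ decrease pointwise to $\psi_1$ at $\mu$-almost every point — this uses that $\mu$ lives on $\dom(\psi_0)$, where $\psi_0-M\to-\infty$ — and for $M\ge M_0$ they are bounded above by the $\mu$-integrable function $\tilde\psi_1^{M_0}$ and below by the $\mu$-integrable function $\psi_1$, so $\int\tilde\psi_1^M\,d\mu\to\int\psi_1\,d\mu$ by monotone convergence; consequently the right-hand side of the inequality for $\tilde\psi_1^M$ converges to the right-hand side of the desired inequality for $\psi_1$. Combining the two observations: for every $M\ge M_0$ the fixed quantity $\left(\int\vphi_1^{-(n+p)}\right)^{-2/p}$ dominates the right-hand side of Proposition~\ref{prop_858} evaluated at $\tilde\psi_1^M$, and letting $M\to\infty$ yields Proposition~\ref{prop_858} for $\psi_1$.

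The only genuinely delicate point is the convergence $\int\tilde\psi_1^M\,d\mu\to\int\psi_1\,d\mu$: the reduction is lossless precisely because $\mu$ assigns no mass to $\RR^n\setminus\dom(\psi_0)$, which is forced by the $\mu$-integrability of $\psi_0=\vphi_0^*$. Everything else is a routine verification that $\tilde\psi_1^M$ is an admissible competitor and that truncation moves both sides of the inequality in compatible directions.
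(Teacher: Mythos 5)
Your proof is correct and uses essentially the same construction as the paper: truncate the competitor from below by $\psi_0 - N$, observe that $f_N^*\le\vphi_1$ makes the left-hand side monotone in the right direction, and pass to the limit on the right-hand side using that $\dom(\psi_0)$ has full $\mu$-measure (since $\psi_0$ is $\mu$-integrable by Lemma~\ref{lem_947}) together with the monotone/dominated convergence theorem for the decreasing sequence $f_N\searrow\psi_1$. The only cosmetic differences are that you make explicit the lower semi-continuity check and the trivial case $\int\vphi_1^{-(n+p)}=0$; neither changes the substance.
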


\begin{proof} It follows from Lemma \ref{lem_457} that $\psi_0(0) < 0$.
For $N > 0$ and $x \in \RR^n$ define $f_N(x) = \max \{ \psi_1(x), \psi_0(x) - N \}$.
The functions $\psi_0$ and $\psi_1$ are negative at zero, and hence
$f_N$ is a proper, convex function on $\RR^n$ with $f_N(0) < 0$ and $\dom(f_N) \subseteq \dom(\psi_0)$.
The function $\psi_0$ is $\mu$-integrable according to Lemma \ref{lem_947}.
The $\mu$-integrability of $\psi_0$ and $\psi_1$ implies that $f_N$ is $\mu$-integrable.
Assuming that Proposition \ref{prop_858} is proven under the additional requirement in the formulation of the lemma, we may assert that
\begin{equation}  \left( \int_{\RR^n} \frac{1}{(f_N^*)^{n+p}} \right)^{-2/p} \geq \left( \int_{\RR^n} \frac{1}{\vphi_0^{n+p}} \right)^{-2/p} + \frac{2(n+p)
\int_{\RR^n} \vphi_0^{-(n+p+1)} }{p
\left( \int_{\RR^n} \vphi_0^{-(n+p)} \right)^{\frac{p+2}{p}} }
\int_{\RR^n} (\psi_0 - f_N) d \mu. \label{eq_1607} \end{equation}
All that remains is to prove that
\begin{equation}
\int_{\RR^n} \psi_1 d \mu = \lim_{N \rightarrow \infty} \int_{\RR^n} f_N d \mu \label{eq_1454}
\end{equation}
and
\begin{equation}
\int_{\RR^n} \frac{1}{\vphi_1^{n+p}}  \leq \liminf_{N \rightarrow \infty} \int_{\RR^n} \frac{1}{(f_N^*)^{n+p}}. \label{eq_1507}
\end{equation}
Since $f_N \geq \psi_1$ then $f_N^* \leq \vphi_1$
and $(f_N^*)^{-(n+p)} \geq \vphi_1^{-(n+p)}$. Hence (\ref{eq_1507}) holds trivially.
Note that $f_N \searrow \psi_1$ as $N \rightarrow \infty$ pointwise in $\dom(\psi_0)$.
Since $\psi_0$ is $\mu$-integrable, the set $\dom(\psi_0)$ has a full $\mu$-measure.
Consequently, $f_N(x) \searrow \psi_1(x)$ as $N \rightarrow \infty$ for $\mu$-almost any $x \in \RR^n$.
The monotone convergence theorem implies (\ref{eq_1454}).
\end{proof}

\begin{lemma} It suffices to prove Proposition \ref{prop_858} under the additional requirement
that $\dom(\psi_1) = \dom(\psi_0)$ and that $\psi_1 - \psi_0$ is bounded on $\dom(\psi_0)$.
\label{lem_1432}
\end{lemma}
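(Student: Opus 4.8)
The plan is to bootstrap from Lemma \ref{lem_1432} by an approximation performed on the Legendre-dual side, where convexity is preserved automatically. By Lemma \ref{lem_1432_} we may assume $\dom(\psi_1) \subseteq \dom(\psi_0)$ and $\psi_1 \geq \psi_0 - c_0$ on $\dom(\psi_0)$ for some $c_0 \geq 0$; extending $\psi_1$ by $+\infty$ this gives $\psi_1 \geq \psi_0 - c_0$ on all of $\RR^n$, so $\vphi_1 = \psi_1^* \leq \vphi_0 + c_0$. For $M > 0$ I set
\[
\vphi_1^{(M)} := \max\{ \vphi_1, \, \vphi_0 - M \}, \qquad \psi_1^{(M)} := (\vphi_1^{(M)})^*,
\]
so that $\vphi_1^{(M)}$ is a finite, positive, convex function with $\vphi_0 - M \leq \vphi_1^{(M)} \leq \vphi_0 + c_0$. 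Applying the order-reversing Legendre transform, $\psi_0 - c_0 \leq \psi_1^{(M)} \leq \psi_0 + M$ on $\RR^n$; hence $\dom(\psi_1^{(M)}) = \dom(\psi_0)$ and $\psi_1^{(M)} - \psi_0$ is bounded on $\dom(\psi_0)$.

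I would then check that $\psi_1^{(M)}$ meets the remaining hypotheses of Proposition \ref{prop_858}: it is proper and convex, being the Legendre transform of the proper convex function $\vphi_1^{(M)}$; it is $\mu$-integrable, since it is squeezed between the $\mu$-integrable functions $\psi_0 - c_0$ and $\psi_0 + M$ (the $\mu$-integrability of $\psi_0$ is Lemma \ref{lem_947}); and $\psi_1^{(M)}(0) = -\inf \vphi_1^{(M)} \leq -\inf \vphi_1 < 0$, because $0$ lies in the interior of $\dom(\psi_1)$ with $\psi_1(0) < 0$, which forces $\inf \vphi_1 > 0$ exactly as in the proof of Lemma \ref{lem_242}. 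Thus $\psi_1^{(M)}$ lies in the class for which Proposition \ref{prop_858} holds by Lemma \ref{lem_1432}. Applying it and using $(\psi_1^{(M)})^* = \vphi_1^{(M)}$ yields the asserted inequality with $\vphi_1^{(M)}, \psi_1^{(M)}$ in place of $\vphi_1, \psi_1$; the multiplicative constant on the right depends only on $\vphi_0$ and so is precisely the one we want.

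Finally I would let $M \to \infty$. As $M$ grows, $\vphi_1^{(M)} \searrow \vphi_1$ pointwise, hence $\psi_1^{(M)} \nearrow \psi_1$. For the left-hand side, monotone convergence gives $\int (\vphi_1^{(M)})^{-(n+p)} \nearrow \int \vphi_1^{-(n+p)}$, and this limit is a finite positive number: $\dom(\psi_1) \subseteq \dom(\psi_0)$ is bounded since $\vphi_0(x) \geq \alpha + \beta|x|$ by Lemma \ref{lem_457}, and $\psi_1(0) < 0$, so Lemma \ref{lem_242} applies with $t = 0$. Hence $t \mapsto t^{-2/p}$ is continuous at that value and the left-hand side converges to $(\int \vphi_1^{-(n+p)})^{-2/p}$. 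For the right-hand side, $\psi_0 - \psi_1^{(M)} \searrow \psi_0 - \psi_1$ with $\mu$-integrable majorant $\max\{c_0, |\psi_0 - \psi_1|\}$, so $\int (\psi_0 - \psi_1^{(M)})\, d\mu \to \int (\psi_0 - \psi_1)\, d\mu$ by dominated convergence. Passing to the limit in the inequality for $\psi_1^{(M)}$ gives Proposition \ref{prop_858} for $\psi_1$. The only step that is not pure bookkeeping is the appeal to Lemma \ref{lem_242} guaranteeing $\int \vphi_1^{-(n+p)} < \infty$, which is what legitimizes the continuity argument for the $(\cdot)^{-2/p}$ power; I do not anticipate a serious obstacle beyond that.
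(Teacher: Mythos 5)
Your construction is exactly the paper's: you set $\psi_1^{(M)} = \bigl(\max\{\vphi_1,\vphi_0-M\}\bigr)^*$, which is precisely the paper's $g_N$, verify the same properties (proper, convex, same domain as $\psi_0$, bounded difference, $\mu$-integrable, negative at zero), apply the restricted case, and let $M\to\infty$. The only differences are in how the limits are handled: the paper sidesteps the pointwise convergence $\psi_1^{(M)}\nearrow\psi_1$, which you assert but which requires the (true) observation that Legendre transforms of a pointwise decreasing sequence of proper convex functions increase pointwise to the Legendre transform of the limit; instead it just uses the trivial bound $\psi_1\geq g_N$ for the $\mu$-integral and Fatou's lemma rather than monotone convergence for the $\int (g_N^*)^{-(n+p)}$ term, which spares it from needing $\int \vphi_1^{-(n+p)}<\infty$ at all. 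One small slip worth correcting: the linear lower bound $\vphi_0(x)\geq\alpha+\beta|x|$ from Lemma \ref{lem_457} does not imply that $\dom(\psi_0)$ is bounded; it only shows that $\dom(\psi_0)$ contains a ball. The finiteness of $\int\vphi_1^{-(n+p)}$ is more directly obtained by applying Lemma \ref{lem_457} to $\vphi_1$ itself, which is positive, convex, and tends to $+\infty$ at infinity because $\vphi_1\geq\vphi_0-c_0$ after the reduction of Lemma \ref{lem_1432_}.
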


\begin{proof} According to Lemma \ref{lem_1432_}, we may assume that for some $C > 0$,
\begin{equation}  \psi_1(x) + C \geq \psi_0(x)  \qquad \qquad (x \in \RR^n). \label{eq_1601}
\end{equation}
It follows from (\ref{eq_1601}) that for any $N > 0$,
\begin{equation} \vphi_0 - N \leq \max \{ \vphi_1, \vphi_0 - N \} \leq \vphi_0 + C.
\label{eq_1442}
\end{equation}
For $N > 0$, let us  define \begin{equation} g_N = \left( \max \{ \vphi_1, \vphi_0 - N \} \right)^*. \label{eq_1450} \end{equation}
Since $\vphi_0$ is a proper, convex function, it follows from (\ref{eq_1442}) that
$g_N: \RR^n \rightarrow \RR \cup \{ + \infty \} $ is a proper, convex function as well.
It also follows from (\ref{eq_1442}) that $\dom(g_N) = \dom(\psi_0)$ and that $g_N - \psi_0$ is a bounded
function on $\dom(\psi_0)$. The $\mu$-integrability of $\psi_0$, proved in Lemma \ref{lem_947}, implies that $g_N$ is $\mu$-integrable.
We learn from (\ref{eq_1450}) that $g_N(0) \leq \psi_1(0) < 0$.
Assuming that Proposition \ref{prop_858} is proven under the additional requirement in the formulation of this lemma, we may assert that
(\ref{eq_1607}) holds true when $f_N$ is replaced by $g_N$.
All that remains to prove is that
\begin{equation}
\int_{\RR^n} \psi_1 d \mu \geq \limsup_{N \rightarrow \infty} \int_{\RR^n} g_N d \mu \label{eq_1454_}
\end{equation}
and
\begin{equation}
\int_{\RR^n} \frac{1}{\vphi_1^{n+p}}  \leq \liminf_{N \rightarrow \infty} \int_{\RR^n} \frac{1}{(g_N^*)^{n+p}}. \label{eq_1507_}
\end{equation}
Since $\psi_1 \geq g_N$ then (\ref{eq_1454_}) holds trivially.
Since $\dom(\vphi_0) = \RR^n$, it follows from (\ref{eq_1450}) that $$
g_N^* =  \max \{ \vphi_1, \vphi_0 - N \} \stackrel{N \rightarrow \infty}{\longrightarrow} \vphi_1 $$
pointwise in $\RR^n$. Now (\ref{eq_1507_}) follows from Fatou's lemma.
\end{proof}

\begin{proof}[Proof of Proposition \ref{prop_858}] The $\mu$-integrability of $\psi_0$ follows from Lemma \ref{lem_947},
while Lemma \ref{lem_457} implies that $\inf \vphi_0 > 0$.
According to Lemma \ref{lem_1432}, we may assume that $\dom(\psi_0) = \dom(\psi_1)$, and that
\begin{equation}  M = \sup_{\dom(\psi_0)} |\psi_1 - \psi_0| < \infty. \label{eq_1232} \end{equation}
Denote $f(x) = \psi_0(x) - \psi_1(x)$ for $x \in \dom(\psi_0)$ and $f(x) = +\infty$ for $x \not \in \dom(\psi_0)$. Set
 $\psi_t = (1 - t) \psi_0 + t \psi_1$ and $\vphi_t = \psi_t^*$.
 Thus $\dom(\psi_t) = \dom(\psi_0)$ while $\psi_t = \psi_0 - t f$ in the set $\dom(\psi_0)$.
 At any point  $x \in \RR^n$ in which $\vphi_0$ is differentiable, for any $0 \leq t \leq 1$,
\begin{equation} \vphi_t(x) = \psi_t^*(x) = \sup_{y \in \dom(\psi_0)} \left [ \langle x, y \rangle - \psi_0(y) + t f(y) \right]
\stackrel{``y = \nabla \vphi_0(x)"}{\geq}  \vphi_0(x) + t f( \nabla \vphi_0(x)). \label{eq_431}
\end{equation}
Denote $m = \inf \vphi_0$, which is a finite, positive number, thanks to the integrability of $\vphi_0^{-(n+q)}$ and to Lemma \ref{lem_457}.
By the Lagrange mean-value theorem from calculus,  for any $a,b,t \in \RR$ with $0 < t < m / (2 M), a \geq m$ and $|b| \leq M$,
\begin{equation} \frac{1}{t} \left[ \frac{1}{(a + t b)^{n+p}} - \frac{1}{a^{n+p}} \right] = -\frac{n+p}{\xi^{n+p+1}} b \leq -\frac{n+p}{a^{n+p+1}} b +
\frac{C_{n,p,m,M} }{a^{n+p+1}} \cdot t \label{eq_438} \end{equation}
for some $\xi$ between $a$ and $a + t b$, where $C_{n,p,m,M} > 0$ depends only on $n, p, m$ and $M$.
It follows from (\ref{eq_431}) and (\ref{eq_438}) that for any $t \in (0, m/(2M))$,
\begin{align} \label{eq_444}
\frac{1}{t} \int_{\RR^n} & \left[ \frac{1}{\vphi_t^{n+p}} - \frac{1}{\vphi_0^{n+p}} \right]  \leq
\frac{1}{t} \int_{\RR^n} \left[ \frac{1}{(\vphi_0(x) + t f( \nabla \vphi_0(x)) )^{n+p}}  -
\frac{1}{\vphi_0^{n+p}(x)} \right] dx \\ & \leq -(n+p) \int_{\RR^n} \frac{f \circ \nabla \vphi_0}{\vphi_0^{n+p+1}}
+  C t\int_{\RR^n} \frac{1}{\vphi_0^{n+p+1}} \stackrel{t \rightarrow 0^+}{\longrightarrow} -(n+p) \int_{\RR^n} \frac{f \circ \nabla \vphi_0}{\vphi_0^{n+p+1}},
\nonumber \end{align}
where $C = C_{n,p,m,M}$ and we used the facts that $\vphi_0^{-(n+p+1)}$ is integrable and that $f \circ \nabla \vphi_0$ is an $L^{\infty}$-function.
The relation (\ref{eq_1232}) implies that $|\vphi_0(x) - \vphi_1(x)| \leq M$ for all $x \in \RR^n$.
Hence $\dom(\vphi_0) = \dom(\vphi_1) = \RR^n$. Consequently, the function
$$ I(t) = \left( \int_{\RR^n} \frac{1}{\vphi_t^{n+p}} \right)^{-2/p} \qquad \qquad \qquad (0 \leq t \leq 1) $$
satisfies $I(0), I(1) \in [0, +\infty)$. By Lemma \ref{lem_1530}, the function $I$ is the square of
 a non-negative, convex funtion in the interval $[0,1]$. Therefore $I$  is a convex function.
Consequently,  the function $I$ is finite and upper semi-continuous in $[0,1]$, being a convex function in the interval $[0,1]$ which is finite at the endpoints of the interval.
The lower semi-continuity of $I$ at the origin follows from (\ref{eq_444}). Hence $I$ is continuous at the origin,
and by convexity,
\begin{align}  \nonumber I(1) - I(0) & \geq \liminf_{t \rightarrow 0^+} \frac{I(t) - I(0)}{t} \\ & \nonumber = -\frac{2}{p} \left( \int_{\RR^n} \frac{1}{\vphi_0^{n+p}} \right)^{-\frac{p+2}{p}}
\cdot \limsup_{t \rightarrow 0^+} \frac{1}{t} \int_{\RR^n} \left[ \frac{1}{\vphi_t^{n+p}} - \frac{1}{\vphi_0^{n+p}} \right]
\\ &\geq \frac{2(n+p)}{p} \left( \int_{\RR^n} \frac{1}{\vphi_0^{n+p}} \right)^{-\frac{p+2}{p}} \int_{\RR^n} \frac{f \circ \nabla \vphi_0}{\vphi_0^{n+p+1}},
\label{eq_1007} \end{align}
where we used (\ref{eq_444}) in the last passage. The proposition follows from (\ref{eq_1007}) and from the definition of $\mu$ as the $q$-moment measure of $\vphi_0$.
\end{proof}

The proof of Proposition \ref{prop_858}
looks rather different from the transportation proof of Theorem 8 in \cite{CK}.
The main difference is that above we apply the Borell-Brascamp-Lieb inequality
in the form of Lemma \ref{lem_1530}, while in \cite{CK} we essentially reprove the
Pr\'ekopa theorem.

\begin{proof}[Proof of the uniqueness part in Theorem \ref{thm2}]
Assume that $\vphi_0, \vphi_1: \RR^n \rightarrow (0, +\infty)$ are convex functions whose $q$-moment measure is $\mu$.
Our goal is to prove that there exist $\lambda > 0$ and $x_0 \in \RR^n$ such that
\begin{equation} \vphi_0(x) = \lambda \vphi_1( x_0 + x / \lambda ) \qquad \qquad \qquad \text{for} \ x \in \RR^n.
\label{eq_904}
\end{equation}
By Lemma \ref{lem_457}, the integrals $\int_{\RR^n} \vphi_i^{-(n+r)}$ converge for all $r > 0$ and $i =0,1$,
since $\vphi_0$ and $\vphi_1$ possess $q$-moment measures.
Replacing $\vphi_0(x)$ by its dilation $(\lambda \times \vphi_0)(x) = \lambda \vphi_0(x / \lambda)$, we may assume that
\begin{equation} \left( \int_{\RR^n} \frac{1}{\vphi_0^{n+p}} \right)^{-\frac{p+2}{p}} \int_{\RR^n} \frac{1}{\vphi_0^{n+p+1}} =
\left( \int_{\RR^n} \frac{1}{\vphi_1^{n+p}} \right)^{-\frac{p+2}{p}} \int_{\RR^n} \frac{1}{\vphi_1^{n+p+1}}. \label{eq_1053} \end{equation}
Indeed, replacing $\vphi_0$ by  $\lambda \times \vphi_0$ has the effect of multiplying the left-hand side of (\ref{eq_1053}) by $\lambda$,
hence we may select the appropriate dilation of $\vphi_0$ and assume that (\ref{eq_1053}) holds true. Denote $\psi_i = \vphi_i^*$ for $i=0,1$ and set
$$ \psi_{1/2} = (\psi_0 + \psi_1) / 2. $$
It follows from Lemma \ref{lem_457} that $\inf \vphi_i > 0$ for $i=0,1$. Therefore $\psi_i(0) = -\inf \vphi_i < 0$ for $i=0,1$
and consequently $\psi_{1/2}(0) < 0$. Denote $\vphi_{1/2} = \psi_{1/2}^*$.
Lemma \ref{lem_1530} implies that
\begin{equation}
\left( \int_{\RR^n} \frac{1}{\vphi_{1/2}^{n+p}} \right)^{-1/p} \leq \frac{1}{2} \left[
\left( \int_{\RR^n} \frac{1}{\vphi_0^{n+p}} \right)^{-1/p} + \left( \int_{\RR^n} \frac{1}{\vphi_1^{n+p}} \right)^{-1/p} \right]. \label{eq_1532} \end{equation}
According to Lemma \ref{lem_1530}(iii), when equality holds in (\ref{eq_1532}),  there exist $\lambda > 0$ and $x_0 \in \RR^n$ for which (\ref{eq_904}) holds true. All that remains to show is that equality holds in (\ref{eq_1532}).
The functions $\psi_0$ and $\psi_1$ are $\mu$-integrable, according to Lemma \ref{lem_947}.
Hence also $\psi_{1/2} = (\psi_0 + \psi_1)/2$ is $\mu$-integrable. Denote by $\alpha$ the quantity in (\ref{eq_1053}).
Applying
Proposition \ref{prop_858} for $\psi_0$ and $\psi_{1/2}$ we obtain
$$ \left( \int_{\RR^n} \frac{1}{\vphi_{1/2}^{n+p}} \right)^{-2/p} \geq \left( \int_{\RR^n} \frac{1}{\vphi_0^{n+p}} \right)^{-2/p} + \frac{2(n+p)}{p}
\alpha
\int_{\RR^n} (\psi_0 - \psi_{1/2}) d \mu. $$
Applying Proposition \ref{prop_858} for $\psi_1$ and $\psi_{1/2}$ we obtain
$$ \left( \int_{\RR^n} \frac{1}{\vphi_{1/2}^{n+p}} \right)^{-2/p} \geq \left( \int_{\RR^n} \frac{1}{\vphi_1^{n+p}} \right)^{-2/p} + \frac{2(n+p)}{p}
\alpha
\int_{\RR^n} (\psi_1 - \psi_{1/2}) d \mu. $$
Adding these two inequalities, and using $2 \psi_{1/2} = \psi_0 + \psi_1$, we have
\begin{equation} \left( \int_{\RR^n} \frac{1}{\vphi_{1/2}^{n+p}} \right)^{-2/p} \geq \frac{1}{2} \left[ \left( \int_{\RR^n} \frac{1}{\vphi_0^{n+p}} \right)^{-2/p} +
\left( \int_{\RR^n} \frac{1}{\vphi_1^{n+p}} \right)^{-2/p} \right]. \label{eq_930}
\end{equation}
From  (\ref{eq_930}) we deduce that equality holds in (\ref{eq_1532}), because  $\sqrt{(a^2 + b^2) / 2 }\geq (a+b)/2$ for all $a,b \geq 0$.
This completes the proof.\end{proof}

For a smooth function $f: \RR^n \rightarrow \RR$ we write $\nabla^2 f(x)$ for the Hessian matrix of $f$ at the point $x \in \RR^n$.
A smooth function $f: L \rightarrow \RR$ is {\it strongly-convex}, where $L \subseteq \RR^n$ is a convex, open set,
if $\nabla^2 f(x)$ is positive-definite for any $x \in L$.
Suppose that $L \subseteq \RR^n$ is a non-empty, open, bounded, convex set.
We are interested in smooth, convex solutions $\vphi: \RR^n \rightarrow (0, \infty)$ to the equation
with the constraint
\begin{equation}
\left \{ \begin{array}{lr} \det \nabla^2 \vphi = C / \vphi^{n+2} & \text{in} \ \RR^n \\
\nabla \vphi(\RR^n) = L \end{array} \right. \label{eq_1502}
\end{equation}
where $C > 0$ is a positive number. Here, of course, $\nabla \vphi(\RR^n) = \{ \nabla \vphi(x) \, ; \, x \in \RR^n \}$.
Thanks to the regularity theory for optimal transportation developed by Caffarelli \cite{caf1} and Urbas \cite{urbas},
Theorem \ref{thm2} admits the following corollary:

\begin{theorem} Let $L \subseteq \RR^n$ be a non-empty, open, bounded, convex set.
Then there exists a smooth, positive, convex function $\vphi: \RR^n \rightarrow \RR$ solving (\ref{eq_1502})
if and only if the barycenter of $L$ lies at the origin.
Moreover, this convex function $\vphi$ is uniquely determined up to translation and dilation. \label{thm3}
\end{theorem}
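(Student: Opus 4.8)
The plan is to deduce Theorem~\ref{thm3} from the existence/uniqueness of $q$-moment measures (Theorem~\ref{thm2}) in the case $q=2$, applied to $\mu = \Vol_n(L)^{-1}\,1_L\,dx$, and then to invoke the interior regularity theory of Caffarelli~\cite{caf1} and Urbas~\cite{urbas} to upgrade the weak solution so obtained to a classical one.

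\emph{Setup and the ``only if'' direction.} I would first record that $\mu=\Vol_n(L)^{-1}1_L\,dx$ is a compactly-supported Borel probability measure whose barycenter is the centroid of $L$ and with $\conv(\supp(\mu))=\overline L$. If a smooth, positive, convex $\vphi$ solves (\ref{eq_1502}) with some $C>0$, then $\det\nabla^2\vphi = C\vphi^{-(n+2)}>0$ together with convexity forces $\nabla^2\vphi>0$, so $\nabla\vphi$ is injective; the change of variables $y=\nabla\vphi(x)$ then gives $\int_{\RR^n}\vphi^{-(n+2)}=\Vol_n(L)/C<\infty$ and shows that the $2$-moment measure of $\vphi$ has constant density $(CZ_\vphi)^{-1}=\Vol_n(L)^{-1}$ on $\nabla\vphi(\RR^n)=L$, i.e.\ equals $\mu$. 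Proposition~\ref{prop_1049} then yields that the centroid of $L$ is the origin. Conversely, if the centroid of $L$ is the origin, then, since the centroid of a convex body lies in its interior, the hypotheses of Theorem~\ref{thm2} hold for $\mu$ with $q=2$.

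\emph{Existence.} Let $\vphi:\RR^n\to(0,\infty)$ be the convex function with $2$-moment measure $\mu$ given by Theorem~\ref{thm2}; thus $\nabla\vphi$ pushes $Z_\vphi^{-1}\vphi^{-(n+2)}dx$ forward to $\mu$, which says that $\vphi$ solves, in the Alexandrov sense, $\det\nabla^2\vphi=(\Vol_n(L)/Z_\vphi)\,\vphi^{-(n+2)}$ with $\nabla\vphi$ valued in $\overline L$. The structural point is that the \emph{target} set $L$ is convex, which is exactly the hypothesis under which Caffarelli's interior regularity applies; the source density $Z_\vphi^{-1}\vphi^{-(n+2)}$ is not bounded below on all of $\RR^n$, but it is bounded between positive constants on each ball $B_R$ (since $\vphi$ is convex, finite and $\inf\vphi>0$) and the regularity conclusions are local in the domain, while the target density is constant. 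Hence $\vphi$ is strictly convex and lies in $C^{1,\alpha}_{\mathrm{loc}}(\RR^n)$; the right-hand side $(\Vol_n(L)/Z_\vphi)\vphi^{-(n+2)}$ is then locally H\"older and bounded between positive constants on compacts, so Caffarelli's $C^{2,\alpha}$ estimate gives $\vphi\in C^{2,\alpha}_{\mathrm{loc}}$, and a Schauder bootstrap upgrades this to $\vphi\in C^{\infty}(\RR^n)$, with $\nabla^2\vphi>0$ because $\det\nabla^2\vphi>0$ and $\vphi$ is convex. To finish I must prove $\nabla\vphi(\RR^n)=L$. Smoothness and strong convexity make $U:=\nabla\vphi(\RR^n)$ open; continuity of $\nabla\vphi$ and the fact that $\nabla\vphi(x)\in\overline L$ for a.e.\ $x$ give $U\subseteq\overline L$, hence $U\subseteq L$. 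Also $\overline U=\supp(\mu)=\overline L$, and since $\overline{\dom(\vphi^*)}$ equals the closure of the range of $\partial\vphi$, namely $\overline U=\overline L$, we get $\mathrm{int}(\dom(\vphi^*))=L$; so $\vphi^*$ is finite and convex on the open set $L$, whence its subdifferential is locally bounded there. Therefore $U$ is closed in $L$: if $\nabla\vphi(x_k)\to y_0\in L$ then $x_k\in\partial\vphi^*(\nabla\vphi(x_k))$ stays bounded and any subsequential limit $x_0$ satisfies $y_0=\nabla\vphi(x_0)\in U$. As $L$ is connected and $U$ is nonempty, open and closed in $L$, we conclude $U=L$, so $\vphi$ solves (\ref{eq_1502}) with $C=\Vol_n(L)/Z_\vphi$; the dilation $\lambda\times\vphi$ solves it with constant $\lambda^2 C$, so solutions exist for every $C>0$.

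\emph{Uniqueness, and the main difficulty.} Any smooth, positive, convex solution of (\ref{eq_1502}) has $2$-moment measure $\mu$ by the computation in the ``only if'' direction, so uniqueness up to translation and dilation is immediate from the uniqueness clause of Theorem~\ref{thm2}, once one notes that translating and dilating $\vphi$ both preserve the constraint $\nabla\vphi(\RR^n)=L$ (and that dilation rescales $C$). I expect the regularity step to be the only genuine obstacle: one must verify that Caffarelli's hypotheses really apply despite the source density $\vphi^{-(n+2)}$ failing to be bounded below globally (handled by localizing in the domain, using $\inf\vphi>0$), and one must squeeze out the exact identity $\nabla\vphi(\RR^n)=L$ rather than an equality up to a Lebesgue-null set, which is where the local boundedness of $\partial\vphi^*$ on $\mathrm{int}(\dom(\vphi^*))=L$ is used.
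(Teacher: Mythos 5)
Your proposal is correct and takes essentially the same route as the paper: reduce Theorem~\ref{thm3} to Theorem~\ref{thm2} with $q=2$ and $\mu$ the normalized uniform measure on $L$, invoke Caffarelli's interior regularity to upgrade the weak solution to $C^\infty$, differentiate the push-forward identity~(\ref{eq_1509}) to read off~(\ref{eq_1502}), and derive uniqueness from the uniqueness clause of Theorem~\ref{thm2}. The only (minor) divergence is in how you close out the set identity $\nabla\vphi(\RR^n)=L$: the paper notes that for a smooth, strongly-convex $\vphi:\RR^n\to\RR$ the gradient image $\nabla\vphi(\RR^n)$ is automatically \emph{convex and open} (citing Rockafellar, Thm.~26.5, or Gromov) and then lets~(\ref{eq_1509}) force it to equal $L$, whereas you run a clopen-in-the-connected-set argument using local boundedness of $\partial\vphi^*$ on $\mathrm{int}(\dom\vphi^*)=L$; both work. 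Your additional care about why Caffarelli's hypotheses apply (localizing on balls where $\vphi^{-(n+2)}$ is bounded between positive constants, convex target, constant target density) fleshes out what the paper compresses into a single citation (to \cite{caf1} and the appendix of \cite{ADM}).
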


\begin{proof} Let $\mu$ be the uniform measure on $L$, normalized to be a probability measure.
Assume first  that the barycenter of $L$ lies at the origin. 
Then the origin belongs to the interior of $\supp(\mu)$.
Applying Theorem \ref{thm2} with $q = 2$ we obtain a positive convex function $\vphi: \RR^n \rightarrow \RR$
whose $q$-moment measure is $\mu$. That is, for any bounded, continuous function
$b: L \rightarrow \RR$,
\begin{equation}
\int_{L} b(y) d y = C_{L, \vphi} \int_{\RR^n} \frac{b(\nabla \vphi(x))}{\vphi^{n+2}(x)} dx,
\label{eq_1509} \end{equation}
where $C_{L, \vphi} = \Vol_n(L) / \int_{\RR^n} \vphi^{-(n+2)}$.
Caffarelli's regularity theory for optimal transportation  (see \cite{caf1} and the Appendix in  \cite{ADM})
implies  that $\vphi$ is $C^{\infty}$-smooth in $\RR^n$.
It follows from (\ref{eq_1509}) and from the change-of-variables formula that for any $x \in \RR^n$,
\begin{equation} \det \nabla^2 \vphi(x) = \frac{C_{L, \vphi}}{\vphi^{n+2}(x)}.
\label{eq_1120}
\end{equation}
In particular, the Hessian $\nabla^2 \vphi(x)$ is invertible and hence positive-definite for any $x \in \RR^n$.
Since $\vphi: \RR^n \rightarrow \RR$ is a smooth, strongly-convex function, the set $\nabla \vphi(\RR^n)$ is convex and open,
according to Theorem 26.5 in Rockafellar \cite{roc} or to Section 1.2 in Gromov \cite{gro}.
From (\ref{eq_1509}) we obtain that $\nabla \vphi(\RR^n) = L$,
thus $\vphi$ solves (\ref{eq_1502}).

 \medskip Moreover, we claim that the smooth, positive, convex solution $\vphi$ to (\ref{eq_1502}) is uniquely determined up to translation and dilation.
 Indeed, any such solution $\vphi$ is strongly-convex, and consequently $\nabla \vphi$ is a diffeomorphism between $\RR^n$
 and the convex, open set $\nabla \vphi(\RR^n) = L$. From (\ref{eq_1502}) and the change-of-variables formula we thus learn that
 $\mu$ is the $q$-moment measure of $\vphi$ with $q=2$. By Theorem \ref{thm2}, the function $\vphi$ is uniquely determined up to translation and dilation.

 \medskip  In order to prove the other direction of the theorem, assume that $\vphi$ is a smooth, positive, convex solution to (\ref{eq_1502}).
 As explained in the preceding paragraphs, $\mu$ is the $q$-moment measure of $\vphi$, with $q = 2$.
Proposition \ref{prop_1049} now shows that the barycenter of $\mu$ lies at the origin.
\end{proof}

\section{The affine hemisphere equations}
\setcounter{equation}{0}

In this section we review the partial differential equations for affinely-spherical hypersurfaces described by
Tzitz\'eica \cite{Tz1, Tz2}, Blaschke \cite{Bla} and Calabi \cite{Cal2}.
 Recall from Section \ref{sec_intro} the definition of the {\it affine normal line} $\ell_M(y)$
which is a line in $\RR^{n+1}$ passing through the point $y$ of the smooth, connected, locally strongly-convex hypersurface
$M \subset \RR^{n+1}$.
We use $y = (x,t) \in \RR^n \times \RR$ as coordinates in $\RR^{n+1}$.
For a set $L \subseteq \RR^n$
and a function $\psi: L \rightarrow \RR$ denote
$$ \graph_L(\psi) = \left \{ (x, \psi(x)) \, ; \, x \in L \right \} \subseteq \RR^{n} \times \RR = \RR^{n+1}. $$
The affine normal line $\ell_M(y)$  depends on the third order approximation to $M$ near $y$, as shown in the following lemma:

\begin{lemma}
Let $M  \subset \RR^{n+1}$ be a smooth, connected, locally strongly-convex hypersurface. Let $L \subseteq \RR^n$ be an open, convex set containing the origin.
Assume that $U \subseteq \RR^{n+1}$ is an open set such that
$$ M \cap U = \graph_L(\psi) $$
where $\psi: L \rightarrow \RR$ is
 a smooth, strongly-convex function
with $\psi(0) = 0, \nabla \psi(0) = 0$ and $\nabla^2 \psi(0) = \id$. Here, $\id$ is the identity matrix.

\medskip Then for $y_0 = (0,0) \in M$,
the line $\ell_{M}(y_0)$ is the line passing through the point $y_0$ in the direction of the  vector
\begin{equation}  \left( -\left( \nabla^2 \psi(0) \right)^{-1} \cdot \nabla (\log \det \nabla^2 \psi)(0), n+2 \right) \in \RR^n \times \RR = \RR^{n+1}. \label{eq_142}
\end{equation}
\label{lem_1151}
\end{lemma}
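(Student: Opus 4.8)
The plan is to evaluate the barycenter construction of Section~\ref{sec_intro} directly in the given normalized coordinates. Since $\nabla\psi(0)=0$, the tangent space $H=T_{y_0}M$ is the horizontal hyperplane $\RR^n\times\{0\}$, and since $\psi$ is convex the convex side of $M$ at $y_0$ is the upper side, so I may take $v=(0,\dots,0,1)$. Then $H+tv=\RR^n\times\{t\}$ and, for small $t>0$, $M_t=M\cap(H+tv)=\{(x,t)\,:\,\psi(x)=t\}$ encloses the convex body whose barycenter is $b_t=(\bary\,\Delta_t,\,t)$, where $\Delta_t=\{x\in L\,:\,\psi(x)\le t\}\subseteq\RR^n$. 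Hence the affine normal direction is $\frac{d}{dt}b_t|_{t=0}=(\frac{d}{dt}\bary\,\Delta_t|_{t=0},\,1)$. Because a line is determined by its direction only up to a nonzero scalar, and because $\partial_k(\log\det\nabla^2\psi)(0)=\mathrm{tr}\big((\nabla^2\psi(0))^{-1}\partial_k\nabla^2\psi(0)\big)=\sum_i\partial_i\partial_i\partial_k\psi(0)$ so that $w:=(\sum_i\partial_i\partial_i\partial_k\psi(0))_{k}$ equals $(\nabla^2\psi(0))^{-1}\nabla(\log\det\nabla^2\psi)(0)$ under our normalization $\nabla^2\psi(0)=\id$, it suffices to show $\frac{d}{dt}\bary\,\Delta_t|_{t=0}=-\frac{1}{n+2}\,w$.

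To compute this I would rescale. Write the Taylor expansion $\psi(x)=\frac12|x|^2+\frac16 T(x,x,x)+O(|x|^4)$, where $T$ is the symmetric trilinear form with $T(e_i,e_j,e_k)=\partial_i\partial_j\partial_k\psi(0)$. Substituting $x=\sqrt{2t}\,u$ turns $\Delta_t$ into a set $D_t\subseteq\RR^n$ whose boundary, written in polar coordinates $u=r\theta$ with $\theta\in S^{n-1}$, is the graph of a radial function $\rho_t(\theta)$ determined by $\psi(\sqrt{2t}\,\rho\theta)=t$; solving this equation gives $\rho_t(\theta)=1-\frac{\sqrt{2t}}{6}\,T(\theta,\theta,\theta)+O(t)$, uniformly in $\theta$ (the implicit function theorem applies with uniform bounds since $\psi$ is smooth and strongly convex near the origin). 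Integrating in polar coordinates and using that $\int_{S^{n-1}}\theta_k\,d\theta=0$ and $\int_{S^{n-1}}T(\theta,\theta,\theta)\,d\theta=0$ by oddness, one obtains $\int_{D_t}u_k\,du=-\frac{\sqrt{2t}}{6}\int_{S^{n-1}}\theta_k\,T(\theta,\theta,\theta)\,d\theta+O(t)$ and $\Vol_n(D_t)=\Vol_n(B)+O(t)$, where $B$ denotes the Euclidean unit ball; therefore $\bary\,D_t=-\frac{\sqrt{2t}}{6\,\Vol_n(B)}\big(\int_{S^{n-1}}\theta_k\,T(\theta,\theta,\theta)\,d\theta\big)_{k}+O(t)$, and then $\bary\,\Delta_t=\sqrt{2t}\,\bary\,D_t=-\frac{t}{3\,\Vol_n(B)}\big(\int_{S^{n-1}}\theta_k\,T(\theta,\theta,\theta)\,d\theta\big)_{k}+O(t^{3/2})$.

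It remains to identify the fourth moment of the uniform measure on $S^{n-1}$: there is a constant $c_n>0$ with $\int_{S^{n-1}}\theta_i\theta_j\theta_k\theta_l\,d\theta=c_n(\delta_{ij}\delta_{kl}+\delta_{ik}\delta_{jl}+\delta_{il}\delta_{jk})$, and a short computation (for instance from the Dirichlet integral $\int_{S^{n-1}}\theta_1^4\,d\theta=3\,\Vol_{n-1}(S^{n-1})/(n(n+2))$, or by comparison with a Gaussian) gives $c_n=\Vol_n(B)/(n+2)$. Contracting against the fully symmetric form $T$ yields $\int_{S^{n-1}}\theta_k\,T(\theta,\theta,\theta)\,d\theta=3c_n\,w_k=\frac{3\,\Vol_n(B)}{n+2}\,w_k$. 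Substituting, $\bary\,\Delta_t=-\frac{t}{n+2}\,w+O(t^{3/2})$, which in particular shows $b_t$ is differentiable at $t=0$ with $\frac{d}{dt}\bary\,\Delta_t|_{t=0}=-\frac{w}{n+2}$. Hence $\frac{d}{dt}b_t|_{t=0}=\big(-\frac{w}{n+2},\,1\big)=\frac{1}{n+2}\big(-w,\,n+2\big)$, which is a nonzero multiple of the vector in (\ref{eq_142}); in particular that vector is nonzero and the affine normal line is well defined, as claimed.

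\textbf{Main obstacle.} The computation is elementary; the one genuinely load-bearing input is the value $c_n=\Vol_n(B)/(n+2)$ of the fourth spherical moment, which is precisely what produces the constant $n+2$ in (\ref{eq_142}). The only point that requires some care is the uniform-in-$\theta$ control of the remainder in the expansion of $\rho_t(\theta)$ (equivalently, of $\Vol_n(D_t)$ and of $\int_{D_t}u_k\,du$ as functions of $t$), needed to justify differentiating $\bary\,\Delta_t$ at $t=0$.
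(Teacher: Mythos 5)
Your proof is correct and follows essentially the same route as the paper's: Taylor-expand $\psi$ to third order, rescale the level sets by $\sqrt{t}$, pass to polar coordinates, and reduce the barycenter derivative to a fourth moment of the uniform measure on $S^{n-1}$, which produces the constant $n+2$. The only cosmetic difference is that you evaluate the spherical fourth moment directly via the isotropic-tensor identity, whereas the paper converts to Gaussian moments first; otherwise the computation is the same.
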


\begin{proof}
The vector $v = (0,1) \in \RR^n \times \RR$ is pointing to the convex side of $M$ at the point $y_0$.
The tangent space to $M$ at the point $y_0$ is $H = T_{y_0} M = \{ (x,0) \, ; \, x \in \RR^n \}$.
For a sufficiently small $t > 0$, the section $M_t = M \cap (H + t v)$ encloses
an $n$-dimensional convex body $\Omega_t \subset H + t v$ given by
$$ \Omega_{t} =  \left \{ (x,t) \in \RR^{n} \times \RR \, ; \,  \psi(x) \leq t \right \}. $$
Denote $a_{ijk} = \partial^{ijk} \psi(0) = \frac{\partial^3 \psi}{\partial x_i \partial x_j \partial x_k} (0) $.
By Taylor's theorem, for a sufficiently small $t > 0$,
\begin{align*}
 \Omega_{t} =
\left \{ (x,t) \in \RR^{n} \times \RR \, ; \,  \frac{|x|^2}{2} + \frac{1}{6} \sum_{i,j,k=1}^n a_{ijk} x_i x_j x_k + O(|x|^4) \leq t \right \}, \end{align*}
where $O(|x|^4)$ is an abbreviation for an expression that is bounded in absolute value by $C |x|^4$,
where $C$ depends only on $M$. By using to the spherical-coordinates representation of $\Omega_t$, we see that for a sufficiently small $t >0$,
$$ \frac{\Omega_{t/2}}{\sqrt{t}} = \left\{ \left(r \theta, \sqrt {t}/2 \right) \, ; \, \theta \in S^{n-1}, 0 \leq r \leq r_t(\theta) = 1 - \frac{\sum_{i,j,k=1}^n a_{ijk} \theta_i \theta_j \theta_k}{6} \sqrt{t} + O(t) \right \}, $$
where $t^{-1/2}  \cdot \Omega_{t/2} = \{ y / \sqrt{t} \, ; \, y \in \Omega_{t/2} \}$.
Consequently, the barycenter satisfies $\bary(\Omega_{t/2}) = (x_t, t/2)$ for
$$  x_t = \sqrt{t} \frac{n \int_{S^{n-1}} \theta \, r_t(\theta)^{n+1} d \theta }{(n+1) \int_{S^{n-1}} r_t(\theta)^{n} d \theta} = -t \cdot \frac{n}{6} \cdot \int_{S^{n-1}} \theta \left( \sum_{i,j,k=1}^n a_{ijk} \theta_i \theta_j \theta_k \right) d \sigma_{n-1}(\theta) + O(t^{3/2}), $$
where $\sigma_{n-1}$ is the uniform probability measure on $S^{n-1}$. Let $X = ( X_1,\ldots,X_n)$ be a standard Gaussian random vector in $\RR^n$,
and recall that $\EE X_i^2 = 1$ and $\EE X_i^4 = 3$ for all $i$. For any homogenous polynomial $p$ of degree $4$ in $n$ real variables, we know that $\EE p(X) = n (n+2) \int_{S^{n-1}} p(\theta) d \sigma_{n-1}(\theta)$.
Hence,
\begin{equation*}  \bary(\Omega_{t/2}) = \left(   -t \frac{n}{6 n (n+2) } \EE X \left[ \sum_{i,j,k=1}^n a_{ijk} X_i X_j X_k \right]  + O(t^{3/2}), t/2 \right).
 \end{equation*}
Consequently, the line $\ell_{M}(y_0)$ is in the direction of  the vector \begin{equation*}  \left( -\EE X \left[ \sum_{i,j,k=1}^n \partial^{ijk} \psi(0) X_i X_j X_k \right], 3 (n+2) \right) = \left( -3 \nabla (\Delta \psi)(0), 3 (n+2)  \right), \end{equation*}
 where $\Delta \psi = \sum_{i=1}^n \partial^{ii} \psi$.
Since $\nabla^2 \psi(0) = \id$, we see that $\nabla (\Delta \psi)(0) = \left( \nabla^2 \psi(0) \right)^{-1} \cdot \nabla (\log \det \nabla^2 \psi)(0)$, and the lemma is proven.
 \end{proof}

Suppose
that $V$ is a finite-dimensional linear space over $\RR$, and let $\psi: V \rightarrow \RR$ be a smooth, strongly-convex function.
In general it is impossible to identify a specific vector in $V$ as the gradient
of the function $\psi$ at the origin,
unless we introduce additional structure such as a scalar product.
Nevertheless, a simple and useful observation is that the vector
\begin{equation}
  \left( \nabla^2 \psi(0) \right)^{-1} \cdot \nabla \left( \log \det \nabla^2 \psi \right)(0)
\label{eq_409_}
   \end{equation}
is a well-defined vector in $V$. This means that for any scalar product that one may introduce in $V$,
we may compute the expression in (\ref{eq_409_}) relative to this scalar product, and the result will always
be the same vector in $V$.

\begin{lemma} Let $M \subset \RR^{n+1}$ be a hypersurface and let $L \subseteq \RR^n$ be a non-empty, open, convex set.  Suppose that
$\psi: \RR^n \rightarrow \RR \cup \{ +\infty \}$ is a proper,
convex function whose restriction to the set $L$ is finite, smooth and strongly convex. Denote
$\Lambda(x) = \log \det \nabla^2 \psi(x)$ for $x \in L$. Assume that
$$ M = \graph_L(\psi). $$
Let $x_0 \in L$ and denote $y_0 = (x_0, \psi(x_0)) \in M$. Then the affine normal line
$\ell_{M}(y_0) \subseteq \RR^{n+1}$ is the line passing through the point $y_0 \in \RR^{n+1}$ in the direction of the vector
\begin{equation} \left( -\left( \nabla^2 \psi \right)^{-1} \nabla \Lambda, n+2 -  \left \langle \left( \nabla^2 \psi \right)^{-1} \nabla \Lambda, \nabla \psi \right \rangle \right) \in \RR^n \times \RR =\RR^{n+1}, \label{eq_247}
\end{equation}
where all expressions are evaluated at the point $x_0$.
\label{prop_1242}
\end{lemma}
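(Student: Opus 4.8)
The plan is to reduce the general statement to the normalized situation treated in Lemma \ref{lem_1151} by means of an invertible affine change of coordinates in $\RR^{n+1}$, exploiting the affine invariance of the affine normal line. Set $P = \nabla^2 \psi(x_0)$, which is a positive-definite symmetric matrix since $\psi$ is strongly convex on $L$, and write $b = \nabla \psi(x_0)$, $c = \psi(x_0)$. First I would introduce the invertible affine map
\[
T : \RR^n \times \RR \rightarrow \RR^n \times \RR, \qquad T(x,t) = \bigl( P^{1/2}(x - x_0),\ t - c - \langle b, x - x_0 \rangle \bigr),
\]
where $P^{1/2}$ is the positive-definite symmetric square root of $P$. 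Applying $T$ to $M = \graph_L(\psi)$ gives $T(M) = \graph_{\tilde L}(\tilde \psi)$, where $\tilde L = P^{1/2}(L - x_0)$ is an open, convex set containing the origin and
\[
\tilde \psi(\xi) = \psi\bigl(x_0 + P^{-1/2}\xi\bigr) - c - \bigl\langle b, P^{-1/2}\xi \bigr\rangle \qquad (\xi \in \tilde L).
\]
A routine differentiation gives $\tilde \psi(0) = 0$, $\nabla \tilde \psi(0) = P^{-1/2} b - P^{-1/2} b = 0$, and $\nabla^2 \tilde \psi(0) = P^{-1/2} \nabla^2 \psi(x_0) P^{-1/2} = \id$; moreover $\tilde\psi$ is smooth and strongly convex on $\tilde L$, so $\tilde M := T(M) = \graph_{\tilde L}(\tilde\psi)$ is a smooth, connected, locally strongly-convex hypersurface meeting $U := \RR^{n+1}$ exactly in $\graph_{\tilde L}(\tilde\psi)$. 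Thus Lemma \ref{lem_1151} applies, with its distinguished point equal to $T(y_0) = (0,0)$.

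Next I would invoke Lemma \ref{lem_1151}: since $\nabla^2 \tilde \psi(0) = \id$, the affine normal line $\ell_{T(M)}(T(y_0))$ passes through $(0,0)$ in the direction of the vector $w = \bigl( -\nabla(\log \det \nabla^2 \tilde\psi)(0),\ n+2 \bigr)$. To evaluate the first coordinate, note that $\nabla^2 \tilde\psi(\xi) = P^{-1/2} \nabla^2 \psi(x_0 + P^{-1/2}\xi) P^{-1/2}$, hence $\log \det \nabla^2 \tilde\psi(\xi) = \Lambda(x_0 + P^{-1/2}\xi) - \log \det P$, and differentiating at $\xi = 0$ yields $\nabla(\log \det \nabla^2 \tilde\psi)(0) = P^{-1/2} \nabla \Lambda(x_0)$. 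Therefore $w = \bigl( -P^{-1/2} \nabla \Lambda(x_0),\ n+2 \bigr)$.

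Finally I would transfer the conclusion back to $M$. Since the construction of the affine normal line in Section \ref{sec_intro} commutes with invertible affine maps — barycenters do, and a reparametrization of the parameter $t$ in that construction only rescales $\frac{d}{dt} b_t|_{t=0}$ by a positive factor, leaving the direction of the line unchanged — we have $\ell_M(y_0) = T^{-1}\bigl( \ell_{T(M)}(T(y_0)) \bigr)$. Writing $T_0$ for the linear part of $T$, this is the line through $y_0$ in the direction $T_0^{-1}(w)$. A short computation with $T_0^{-1}(\xi, s) = \bigl( P^{-1/2}\xi,\ s + \langle P^{-1/2} b, \xi \rangle \bigr)$ applied to $w$ gives first coordinate $-P^{-1} \nabla \Lambda(x_0) = -(\nabla^2 \psi(x_0))^{-1} \nabla \Lambda(x_0)$ and last coordinate $n + 2 - \langle P^{-1} \nabla \Lambda(x_0), b \rangle = n + 2 - \bigl\langle (\nabla^2 \psi(x_0))^{-1} \nabla \Lambda(x_0), \nabla \psi(x_0) \bigr\rangle$, which is precisely the vector in (\ref{eq_247}).

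The only parts requiring care beyond the three elementary computations (the derivatives of $\tilde\psi$, the derivative of $\log\det\nabla^2\tilde\psi$, and the linear algebra for $T_0^{-1}$) are the verification that $T$ carries a neighborhood of $y_0$ in $M$ onto a neighborhood of $(0,0)$ in $\graph_{\tilde L}(\tilde\psi)$ of the kind required by Lemma \ref{lem_1151}, and the invocation of the affine invariance of the affine normal line. Neither presents a genuine obstacle: the first is immediate from $x_0 \in L$ being interior, and for the second one may simply cite the affine invariance asserted in Section \ref{sec_intro}, noting additionally that $T$ sends the convex side of $M$ to the convex side of $T(M)$ — a point that is in any case irrelevant, since only the direction of a line, not its orientation, enters the statement.
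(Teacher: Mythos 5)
Your proposal is correct and takes essentially the same route as the paper: reduce to Lemma \ref{lem_1151} via affine invariance of the affine normal line. The only packaging difference is that the paper splits the reduction into two steps --- first treating the case $\nabla\psi(x_0)=0$ by invoking the earlier observation that the vector $\left(\nabla^2\psi\right)^{-1}\nabla\Lambda$ is independent of the choice of Euclidean structure (so one may ``choose coordinates'' with $\nabla^2\psi(x_0)=\id$), and then handling general $\nabla\psi(x_0)$ by the shear $(x,t)\mapsto(x,t-\langle x,\nabla\psi(x_0)\rangle)$ --- whereas you compose translation, shear, and conjugation by $P^{1/2}$ into a single explicit affine map $T$ and compute $T_0^{-1}(w)$ directly. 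Both rely on the same affine invariance, and your three elementary computations (for $\nabla\tilde\psi(0)$, $\nabla^2\tilde\psi(0)$, $\nabla\log\det\nabla^2\tilde\psi(0)$, and the linear algebra of $T_0^{-1}$) check out.
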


 \begin{proof} Translating, we may assume that $x_0 = 0$ and $\psi(0) = 0$. Consider first the case where also $\nabla \psi(0) = 0$.
In this case, the vector in (\ref{eq_247}) does not depend on the choice of the Euclidean structure in $\RR^n$,
hence we may switch to a Euclidean structure for which $\nabla^2 \psi(0) = \id$. Thus (\ref{eq_247}) follows
from Lemma \ref{lem_1151} in this case.
 In the case where $v := \nabla \psi(0)$ is a non-zero vector,
  we apply the linear map in $\RR^{n+1}$,
 $$ (x,t) \mapsto \left( x, t - \langle x, v \rangle \right). $$ This linear map transforms $M$ to the graph of the convex function $\psi_1(x) =\psi(x) - \langle x, v \rangle$,
 and it transforms the vector in (\ref{eq_247}) to the vector
$$ \left( -\left( \nabla^2 \psi_1(0) \right)^{-1} \cdot \nabla (\log \det \nabla^2 \psi_1)(0), n+2 \right) \in \RR^{n+1}.
$$
Since $\nabla \psi_1(0) = 0$, we have reduced matters to the case already proven.
 \end{proof}

\begin{remark}{\rm The affine normal lines considered in this paper are closely related to the {\it affine normal field} which is
discussed, e.g., by Nomizu and Sasaki \cite[Section II.3]{NS}.
The affine normal field is a certain map $\xi: M \rightarrow \RR^{n+1}$
that is well-defined whenever $M  \subseteq \RR^{n+1}$ is  a smooth, connected, locally strongly-convex hypersurface.
The relation between the affine normal field
and the affine normal line is simple: For any $y \in M$, the affine normal field $\xi_y$ is pointing
in the direction of the affine normal line $\ell_M(y)$. Indeed, using affine-invariance it suffices
to verify this in the case where $M = \graph_L(\psi)$. Example 3.3 in \cite[Section II.3]{NS} demonstrates
that when $M = \graph_L(\psi)$, for any $x \in L$ and $y = (x, \psi(x)) \in M$,
\begin{equation} \xi_y = \frac{(\det \nabla^2 \psi)^{1/(n+2)}}{n+2} \cdot \left( -(\nabla^2 \psi)^{-1} \nabla \Lambda , n+2 - \left \langle (\nabla^2 \psi)^{-1} \nabla \Lambda, \nabla \psi \right \rangle \right)  \in \RR^n \times \RR, \label{eq_924}
\end{equation}
where $\Lambda = \log \det \nabla^2 \psi$ and all expressions involving $\psi$ and $\Lambda$ are evaluated at the point $x$.
The vector in (\ref{eq_924}) is proportional to the vector described in Lemma \ref{prop_1242}, and hence $\xi_y$ is pointing in the
direction of the line $\ell_M(y)$.
}\end{remark}

\begin{proposition}
Let $M, L$ and $\psi$ be as in Lemma  \ref{prop_1242}. Denote $\vphi = \psi^*$ and $\Omega = \nabla \psi(L) = \{ \nabla \psi(x) \, ; \, x \in L \}$.
Then the following hold:
\begin{enumerate}
\item[(i)] The set $\Omega \subseteq \RR^n$ is open and the function $\vphi$ is smooth in $\Omega$.
\item[(ii)] The hypersurface $M$ is affinely-spherical with center at the origin if and only if there exists $C \in \RR \setminus \{ 0 \}$ such that
\begin{equation} \vphi^{n+2} \cdot \det \nabla^2 \vphi = C \qquad \qquad \text{in the entire set} \ \ \Omega. \label{eq_343} \end{equation}
\end{enumerate}
\label{prop_1147_}
\end{proposition}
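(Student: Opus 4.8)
The plan is to dispatch (i) with the inverse function theorem and standard facts about the Legendre transform (Rockafellar~\cite{roc}), and then to reduce the affine-sphericity condition in (ii), through Lemma~\ref{prop_1242}, to a single first-order equation on $\Omega$ that integrates explicitly.

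For (i): since $\psi|_L$ is smooth and strongly convex, the map $\nabla\psi\colon L\to\RR^n$ has everywhere-invertible differential $\nabla^2\psi(x)$ and is injective (if $x_1\ne x_2$ then $\langle\nabla\psi(x_1)-\nabla\psi(x_2),x_1-x_2\rangle>0$), hence is a diffeomorphism of $L$ onto the open set $\Omega$; write $x(\cdot)$ for its inverse. The concave function $z\mapsto\langle y,z\rangle-\psi(z)$, being differentiable at $x(y)\in L$ with zero gradient there, attains its supremum at $x(y)$; thus $\vphi(y)=\psi^*(y)=\langle y,x(y)\rangle-\psi(x(y))$ for $y\in\Omega$, which exhibits $\vphi|_\Omega$ as smooth and, upon differentiating, gives $\nabla\vphi(y)=x(y)$ and $\nabla^2\vphi(y)=\bigl(\nabla^2\psi(x(y))\bigr)^{-1}$; in particular $\vphi|_\Omega$ is strongly convex.

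For (ii): fix $x_0\in L$, put $\eta=\nabla\psi(x_0)\in\Omega$, $y_0=(x_0,\psi(x_0))\in M$, and $u=\bigl(\nabla^2\psi(x_0)\bigr)^{-1}\nabla\Lambda(x_0)$. By Lemma~\ref{prop_1242}, $\ell_M(y_0)$ is the line through $y_0$ in the nonzero direction $w=(-u,\,n+2-\langle u,\nabla\psi(x_0)\rangle)$, so it meets the origin if and only if $y_0=s\,w$ for some $s\in\RR$. The first $n$ coordinates give $x_0=-su$; substituting this into the last coordinate and using the Fenchel identity $\langle x_0,\nabla\psi(x_0)\rangle=\psi(x_0)+\vphi(\eta)$ forces $s=-\vphi(\eta)/(n+2)$, and conversely that value of $s$ satisfies both equations. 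Hence $\ell_M(y_0)$ passes through the origin exactly when $(n+2)x_0=\vphi(\eta)\,u$. Differentiating the relation $\log\det\nabla^2\vphi(\nabla\psi(x))=-\Lambda(x)$ at $x_0$ gives $u=-\nabla\bigl(\log\det\nabla^2\vphi\bigr)(\eta)$, while $x_0=\nabla\vphi(\eta)$ by (i); letting $\eta$ range over $\Omega$, the condition that $M$ be affinely-spherical with center at the origin becomes
\begin{equation*}
(n+2)\,\nabla\vphi+\vphi\,\nabla\bigl(\log\det\nabla^2\vphi\bigr)=0 \qquad\text{on }\Omega.
\end{equation*}

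It remains to integrate this equation. If $\vphi(\eta^*)=0$ at some $\eta^*\in\Omega$, the equation forces $\nabla\vphi(\eta^*)=0$, so $\eta^*$ is the only critical point of the strongly convex $\vphi|_\Omega$ and $\vphi>0$ on $\Omega\setminus\{\eta^*\}$; but there $\vphi^{n+2}\det\nabla^2\vphi$ has vanishing logarithmic gradient, hence is a positive constant on each connected component, and every such component accumulates at $\eta^*$ since $\Omega$ is connected, contradicting the continuity at $\eta^*$ of $\vphi^{n+2}\det\nabla^2\vphi$ (which vanishes there). Thus $\vphi$ never vanishes on the connected set $\Omega=\nabla\psi(L)$ and so has constant sign; dividing the displayed equation by $\vphi$ rewrites it as $\nabla\log\bigl(|\vphi|^{n+2}\det\nabla^2\vphi\bigr)=0$, whence $\vphi^{n+2}\det\nabla^2\vphi\equiv C$ for some $C\ne0$. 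The converse is obtained by reversing the steps: if $\vphi^{n+2}\det\nabla^2\vphi\equiv C\ne0$ on $\Omega$ then $\vphi\ne0$ there, taking the logarithmic gradient recovers the displayed equation, and running the change of variables above backwards shows every $\ell_M(y_0)$ passes through the origin. I expect the main obstacle to be the bookkeeping in (ii): turning ``the line through $y_0$ in direction $w$ hits the origin'' into the pointwise identity $(n+2)x_0=\vphi(\eta)\,u$, and then transporting that identity across the Legendre and Hessian change of variables; the non-vanishing of $\vphi$ on $\Omega$ is only a minor extra point.
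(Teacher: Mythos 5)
Your proof is correct and reaches the same key equation the paper derives, namely (in the paper's Einstein notation) $-\vphi\,\vphi_{ik}^{k}=(n+2)\vphi_i$ on $\Omega$, but by a different route, and it also integrates that equation differently. In part (i) you are doing the same thing as the paper (inverse function theorem plus the Legendre-transform identities).

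In part (ii) the paper starts from the proportionality $(x,\psi(x)) \parallel w$ (its equation labelled as the affine-sphericity condition in $\psi$-coordinates), expands this into a system in Einstein notation, pushes it through the Legendre change of variables, and then needs to divide by $y^j\vphi_j-\vphi=\psi\circ\nabla\vphi$; since this can vanish, the paper works on the open dense set where $\psi\neq 0$ and extends by continuity. You instead parametrize the line explicitly as $y_0=sw$, solve for $s$ via the Fenchel identity, and get the clean equivalence $\ell_M(y_0)\ni 0 \iff (n+2)\,\nabla\vphi(\eta)+\vphi(\eta)\,\nabla\log\det\nabla^2\vphi(\eta)=0$ without any division-by-zero issue at the derivation stage; this is arguably tidier. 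The trade-off appears in the integration: you divide by $\vphi$, which forces you to rule out zeros of $\vphi$ with a separate connectedness argument, whereas the paper multiplies the equation by $\vphi^{n+1}\det\nabla^2\vphi$ to get $\nabla(\vphi^{n+2}\det\nabla^2\vphi)=0$ directly, with no case distinction, and then observes the constant cannot be $0$ since $\det\nabla^2\vphi\neq 0$ and $\vphi\not\equiv 0$. Your zero-set argument is valid (the key facts being that $\vphi=\psi^*$ is globally convex so a vanishing gradient forces a global minimum, strong convexity on $\Omega$ makes the minimizer unique, and each component of $\Omega\setminus\{\eta^*\}$ accumulates at $\eta^*$), but if you want to streamline the write-up, adopt the paper's ``multiply rather than divide'' trick and the whole $\vphi=0$ detour evaporates. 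One small imprecision in your wording: ``and conversely that value of $s$ satisfies both equations'' should read that with $s=-\vphi(\eta)/(n+2)$ the second equation is automatic once the first holds, so the system reduces to the single vector equation $(n+2)x_0=\vphi(\eta)u$; as written it sounds as if both equations hold unconditionally.
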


\begin{proof}
The function $\psi$ is smooth and strongly-convex in the open, convex set $L$.
By strong-convexity, the smooth map $\nabla \psi: L \rightarrow \Omega$
is one-to-one (see, e.g., \cite[Theorem 26.5]{roc}).
Moreover, the differential of the smooth map $\nabla \psi: L \rightarrow \Omega$
is non-singular, and by the inverse function theorem from calculus,
the set $\Omega = \nabla \psi(L)$ is open and the map $\nabla \psi: L \rightarrow \Omega$ is a diffeomorphism.
According to \cite[Corollary 23.5.1]{roc}, the inverse of the map $\nabla \psi$ is the smooth map $\nabla \vphi: \Omega \rightarrow L$, and hence
\begin{equation}  \nabla^2 \vphi = (\nabla^2 \psi)^{-1} \circ \nabla \vphi. \label{eq_1025} \end{equation}
Thus (i) is proven. We move on to the proof of (ii). Assume first that $M$ is affinely-spherical with center at the origin. Then
for any $x \in L$, the vector in (\ref{eq_247}) is proportional to $(x, \psi(x))$. That is, for any $x \in L$,
\begin{equation}
 -\psi(x) \left( \nabla^2 \psi \right)^{-1} \nabla \left(\log \det \nabla^2 \psi \right) = \left[ n+2 -  \left \langle \left( \nabla^2 \psi \right)^{-1} \nabla \left(\log \det \nabla^2 \psi \right), \nabla \psi \right \rangle \right] x. \label{eq_1121} \end{equation}
By using  the shorter Einstein notation we may repharse (\ref{eq_1121}) as follows: for $x \in L$ and $i=1,\ldots,n$,
\begin{equation}  -\psi \psi^{ik}_{k} = \left( n + 2 - \psi^{jk}_{k} \psi_j \right) x^i. \label{eq_1121_}
\end{equation}
Let us briefly explain this standard notation. We denote $x = (x^1,\ldots,x^n) \in \RR^n, \nabla^2 \psi(x) = (\psi_{ij}(x))_{i,j=1,\ldots,n}$
and $(\nabla^2 \psi)^{-1}(x) = (\psi^{ij}(x))_{i,j=1,\ldots,n}$. We abbreviate $\psi_{ij}^{k} = \sum_{\ell=1}^n
\psi^{\ell k} \psi_{ij\ell}$ and $\psi^{ij}_k = \sum_{\ell, m=1}^n \psi^{i \ell} \psi^{j m} \psi_{\ell m k}$, where $\psi_{ijk} = \partial^{ijk} \psi$.
The sums are usually implicit in the Einstein notation: an index which appears twice in an expression, once as a superscript and once as a subscript,
is being summed upon from $1$ to $n$. The Legendre transform fits well with the Einstein notation,
thanks to identities such as
$$ \psi^{ijk}(x) = -\vphi_{ijk}(y) \quad \text{and} \quad \psi^{ij}_k(x) = -\vphi_{ij}^k(y), $$ where
expressions
involving $\psi$ are evaluated at the point $x \in L$
and expressions involving $\vphi$ are evaluated at the point $y = \nabla \psi(x) \in \Omega$.
Here, $(\nabla^2 \vphi)^{-1}(y) = (\vphi^{ij}(y))_{i,j=1,\ldots,n}$ and $\vphi_{ij}^k = \sum_{\ell} \vphi^{\ell k} \vphi_{ij\ell}$.
We may thus change variables  $y = \nabla \psi(x)$, and translate (\ref{eq_1121_}) to the equation: for any $y \in \Omega$ and $i=1,\ldots,n$,
\begin{equation} \left(y^j \vphi_j - \vphi \right) \vphi_{ik}^{k} = \left( n + 2 + \vphi_{jk}^{k} y^j \right) \vphi_i. \label{eq_331} \end{equation}
The function $\psi$ is smooth and strongly convex, hence the set $\{ x \in L \, ; \, \psi(x) \neq 0 \}$
is an open, dense set in $L$. Denote $U = \{ y \in \Omega \, ; \, \psi(\nabla \vphi(y)) \neq 0 \}$, an open, dense set in $\Omega$.
For any $y \in U$ we may define $$ A(y) = \frac{n + 2 + \vphi_{jk}^{k} y^j}{ \left( \sum_{\ell} y^\ell \vphi_\ell \right) - \vphi}. $$ Thus $\vphi_{ik}^k = A \vphi_i$
throughout the set $U$, according to (\ref{eq_331}). Moreover, the following holds in the  set $U$, for $i=1,\ldots,n$:
\begin{equation}  y^j \vphi_j \vphi_{ik}^{k} = A y^j \vphi_j \vphi_i = \vphi_{jk}^{k} y^j \vphi_i. \label{eq_333} \end{equation}
From (\ref{eq_331}) and (\ref{eq_333}), we have
\begin{equation} - \vphi \vphi_{ik}^{k} = ( n + 2 ) \vphi_i.
\label{eq_1050} \end{equation}
The validity of (\ref{eq_1050}) in the dense set $U \subseteq \Omega$ implies by continuity that (\ref{eq_1050})
holds true in the entire open set $\Omega$. By multiplying (\ref{eq_1050}) by $\vphi^{n+1} \cdot \det \nabla^2 \vphi$ we obtain
that in all of $\Omega$,
\begin{equation} \nabla ( \vphi^{n+2} \cdot \det \nabla^2 \vphi  ) = 0. \label{eq_339} \end{equation}
The set $\Omega$ is connected, being the image of the connected set $L$ under a smooth map.
Hence $\det \nabla^2 \vphi \cdot \vphi^{n+2} \equiv C$ in $\Omega$. This constant $C$ cannot be zero
according to (\ref{eq_1025}), because $\det \nabla^2 \vphi$ never vanishes in $\Omega$ and $\vphi$ is not the zero function.
This completes the verification of (\ref{eq_343}). We have thus proven one direction of (ii).
However, all of our manipulations in this proof are reversible: The validity of (\ref{eq_343})
implies the validity of (\ref{eq_1050}), which in turn leads to (\ref{eq_331}) and
eventually to (\ref{eq_1121}).
Hence (\ref{eq_343}) implies that $M$ is affinely-spherical with center at the origin.
\end{proof}

The following proposition is close to the original definition of affinely-spherical hypersurfaces
given by  Tzitz\'eica \cite{Tz1, Tz2}.

\begin{proposition} Let $M  \subset \RR^{n+1}$ be  a smooth, connected,
locally strongly-convex hypersurface. For $y \in M$
write $K_y > 0 $ for the Gauss curvature of $M$ at the point $y$ and denote
$$ \rho_y = \langle y, N_y \rangle $$
where $N_y \in \RR^{n+1}$ is the Euclidean unit normal to $M$ at the point $y$,  pointing to the concave
side of $M$. Then
$M$ is affinely-spherical with center at the origin if and only if there exists $C \in \RR \setminus \{ 0 \}$
such that $\rho_y^{n+2} / K_y = C$ for all $y \in M$. \label{prop_1348}
\end{proposition}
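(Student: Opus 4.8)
The plan is to pass to the local graph picture of Proposition \ref{prop_1147_} and to recognize that in those coordinates the Euclidean quantity $\rho_y^{n+2}/K_y$ coincides, at the dual point, with the left-hand side of (\ref{eq_343}). First I would fix $y_0 \in M$ and apply a rotation of $\RR^{n+1}$ about the origin; this preserves the Euclidean notions $\rho_y, K_y, N_y$ and also preserves the property of being affinely-spherical with center at the origin (the affine normal line is an affine invariant and rotations fix the origin). I would choose the rotation so that the concave-side unit normal to $M$ at $y_0$ becomes $(0,\dots,0,-1)$. Then near $y_0$ the hypersurface $M$ is a graph $\graph_L(\psi)$ over a small bounded open convex set $L \subseteq \RR^n$, with $\psi$ smooth and strongly convex and with the convex side of $M$ lying above the graph; after extending $\psi$ by $+\infty$ outside $\overline L$ it becomes a proper convex function, so $M,L,\psi$ fall under the hypotheses of Proposition \ref{prop_1147_}. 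I set $\vphi = \psi^*$ and $\Omega = \nabla\psi(L)$; by Proposition \ref{prop_1147_}(i), $\Omega$ is open and $\vphi$ is smooth on it.

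Next I would carry out the computation identifying $\rho_y^{n+2}/K_y$ with $\vphi^{n+2}\det\nabla^2\vphi$. For $y = (x,\psi(x))$ with $x \in L$, the concave-side unit normal is $N_y = (\nabla\psi(x),-1)/\sqrt{1+|\nabla\psi(x)|^2}$, so by the Fenchel identity $\langle x, \nabla\psi(x)\rangle - \psi(x) = \vphi(\nabla\psi(x))$ one gets
\[ \rho_y = \langle y, N_y \rangle = \frac{\langle x, \nabla\psi(x)\rangle - \psi(x)}{\sqrt{1+|\nabla\psi(x)|^2}} = \frac{\vphi(\nabla\psi(x))}{\sqrt{1+|\nabla\psi(x)|^2}}. \]
Using the standard formula $K_y = \det\nabla^2\psi(x)\big/(1+|\nabla\psi(x)|^2)^{(n+2)/2}$ for the Gauss curvature of a graph, together with $\det\nabla^2\vphi(\nabla\psi(x)) = 1/\det\nabla^2\psi(x)$ (which follows from (\ref{eq_1025})), I would obtain, with all powers taken with sign,
\[ \frac{\rho_y^{n+2}}{K_y} \;=\; \frac{\vphi(\nabla\psi(x))^{n+2}}{\det\nabla^2\psi(x)} \;=\; \Big(\vphi^{n+2}\det\nabla^2\vphi\Big)(\nabla\psi(x)). \]

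Finally I would assemble the equivalence. Since $\nabla\psi$ maps $L$ onto $\Omega$, the last identity shows that $y \mapsto \rho_y^{n+2}/K_y$ is constant on the portion of $M$ over $L$ if and only if $\vphi^{n+2}\det\nabla^2\vphi$ is constant on $\Omega$; and such a constant is automatically nonzero because $\det\nabla^2\vphi > 0$ on $\Omega$ and $\vphi \not\equiv 0$. By Proposition \ref{prop_1147_}(ii) this is in turn equivalent to all the affine normal lines of $M$ over $L$ meeting the origin. To conclude: if $M$ is affinely-spherical with center at the origin, then near each point $y \mapsto \rho_y^{n+2}/K_y$ equals a constant, so it is locally constant on $M$ and hence globally constant — and nonzero — by connectedness; conversely, if $\rho_y^{n+2}/K_y \equiv C$ for some $C \ne 0$, then every affine normal line of $M$ lies in one of these charts and therefore passes through the origin, so $M$ is affinely-spherical with center at the origin. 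I expect the only real friction to be orientation bookkeeping — making sure that after the rotation the concave-side normal points ``downward'' so that $M$ is the graph of a convex function with its convex side above, exactly the setup of Proposition \ref{prop_1147_} — together with the local-to-global passage via connectedness of $M$ rather than a single global graph; the substantive analytic content is already contained in Proposition \ref{prop_1147_}.
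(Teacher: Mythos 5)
Your proof is correct and follows essentially the same route as the paper's: reduce by connectedness to a local statement, rotate to a graph chart, invoke the standard formulas for $\rho_y$ and $K_y$ of a graph, transfer to the Legendre dual via the Fenchel identity and $(\nabla^2\psi)^{-1}\circ\nabla\psi = \nabla^2\vphi\circ\nabla\psi$, and conclude from Proposition~\ref{prop_1147_}(ii). The only difference is that you spell out the orientation bookkeeping and the $\det\nabla^2\vphi = 1/\det\nabla^2\psi$ step, which the paper leaves implicit.
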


\begin{proof} See Nomizu and Sasaki \cite[Section II.5]{NS} for a proof of this proposition, or alternatively argue as follows:
Since $M$ is connected, it suffices to show that $M$ is affinely-spherical with center at the origin if and only if
the function $y \mapsto \rho_y^{n+2} / K_y$ is locally-constant in $M$ and it never vanishes.

\medskip Fix $y_0 \in M$. By applying a rotation in $\RR^{n+1}$, we may assume that
in a neighborhood of $y_0$, the hypersurface $M$ looks like the graph of a strongly-convex function. That is, we may assume that there exist an
open set $U \subseteq \RR^{n+1}$ with $y_0 \in U$, a convex, open set $L \subseteq \RR^n$ and a proper, convex
function $\psi: \RR^n \rightarrow \RR \cup \{ + \infty \}$ which is finite, smooth and strongly-convex in $L$,
such that
$$ M \cap U = \graph_L(\psi). $$
A standard exercise in differential geometry is to show that for any $x \in L$, at the point $y = (x, \psi(x))$,
\begin{equation}  \rho_y = \frac{\langle x, \nabla \psi(x) \rangle - \psi(x)}{\sqrt{1 + |\nabla \psi(x)|^2}},
\label{eq_1100} \end{equation}
and
\begin{equation}  K_y = \det \nabla^2 \psi(x) \cdot (1 + |\nabla \psi(x)|^2)^{-n/2-1}. \label{eq_1101}
\end{equation}
Denote $\vphi = \psi^*$. From (\ref{eq_1100}) and (\ref{eq_1101}) we obtain that
$$ \frac{\rho_y^{n+2}}{K_y} =\frac{ \left( \langle x, \nabla \psi(x) \rangle - \psi(x) \right)^{n+2}}{\det \nabla^2 \psi(x)} = \vphi^{n+2}(z) \cdot \det \nabla^2 \vphi(z) $$
where $z = \nabla \psi(x)$.  The desired conclusion now follows from Proposition \ref{prop_1147_}.
\end{proof}

\section{The polar affinely-spherical hypersurface}
\label{sec_5}
\setcounter{equation}{0}

In this section we prove Theorem \ref{thm1}. We begin with a variant of a construction in convexity
considered by Artstein-Avidan and Milman \cite{AM} and by Rockafellar \cite[Section 15]{roc}.
Fix a dimension $n$, and denote
$$ \cH^+ = \left \{ (x,t) \in \RR^n \times \RR \, ; \, t > 0 \right \} \subseteq \RR^{n+1},
\qquad \qquad \cH^- = \left \{ (x,t) \in \RR^n \times \RR \, ; \, t < 0 \right \} \subseteq \RR^{n+1}. $$
Consider the fractional-linear transformations $I^+: \cH^+ \rightarrow \cH^-$ and $I^-: \cH^- \rightarrow \cH^+$ defined via
$$ I^+(x, t) = \left( \frac{x}{t}, -\frac{1}{t} \right), \qquad I^-(y, s) = \left( -\frac{y}{s}, -\frac{1}{s} \right). $$
Then $I^+$ is a diffeomorphism whose inverse is $I^-$. A subset $V \subseteq \cH^{\pm}$ is a {\it relative half-space} if
$V =A \cap \cH^{\pm}$ where $A \subseteq \RR^{n+1}$ takes the form
$$  A = \left \{ (x,t) \in \RR^n \times \RR \, ; \, \langle x, \theta \rangle + b t + c \geq 0 \right \} \subseteq \RR^{n+1}
$$
for some $\theta \in \RR^n, b, c \in \RR$. Note that a relative half-space $V \subseteq \cH^{\pm}$ is a relatively-closed
subset of $\cH^{\pm}$. We say that a relative half-space $V \subseteq \cH^{\pm}$ is proper if $V$ and $\cH^{\pm} \setminus V$ are non-empty.

\begin{lemma} The maps $I^+$ and $I^-$  transform relative half-spaces to relative half-spaces.
\label{lem_1139}
\end{lemma}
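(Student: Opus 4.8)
The plan is to verify this by a direct substitution, using the explicit formula for the inverse map $I^-$ of $I^+$. I will carry out the argument for $I^+$; the statement for $I^-$ is entirely analogous, with $\cH^+$ and $\cH^-$ interchanged.

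First I would fix a relative half-space $V = A \cap \cH^+$ of $\cH^+$, with
$$ A = \left\{ (x,t) \in \RR^n \times \RR \, ; \, \langle x, \theta \rangle + b t + c \geq 0 \right\} $$
for suitable $\theta \in \RR^n$ and $b, c \in \RR$. Since $I^+$ is a diffeomorphism of $\cH^+$ onto $\cH^-$ with inverse $I^-$, and $I^-$ maps $\cH^-$ into $\cH^+$, a point $(y,s) \in \cH^-$ lies in $I^+(V)$ if and only if $I^-(y,s) \in A$.

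Next I would substitute $I^-(y,s) = (-y/s, -1/s)$ into the defining inequality of $A$. The condition $I^-(y,s) \in A$ reads
$$ -\frac{\langle y, \theta \rangle}{s} - \frac{b}{s} + c \geq 0 , $$
and multiplying through by $s$, which is negative on $\cH^-$ and therefore reverses the inequality, turns this into the equivalent condition $\langle y, \theta \rangle + b - c s \geq 0$. Hence
$$ I^+(V) = \left\{ (y,s) \in \cH^- \, ; \, \langle y, \theta \rangle + (-c) s + b \geq 0 \right\}, $$
which is exactly a relative half-space of $\cH^-$, with the triple of coefficients $(\theta, -c, b)$ replacing $(\theta, b, c)$. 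The argument for $I^-$ is the same computation with the two half-spaces swapped (and it also shows that the induced map on coefficients is invertible, so $I^+$ and $I^-$ in fact set up mutually inverse correspondences between the relative half-spaces of $\cH^+$ and those of $\cH^-$). The only step where anything can go wrong is the sign reversal when clearing the denominator $s$; apart from that the lemma is immediate.
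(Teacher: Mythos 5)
Your proof is correct and follows essentially the same route as the paper: a direct substitution of the explicit formula for $I^-$ into the linear inequality defining $A$, together with the sign reversal from clearing the negative denominator $s$, yielding the same coefficient change $(\theta,b,c)\mapsto(\theta,-c,b)$ that the paper records. The paper's proof is the same computation stated more compactly.
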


\begin{proof} Let $\theta \in \RR^n, b, c \in \RR$. Then for any subset $V \subseteq \cH^+$,
\begin{align*} V =  \left \{ (x,t) \in \cH^{+} \, ; \, \langle x, \theta \rangle + b t + c \geq 0 \right \}
 \Longleftrightarrow  I^+(V) =
\left \{ (y,s) \in \cH^- \, ; \, \langle y, \theta \rangle - c s + b \geq  0  \right \}. \end{align*}
Hence $V \subseteq \cH^+$ is a relative half-space if and only if $I^+(V) \subseteq \cH^-$ is a relative half-space.
\end{proof}

Any relatively-closed subset $A \subseteq \cH^{\pm}$ which is convex
is the intersection of a family of relative half-spaces in $\cH^{\pm}$.
From Lemma \ref{lem_1139} we conclude
the following:

\begin{corollary} The maps $I^+$ and $I^-$  transform relatively-closed, convex sets to relatively-closed, convex sets.
\label{cor_136}
\end{corollary}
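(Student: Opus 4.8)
The plan is to combine Lemma \ref{lem_1139} with the remark made immediately before the statement --- that every relatively-closed, convex subset of $\cH^{\pm}$ is an intersection of relative half-spaces --- together with the elementary fact that $I^+$ and $I^-$ are mutually inverse bijections between $\cH^+$ and $\cH^-$.

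First I would take a relatively-closed, convex set $A \subseteq \cH^+$ and write $A = \bigcap_{\alpha \in \cA} V_{\alpha}$, where each $V_{\alpha} \subseteq \cH^+$ is a relative half-space (this is the cited representation; the degenerate cases $A = \emptyset$ and $A = \cH^+$ are either covered by allowing an empty index set or checked by inspection). Since $I^+ : \cH^+ \rightarrow \cH^-$ is a bijection, it commutes with arbitrary intersections, so $I^+(A) = \bigcap_{\alpha \in \cA} I^+(V_{\alpha})$. By Lemma \ref{lem_1139} each $I^+(V_{\alpha})$ is a relative half-space in $\cH^-$, hence in particular a relatively-closed, convex subset of $\cH^-$; and an arbitrary intersection of relatively-closed, convex sets is again relatively-closed and convex. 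Therefore $I^+(A)$ is relatively-closed and convex, as desired. The identical argument, with the roles of $\cH^+$ and $\cH^-$ interchanged, shows that $I^-$ sends relatively-closed, convex subsets of $\cH^-$ to relatively-closed, convex subsets of $\cH^+$; since $I^-$ is the inverse of $I^+$, the two maps in fact set up a bijective correspondence between the relatively-closed, convex subsets of $\cH^+$ and those of $\cH^-$.

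There is no real obstacle in this corollary: the only points that warrant a line of care are that applying a bijection to a set is compatible with writing that set as an intersection of a family, and that being ``relatively-closed'' (i.e.\ closed in the subspace topology on $\cH^{\pm}$) is stable under arbitrary intersections --- both entirely routine. The substantive content has already been done in Lemma \ref{lem_1139} and in the half-space representation of relatively-closed convex sets.
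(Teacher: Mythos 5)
Your argument is correct and is precisely the one the paper intends: the sentence preceding the corollary supplies the representation of a relatively-closed convex set as an intersection of relative half-spaces, and Lemma~\ref{lem_1139} together with the bijectivity of $I^{\pm}$ finishes it. The paper leaves the intersection bookkeeping implicit; you have simply spelled it out.
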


Similarly to  Rockafellar \cite[Section 15]{roc}, we say that the set $I^{\pm}(A)$ is the {\it obverse} of the set
$A \subseteq \cH^{\pm}$. See Figure \ref{fig3} for an example of a convex set and its obverse.
The {\it polar body} of a convex subset $S \subseteq \RR^{d}$ is defined via
$$ S^{\circ} = \left \{ x \in \RR^{d} \, ; \, \forall y \in S, \, \langle x, y \rangle \leq 1 \right \}. $$
The set $S^{\circ}$ is always convex, closed and  contains the origin. If $S \subseteq \RR^{d}$
is convex, closed and  contains the origin, then  $(S^{\circ})^{\circ} = S$.
For a subset $S \subseteq \RR^n$ and for a function $F: S \rightarrow \RR \cup \{+ \infty \}$ we write
$$ \epigraph_S(F) = \left \{ (x,t) \in S \times \RR \, ; \, F(x) \leq t \right \} \subseteq \RR^{n+1}. $$
When $S = \RR^n$ we abbreviate $\epigraph(F) = \epigraph_{\RR^n}(F)$.
Note that a function $F: \RR^n \rightarrow \RR \cup \{ + \infty \}$ is proper and convex if and only if
$\epigraph(F)$ is convex, closed and non-empty. The obverse operation interchanges between the Legendre transform
and the polarity transform:

\begin{proposition} Let $\vphi: \RR^n \rightarrow (0, +\infty]$ be a proper, convex function and denote $\psi = \vphi^*$. Then,
\begin{equation}  I^+( \epigraph(\vphi) ) = \epigraph(\psi)^{\circ} \cap \cH^-. \label{eq_932} \end{equation}
Moreover, if $\psi(0) < \infty$ then $\epigraph(\psi)^{\circ} \setminus \cH^- = \{ (x, 0) \, ; \, x \in \dom(\psi)^{\circ} \}
\subseteq \RR^n \times \RR$. \label{lem_1035}
\end{proposition}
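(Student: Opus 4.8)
The plan is to realize both sides of (\ref{eq_932}) as explicit subsets of $\cH^-$ and then to identify them by means of the biconjugation identity $\psi^* = \vphi^{**} = \vphi$. This identity applies because $\psi = \vphi^*$ is a proper, convex function: indeed $\psi(0) = -\inf \vphi$ is finite (in fact $\leq 0$), since $\vphi$ takes values in $(0,+\infty]$ and is not identically $+\infty$.

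First I would unwind the left-hand side. Since $\vphi > 0$, every point of $\epigraph(\vphi)$ has positive last coordinate, so $\epigraph(\vphi) \subseteq \cH^+$ and $I^+$ is defined on it. A generic point $(x',r') \in \epigraph(\vphi)$ satisfies $r' > 0$ and $\vphi(x') \leq r'$, and $I^+(x',r') = (x'/r', -1/r')$; writing $(x,t)$ for this image we get $t < 0$, $r' = -1/t$ and $x' = -x/t$, so that
\begin{equation*}
 I^+(\epigraph(\vphi)) = \left\{ (x,t) \in \cH^- \, ; \, \vphi(-x/t) \leq -1/t \right\}.
\end{equation*}
Then I would unwind the right-hand side. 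By definition $(x,t) \in \epigraph(\psi)^\circ$ iff $\langle x, y\rangle + ts \leq 1$ whenever $\psi(y) \leq s$. When $(x,t) \in \cH^-$, i.e.\ $t < 0$, the left-hand side of this inequality, for fixed $y \in \dom(\psi)$, is largest at $s = \psi(y)$, while for $y \notin \dom(\psi)$ no constraint arises. Hence $(x,t) \in \epigraph(\psi)^\circ \cap \cH^-$ iff $t < 0$ and $\sup_{y \in \RR^n}[\langle x,y\rangle + t\psi(y)] \leq 1$ (extending the supremum to all of $\RR^n$ is harmless, since $t\psi(y) = -\infty$ off $\dom(\psi)$, and $\dom(\psi) \neq \emptyset$ as $\psi$ is proper). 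Setting $r = -1/t > 0$ and pulling $1/r$ out of the supremum turns it into $\tfrac1r \sup_y[\langle rx, y\rangle - \psi(y)] = \tfrac1r \psi^*(rx) = \tfrac1r \vphi(rx)$, so the condition becomes $\vphi(-x/t) \leq -1/t$. This matches the description of the left-hand side, which proves (\ref{eq_932}).

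For the ``moreover'' clause I would argue as follows. If $\psi(0) < \infty$ then $(0,s) \in \epigraph(\psi)$ for every $s \geq \psi(0)$, so any $(x,t) \in \epigraph(\psi)^\circ$ must satisfy $ts \leq 1$ for all $s \geq \psi(0)$, which forces $t \leq 0$. Thus $\epigraph(\psi)^\circ \subseteq \{t \leq 0\}$, and so $\epigraph(\psi)^\circ \setminus \cH^-$ equals $\epigraph(\psi)^\circ \cap \{t = 0\}$. When $t = 0$ the defining condition of $\epigraph(\psi)^\circ$ reduces to $\langle x, y\rangle \leq 1$ for all $y \in \dom(\psi)$, i.e.\ $x \in \dom(\psi)^\circ$, which yields the claimed formula.

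The computations here are routine; the only point that needs care is the passage from the condition ``$\langle x, y\rangle + ts \leq 1$ for all $s \geq \psi(y)$'' to its value at the single point $s = \psi(y)$ (and, in the last paragraph, to the conclusion $t \leq 0$). This is valid precisely because $t < 0$ on $\cH^-$, and it is the sign bookkeeping that explains why the decomposition of $\RR^{n+1}$ into $\cH^-$ and its complement appears in the statement; it is also the only place where a careless argument could go wrong.
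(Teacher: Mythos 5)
Your proof is correct and follows essentially the same route as the paper: both unwind $I^+(\epigraph(\vphi))$ explicitly via the change of variable inverse to $I^+$, rewrite the condition using the scaling identity $(\lambda\psi)^*(y) = \lambda\psi^*(y/\lambda)$ together with biconjugation $\psi^* = \vphi$, and identify this with the defining half-space condition of $\epigraph(\psi)^\circ$ restricted to $\cH^-$, with the sign of $t$ doing exactly the work you highlight. The ``moreover'' argument is likewise the one the paper gives.
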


\begin{proof} Denote $A = \epigraph(\vphi)$ and note that $A \subseteq \cH^+$ because $\vphi$ is positive.
For any $(y, -s) \in \cH^-$,
\begin{equation} (y,-s) \in I^+(A)
 \ \ \Longleftrightarrow \ \ (y/s,1/s) \in A \ \ \Longleftrightarrow \ \ \vphi(y/s) \leq 1/s.
\label{eq_322} \end{equation}
Recall that $(s \psi)^*(y) = s \vphi(y / s)$ for any $y \in \RR^n$ and $s > 0$. By (\ref{eq_322}),
for any $(y,-s) \in \cH^-$,
$$ (y,-s) \in I^+(A) \ \ \Longleftrightarrow \ \ (s \psi)^*(y) \leq 1
\ \ \Longleftrightarrow \ \ \forall x \in \dom(\psi), \ \langle x, y \rangle - s \psi(x) \leq 1. $$
Consequently,
\begin{align} \label{eq_505} I^+(A)  & = \left \{ (y,s) \in \RR^n \times \RR \, ; \, s < 0, \, \langle x, y \rangle + s \psi(y) \leq 1 \ \text{for all} \ x \in \dom(\psi) \right \} \\ & =
\left \{ (y,s) \in \RR^n \times \RR \, ; \, s < 0, \, \langle x, y  \rangle + ts \leq 1 \ \text{for all} \ (x,t) \in \epigraph(\psi)\right \}.
\nonumber
\end{align}
Hence $I^+(A) = \epigraph(\psi)^{\circ} \cap \cH^-$, and (\ref{eq_932}) is proven.
Next, assume that $\psi(0) < \infty$. Then $\epigraph(\psi)$ contains all points of the form $(0, t)$ for $t \geq \psi(0)$.
Therefore, for any $(y,s) \in \epigraph(\psi)^{\circ}$,
$$ \langle 0, y  \rangle + t s \leq 1 \qquad \text{for all} \ t \geq \psi(0), $$
and hence $s \leq 0$. We conclude that $\epigraph(\psi)^{\circ} \setminus \cH^- \subseteq \{ (y,0) \, ; \, y \in \RR^n \}$.
Consequently,
\begin{align*}
& \epigraph  (\psi)^{\circ}  \setminus \cH^-   = \left \{ (y,0) \, ; \, y \in \RR^n, \, \langle x, y  \rangle + t \cdot 0 \leq 1 \ \text{for all} \ (x,t) \in \epigraph(\psi)\right \}
\\ & = \left \{ (y,0) \, ; \, y \in \RR^n, \, \langle x, y  \rangle \leq 1 \ \text{for all} \ x \in \dom(\psi)\right \} = \{ (y, 0) \, ; \, y \in \dom(\psi)^{\circ} \}.
\tag*{\qedhere} \end{align*}
\end{proof}

\begin{figure}
\begin{center}
\begin{tikzpicture}[scale = 2]

\draw[ultra thick, fill=gray, domain=2:4] plot (\x, {1.6 * sqrt(0.1+(\x-3)*(\x-3) / 3)});
\draw[dotted, fill=gray, domain=1.8:2] plot (\x, {1.6 * sqrt(0.1+(\x-3)*(\x-3) / 3)});
\draw[dotted, fill=gray, domain=4:4.2] plot (\x, {1.6 * sqrt(0.1+(\x-3)*(\x-3) / 3)});

\draw[ultra thick, fill=gray, domain=-1.5:-0.5] plot (\x, {- 1.3* sqrt(0.25 - (\x+1)*(\x+1)) });
\draw[ultra thick, fill=gray, domain=-0.5:-1.5] plot (\x, {- 1.3* sqrt(0.25 - (\x+1)*(\x+1)) });
\draw[ultra thick, gray] (-1.4, -0.01) -- (-0.6, -0.01);


\draw [<->] (2,0) -- (4,0);
\draw [<->] (3,-1) -- (3,1);

\draw [<->] (-2,0) -- (0,0);
\draw [<->] (-1,-1) -- (-1,1);

\end{tikzpicture}
\caption{A semi-circle and its obverse, which is a branch of a hyperbola. \label{fig3}}
\end{center}
\end{figure}
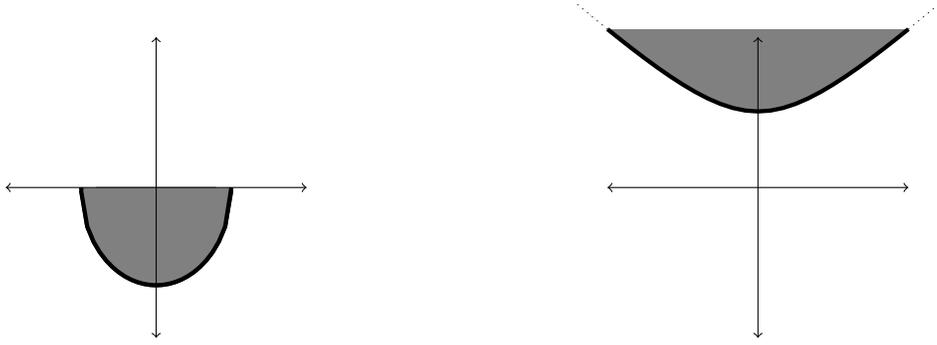

For a subset $A \subseteq \cH^{\pm} \subseteq \RR^{n+1}$ we write $\overline{A} \subseteq \RR^{n+1}$ and $\partial A \subseteq \RR^{n+1}$
for the usual closure and boundary of the set $A$, viewed as a subset of $\RR^{n+1}$.
Similarly, when $A \susbeteq \cH^{\pm} \subseteq \RR^{n+1}$ is convex, we write $A^{\circ}$ for its polar body, where again $A$
is  viewed as a convex subset of $\RR^{n+1}$. When  $A \subseteq \cH^{\pm}$ is relatively-closed, its closure $\overline{A}$ is contained in $\overline{\cH^{\pm}}$, and
$\overline{A} \cap \cH^{\pm} = A$.
Note that the relative boundary of a subset  $A \subseteq \cH^{\pm}$ equals $(\partial A) \cap \cH^{\pm}$.

\begin{lemma}
The two diffeomorphisms $I^{\pm}$ transform smooth,  connected, locally strongly-convex hypersurfaces to smooth,  connected, locally strongly-convex hypersurfaces.
\label{cor_1026}
\end{lemma}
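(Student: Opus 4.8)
The plan is to separate the soft part from the one genuinely geometric point. Since $I^+$ and $I^-$ are mutually inverse diffeomorphisms between the open sets $\cH^+$ and $\cH^-$, each of them carries a smooth, connected $n$-dimensional submanifold of $\RR^{n+1}$ onto a smooth, connected $n$-dimensional submanifold of $\RR^{n+1}$; so the only assertion requiring proof is that local strong convexity is preserved, and by the symmetry $I^- = (I^+)^{-1}$ it suffices to handle $I^+$. Thus let $M \subseteq \cH^+$ be smooth, connected and locally strongly convex, set $M' = I^+(M) \subseteq \cH^-$, fix $y_0 \in M$, and write $y_0' = I^+(y_0)$; the goal is to show that the second fundamental form of $M'$ at $y_0'$ is definite.

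The key observation is that $I^+$ is the restriction to $\cH^+$ of an honest projective transformation of $\RR\PP^{n+1}$. Indeed, in homogeneous coordinates $[x:t:s]$ for which the point $(x,t) \in \RR^{n+1}$ corresponds to $[x:t:1]$, the map $P([x:t:s]) = [x:-s:t]$ is a linear automorphism of $\RR^{n+2}$, it coincides with $I^+$ on $\{t > 0\}$, and it interchanges the projective hyperplane $\{t=0\}$ with the hyperplane at infinity $\{s=0\}$ of this affine chart. I would use this to argue that $I^+$ sends ellipsoids whose closure lies in $\cH^+$ onto ellipsoids whose closure lies in $\cH^-$: if $E \subseteq \cH^+$ is an ellipsoid with $\overline{E} \subseteq \cH^+$, then $\partial E$, regarded in $\RR\PP^{n+1}$, is a compact, non-degenerate quadric disjoint from $\{t=0\}$, hence $P(\partial E)$ is again a compact, non-degenerate quadric, now disjoint from $\{s=0\}$, i.e. a bounded, non-empty quadric hypersurface of $\RR^{n+1}$, which is therefore the boundary of an ellipsoid $E'$; and $E' \subseteq \cH^-$ because $I^+(\cH^+) = \cH^-$.

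Next I would invoke the elementary fact that a smooth hypersurface is locally strongly convex at a point $y_0$ if and only if it has second-order contact at $y_0$ with the boundary of some ellipsoid — one direction because second-order contact forces equality of second fundamental forms, and the other by writing $M$ in coordinates adapted to $T_{y_0}M$ as $\tau = \frac{1}{2}\langle Q\xi,\xi\rangle + O(|\xi|^3)$ with $Q$ definite, and exhibiting, for each small $c > 0$, an ellipsoid $E_c$ with the same $2$-jet at $y_0$ (for instance $\{\langle Q\xi,\xi\rangle/c + (\tau/c - 1)^2 \le 1\}$, or its reflection in the $\tau$-axis when $Q$ is negative definite). Since $E_c$ shrinks to $y_0$ as $c \to 0^+$ and $\cH^+$ is open with $y_0 \in \cH^+$, we get $\overline{E_c} \subseteq \cH^+$ for all sufficiently small $c$. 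Fix such a $c$. Then $M$ has second-order contact with $\partial E_c$ at $y_0$; second-order contact of submanifolds at a common point is a condition on $2$-jets and is therefore preserved by the diffeomorphism $I^+$; hence $M' = I^+(M)$ has second-order contact at $y_0'$ with $I^+(\partial E_c)$, which by the previous paragraph is the boundary of an ellipsoid. Consequently the second fundamental form of $M'$ at $y_0'$ coincides with that of an ellipsoid boundary and is definite. As $y_0 \in M$ was arbitrary, $M'$ is locally strongly convex.

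The one delicate step — and really the crux of the lemma — is the middle paragraph: recognizing $I^+$ as the restriction of a projective transformation and tracking that it exchanges the hyperplane $\{t=0\}$ with the hyperplane at infinity, which is precisely what prevents an ellipsoid sitting inside $\cH^+$ from turning into a hyperboloid or a paraboloid inside $\cH^-$. The supporting claims (local strong convexity $\Leftrightarrow$ osculation by an ellipsoid; realizability of a prescribed definite $2$-jet by an ellipsoid contained in $\cH^+$; functoriality of $2$-jets under diffeomorphisms) are routine, and I would only indicate their proofs; a more computational alternative — representing $M$ locally as a graph and differentiating the defining function of $I^+(M)$ — also works but is heavier.
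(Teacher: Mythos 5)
Your proof is correct, and it takes a genuinely different route from the paper's. Both arguments exploit the fact that $I^\pm$ extends to a projective transformation of $\RR\PP^{n+1}$, but they do so on opposite sides of convex duality. The paper argues \emph{dually}: it notes that $I^\pm$ induces a diffeomorphism of the space of proper relative half-spaces (this is Lemma~\ref{lem_1139}), and asserts that local strong convexity of $M$ is equivalent to the existence, at each point, of a unique locally-supporting relative half-space varying smoothly in $y$ and without critical points --- i.e., the relative Gauss map being a smooth immersion --- a condition that is then manifestly preserved by any diffeomorphism permuting the half-spaces. You argue \emph{primally}: $I^+$ carries ellipsoids with closure in $\cH^+$ to ellipsoids with closure in $\cH^-$ (since the projective extension $P$ swaps $\{t=0\}$ with the hyperplane at infinity, preventing an ellipsoid from degenerating to a hyperboloid or paraboloid), local strong convexity at $y_0$ is equivalent to second-order contact with the boundary of some ellipsoid, and second-order contact is a $2$-jet condition preserved by any diffeomorphism. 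Your version is slightly longer because it requires constructing an osculating ellipsoid small enough to fit inside $\cH^+$, but it has the advantage of replacing the paper's unproved (and somewhat informally phrased) ``varies smoothly and without critical points'' characterization with a self-contained criterion. The paper's version is shorter and sits naturally alongside Lemma~\ref{lem_1139}, which has already produced the half-space correspondence. Both are valid proofs; neither contains a gap.
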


\begin{proof}
Let $M \susbeteq \cH^{\pm}$ be a smooth, connected hypersurface.
A locally-supporting relative half-space at the point $y \in M$ is a proper, relative half-space
$A \subseteq \cH^{\pm}$ with $y \in \partial A$ such that $A \supseteq M \cap U$
 for some open neighborhood $U \subseteq \cH^{\pm}$ of the point $y$.

\medskip A smooth, connected hypersurface $M \subseteq \cH^{\pm}$ is locally strongly-convex
if and only if for any $y \in M$ there is a unique locally-supporting-relative-half-space at the point $y$,
which varies smoothly in $y \in M$ and without critical points.

\medskip The diffeomorphisms $I^{\pm}$
induce a diffeomorphism between the space of proper, relative half-spaces of $\cH^+$ and the space of proper, relative half-spaces of $\cH^-$,
as we see from the proof of Lemma \ref{lem_1139}.  Thus, if $M \subseteq \cH^{\pm}$ is a smooth, connected, locally strongly-convex hypersurface
then the same holds for $I^{\pm}(M)$. The lemma is thus proven.
\end{proof}

We say that a subset $A \subseteq \cH^{\pm}$ is bounded from below if there exists $(x_0, t_0) \in \cH^{\pm}$ such that
$$ t > t_0 \qquad \qquad \qquad \text{for all} \ (x,t) \in A. $$
It is easy to verify that if $A \subseteq \cH^{\pm}$ is bounded from below, then its obverse is also bounded from below.

\begin{lemma} Let $L \subseteq \RR^n$ be a bounded, open, convex set containing the origin.
Let $B \subseteq \cH^-$ be a relatively-closed, convex set that is bounded from below. Assume that the set $(\partial B) \cap \cH^-$ is
a smooth, connected, locally strongly-convex hypersurface, while
$ (\partial B) \setminus \cH^- = \{ (x, 0)  \, ; \, x \in L^{\circ} \}$.

\medskip Then there exists a proper, convex function $\psi: \RR^n \rightarrow \RR \cup \{ + \infty \}$
with $\overline{\dom(\psi)} = \overline{L}$, that is smooth and strongly-convex in $L$,
with $\nabla \psi(L) = \RR^n, \psi(0) < 0$ and $ \overline{B} = \epigraph(\psi)^{\circ}$.
Moreover, $I^-(B) = \epigraph(\vphi)$ where $\vphi = \psi^*$.
\label{prop_1446}
\end{lemma}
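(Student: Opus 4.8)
The plan is to transport $B$ through the obverse map $I^-$, recognize $I^-(B)$ as the epigraph of a convex function $\vphi$, and set $\psi=\vphi^*$; all the required properties of $\psi$ then come from Proposition~\ref{lem_1035} applied to $\vphi$, together with the hypothesis on $(\partial B)\cap\cH^-$.

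First I would set $A=I^-(B)\subseteq\cH^+$. By Corollary~\ref{cor_136}, $A$ is relatively-closed and convex in $\cH^+$, and since $B$ is bounded from below so is its obverse $A$; fix $m>0$ with $t\ge m$ on $A$. In particular $A$ is closed in $\RR^{n+1}$. I claim $A=\epigraph(\vphi)$ for a proper, convex, lower semicontinuous $\vphi\colon\RR^n\to(0,+\infty]$ with $\vphi\ge m$; the only point needing argument is vertical upward-closure of $A$, and this is a one-line computation: if $(x,t)\in A$, say $(x,t)=I^-(y,s)$ with $(y,s)\in B$, and $t'>t$, then $I^-(x/t',-1/t')=\lambda(y,s)+(1-\lambda)(0,0)$ with $\lambda=t/t'\in(0,1)$, which is a point of $\overline B$ lying in $\cH^-$, hence in $B$; so $(x,t')\in A$.

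Now set $\psi=\vphi^*$, so $\vphi=\psi^*$ and $\psi(0)=-\inf\vphi\in[-\infty,-m]$, in particular $\psi(0)<0$. Applying Proposition~\ref{lem_1035} to $\vphi$ gives $I^+(\epigraph(\vphi))=\epigraph(\psi)^\circ\cap\cH^-$; since $I^+=(I^-)^{-1}$ the left-hand side is $B$, so $\epigraph(\psi)^\circ\cap\cH^-=B$, and because $\psi(0)<\infty$ the same proposition gives $\epigraph(\psi)^\circ\setminus\cH^-=\{(x,0)\,;\,x\in\dom(\psi)^\circ\}$ and $\epigraph(\psi)^\circ\subseteq\{t\le0\}$. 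As a closed convex subset of $\{t\le0\}$ that is not contained in $\{t=0\}$ (it meets $\cH^-$), $\epigraph(\psi)^\circ$ equals the closure of its intersection with $\cH^-$, namely $\overline B$; this proves $\overline B=\epigraph(\psi)^\circ$. Comparing the slices at $t=0$ with the hypothesis $(\partial B)\setminus\cH^-=\{(x,0)\,;\,x\in L^\circ\}$ gives $\dom(\psi)^\circ=L^\circ$, and since $L$ is bounded, open and contains $0$ (so $L^\circ$ is compact with $0$ in its interior and $0\in\dom\psi$), the bipolar theorem yields $\overline{\dom(\psi)}=(\dom(\psi)^\circ)^\circ=(L^\circ)^\circ=\overline L$.

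It remains to obtain smoothness, strong convexity, and $\nabla\psi(L)=\RR^n$; this is the step I expect to be the main obstacle, since it is where the hypothesis that $(\partial B)\cap\cH^-$ is a smooth, connected, locally strongly-convex hypersurface is used. From $\overline{\dom\psi}=\overline L$ bounded and $0\in\mathrm{int}(\dom\psi)=L$, the function $\psi$ has a subgradient at $0$, hence is bounded below on its bounded domain, so $\vphi=\psi^*$ is finite on all of $\RR^n$; therefore $\partial A=\{(x,\vphi(x))\,;\,x\in\RR^n\}$, and since $\vphi\ge m>0$ this graph lies in $\cH^+$, so $(\partial A)\cap\cH^+$ is exactly the graph of $\vphi$. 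By Lemma~\ref{cor_1026}, $(\partial A)\cap\cH^+=I^-((\partial B)\cap\cH^-)$ is a smooth, connected, locally strongly-convex hypersurface; being a graph over all of $\RR^n$, this forces $\vphi\in C^\infty(\RR^n)$ with $\nabla^2\vphi>0$ everywhere. Then $\nabla\vphi$ is a diffeomorphism of $\RR^n$ onto an open convex set $\Omega$ (Rockafellar~\cite[Theorem~26.5]{roc}), $\psi|_\Omega$ is its Legendre dual, smooth and strongly convex with $\nabla\psi=(\nabla\vphi)^{-1}\colon\Omega\to\RR^n$, and a standard separation argument gives $\Omega\subseteq\dom\psi\subseteq\overline\Omega$, so $\mathrm{int}(\dom\psi)=\Omega$; combined with $\mathrm{int}(\dom\psi)=L$ this gives $\Omega=L$. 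Hence $\psi$ is smooth and strongly convex on $L$ with $\nabla\psi(L)=\RR^n$ and $\psi(0)=-\inf\vphi<0$, and the ``moreover'' clause holds since $I^-(B)=A=\epigraph(\vphi)$ with $\vphi=\psi^*$.
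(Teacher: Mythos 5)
Your proposal is correct, and the argument is sound, but you enter the circle of ideas from a different node than the paper does. The paper's first move is on the \emph{polar} side: it observes that $B\subseteq\cH^-$ forces $B^\circ$ to be vertically upward-closed, so $B^\circ=\epigraph(\psi)$ can be read off immediately, after which $\psi(0)<0$ follows from the lower bound on $B$ and Proposition~\ref{lem_1035} produces $I^-(B)=\epigraph(\vphi)$ and the identification of $\dom(\psi)^\circ$ with $L^\circ$. You instead make your first move on the \emph{obverse} side: you take $A=I^-(B)$, show directly that $A$ is an epigraph (your convexity trick $\lambda(y,s)+(1-\lambda)(0,0)\in B$, using $(0,0)\in\overline B$, correctly yields the vertical upward-closure of $A$, and closedness of $A$ gives $A=\epigraph(\vphi)$ with the infimum attained), and only then set $\psi=\vphi^*$ and run Proposition~\ref{lem_1035} in the other direction. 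The step where you recover $\overline B=\epigraph(\psi)^\circ$ as the closure of $\epigraph(\psi)^\circ\cap\cH^-$ (a closed convex set in $\{t\le 0\}$ meeting $\cH^-$) is a slightly different route to the same identity the paper gets for free by applying polarity twice, but it is correct. From the smoothness and strong-convexity point on, the two proofs coincide: both pass $(\partial B)\cap\cH^-$ through $I^-$ via Lemma~\ref{cor_1026}, identify the result with $\graph_{\RR^n}(\vphi)$ using the lower bound $\inf\vphi=-\psi(0)>0$, and invoke \cite[Theorem~26.5]{roc} to transfer regularity to $\psi$ on $L=\nabla\vphi(\RR^n)$. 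Neither route is appreciably shorter; the paper's opening is a one-liner while yours requires the small upward-closure computation, but yours makes the role of the ``obverse'' picture more transparent.
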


\begin{proof} Since $B \subseteq \cH^-$, for any $(x, t) \in \RR^n \times \RR$ and $ r > 0$,
$$ (x, t) \in B^{\circ} \quad \Longrightarrow \quad (x, t + r) \in B^{\circ}. $$
Therefore the closed set $B^{\circ}$ satisfies $B^{\circ} = \epigraph(\psi)$ where $\psi: \RR^n \rightarrow \RR \cup \{ + \infty \}$
is defined via $$ \psi(x) = \inf \{ t \in \RR \, ; \, (x,t) \in B^{\circ} \}. $$ Here, $\inf \emptyset = +\infty$. Since $B^{\circ} \subseteq \RR^{n+1}$
is closed, convex and it contains the origin, the function $\psi$ is necessarily proper and convex. The set $\overline{B}$ is closed, convex
and it contains the origin, as follows from our assumptions. Since $\overline{B}^{\circ} = B^{\circ} = \epigraph(\psi)$ while $B \subseteq \cH^-$ is relatively-closed,
\begin{equation}  \overline{B} = \epigraph(\psi)^{\circ} \qquad \text{and} \qquad B = \overline{B} \cap \cH^- = \epigraph(\psi)^{\circ} \cap \cH^-. \label{eq_1143}
\end{equation}
The set $B \subseteq \cH^-$ is bounded from below, hence there exists $t_0 < 0$ such that $t > t_0$ for all $(x,t) \in B$. Therefore $(0, 1 / t_0) \in B^{\circ}$ and thus $\psi(0) < 0$.
Denote $\vphi = \psi^*$. Then $\vphi: \RR^n \rightarrow (0, +\infty]$ is proper and convex. By (\ref{eq_1143}) and Proposition \ref{lem_1035},
\begin{equation}  A := I^-(B) = I^-( \epigraph(\psi)^{\circ} \cap \cH^- ) = \epigraph(\vphi) \label{eq_1806A} \end{equation}
and moreover,
\begin{equation}  (\partial B) \setminus \cH^- = \overline{B} \setminus \cH^- = \epigraph(\psi)^{\circ} \setminus \cH^-  = \{ (x,0) \, ; \, x \in \dom(\psi)^{\circ} \}.
\label{eq_1253} \end{equation}
However, $ (\partial B) \setminus \cH^- = \{ (x, 0)  \, ; \, x \in L^{\circ} \}$ according to our assumptions. From (\ref{eq_1253}) we thus deduce
that  $L^{\circ} = \dom(\psi)^{\circ}$ and $\overline{L} = \overline{\dom(\psi)}$.
Since $\dom(\psi) \subseteq \RR^n$ is bounded and $\vphi = \psi^*$, necessarily \begin{equation} \dom(\vphi) = \RR^n \label{eq_1258} \end{equation} by \cite[Corollary 13.3.3]{roc}.
The map $\cI^-$ is a homeomorphism, and hence it transforms the relative-boundary
of $B \subseteq \cH^-$, which is the set $(\partial B) \cap \cH^-$, to the relative-boundary of $A \subseteq \cH^+$, which is the set $(\partial A) \cap \cH^+$.
Since the relative-boundary $(\partial B) \cap \cH^-$ is a smooth, connected, locally strongly-convex hypersurface,
Lemma \ref{cor_1026} implies that also the hypersurface $$ (\partial A) \cap \cH^+ = I^-( (\partial B) \cap \cH^-) $$
is smooth, connected and locally strongly-convex. Since $\inf \vphi = -\psi(0) > 0$, the relations (\ref{eq_1806A}) and (\ref{eq_1258}) imply
that  $$ (\partial A) \cap \cH^+ = \partial A  = \graph_{\RR^n} (\vphi). $$
 Hence $\graph_{\RR^n}(\vphi)$ is a smooth, connected, locally strongly-convex hypersurface. Consequently $\vphi: \RR^n \rightarrow \RR$ is smooth and strongly-convex.
This implies that the set
$\nabla \vphi(\RR^n)$ is the interior of $\dom(\vphi^*)$ (see, e.g., \cite[Theorem 26.5]{roc} or \cite[Section 1.2]{gro}).
We conclude that  $\nabla \vphi(\RR^n) = L$, and \cite[Theorem 26.5]{roc} shows that the function $\psi = \vphi^*$ is smooth and strongly-convex in $L$
with  $\nabla \psi(L) = \RR^n$. We have thus verified all of the conclusions of the lemma.
\end{proof}

There are two convex epigraphs that are associated with the convex set $B \subseteq \cH^-$
from Lemma \ref{prop_1446}: the obverse of $B$ is $\epigraph(\vphi)$ while the polar of $B$ is $\epigraph(\psi)$. We may 
think about this triplet of convex sets as three different ``coordinate systems'' for describing
the affine hemisphere equation. We will shortly see that $\partial B \cap \cH^-$ is an affine hemisphere centered at the origin
if and only if $\epigraph_L(\psi)$ is affinely-spherical with center at the origin,
which happens if and only if $\vphi$ satisfies $\det \nabla^2 \vphi = C / \vphi^{n+2}$.
Recall that for a smooth hypersurface $M \subseteq \RR^{n+1}$ and $y \in M$, we view the tangent space $T_y M$
as an $n$-dimensional linear subspace of $\RR^{n+1}$.

\begin{definition} Let $M  \subseteq \RR^{n+1}$ be  a smooth,  connected,
locally strongly-convex hypersurface. Assume that $y \not \in T_y M$ for all $y \in M$.
For $y \in M$ define the vector $\nu_y \in \RR^{n+1}$ via the requirements that
$$ \langle \nu_y, y \rangle = 1, \qquad \nu_y \perp T_y M. $$
We refer to $\nu: M \rightarrow \RR^{n+1}$ as the ``polarity map''. We define the ``polar hypersurface'' $M^*$ via
$$ M^{*} := \nu(M) = \left \{ \nu_y \, ; \, y \in M \right \}. $$
\label{def_1502}
\end{definition}

What is the relation between polar hypersurfaces and polar bodies?
If $S \subseteq \RR^{n+1}$ is a convex set
and if  $M \subseteq \partial S$ is a smooth,  connected,
locally strongly-convex hypersurface for which the polarity map is well-defined, then $M^{*} \subseteq \partial S^{\circ}$. Thus, Definition \ref{def_1502}
provides a local version of the theory of convex duality: a piece of the boundary of $S$ is polar to a certain piece of the boundary of $ S^{\circ}$.

\medskip Suppose that $M  \subseteq \RR^{n+1}$ is a smooth,  connected,
locally strongly-convex hypersurface such that $y \not \in T_y M$ for all $y \in M$.
It is well-known that $M^{*}$ is always a smooth,  connected,
locally strongly-convex hypersurface such that  $y \not \in T_y M^{*}$ for all $y \in M^{*}$.
Furthermore, the polarity map $\nu: M \rightarrow M^{*}$
is a diffeomorphism, and its inverse is the polarity map associated with $M^*$. In particular, $(M^{*})^{*} = M$.

\begin{lemma} Let $L \subseteq \RR^n$ be an open, bounded, convex set containing the origin.
Let $\psi: \RR^n \rightarrow \RR \cup \{ + \infty \}$ be a proper, convex function with $\psi(0) < 0$ such that $\overline{L} = \overline{\dom(\psi)}$.
Assume that $\psi$ is smooth and strongly-convex in $L$ with $\nabla \psi(L) = \RR^n$.
Denote $$ M = \graph_L(\psi) \qquad \text{and} \qquad \tilde{K} = \epigraph(\psi)^{\circ}. $$ Then $M^*$ is well-defined, the convex set $\tilde{K}$ is compact
with $\dim(\tilde{K}) = (n+1)$, and
\begin{equation} (\partial \tilde{K}) \cap \cH^- = M^* \qquad \text{while} \qquad (\partial \tilde{K}) \setminus \cH^- = \left \{ (x,0) \, ; \, x \in L^{\circ} \right \}.
\label{eq_1347} \end{equation}
\label{lem_1018}
\end{lemma}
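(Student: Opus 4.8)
The plan is to compute the polar hypersurface $M^{*}$ explicitly, recognise it as the obverse (under $I^{+}$) of the graph of $\vphi = \psi^{*}$, and then read off (\ref{eq_1347}) from Proposition \ref{lem_1035}. First I would record the properties of $\vphi = \psi^{*}$. Since $\dom(\psi) \subseteq \overline{L}$ is bounded, Corollary 13.3.3 in \cite{roc} gives $\dom(\vphi) = \RR^{n}$, so $\vphi:\RR^{n}\to\RR$; moreover $\inf_{\RR^{n}}\vphi = -\vphi^{*}(0) = -\psi^{**}(0) = -\psi(0) > 0$, so $\vphi$ is positive. Applying \cite[Theorem 26.5]{roc} to the smooth, strongly-convex $\psi$ on $L$ with $\nabla\psi(L) = \RR^{n}$, the map $\nabla\psi:L\to\RR^{n}$ is a diffeomorphism with smooth inverse $\nabla\vphi$, and $\vphi$ is smooth and strongly convex with $\nabla\vphi(\RR^{n}) = L$. (These facts also appeared inside the proof of Lemma \ref{prop_1446}.)

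Next I would check that $M^{*}$ is well-defined and identify it. For $x \in L$ and $y = (x,\psi(x)) \in M$ one has $T_{y}M = \{(v,\langle \nabla\psi(x),v\rangle) : v\in\RR^{n}\}$. Writing $\nu_{y} = (a,b) \in \RR^{n}\times\RR$, the requirement $\nu_{y}\perp T_{y}M$ forces $a = -b\,\nabla\psi(x)$, and then $\langle\nu_{y},y\rangle = 1$ reads $-b\big(\langle x,\nabla\psi(x)\rangle - \psi(x)\big) = 1$. Since $\langle x,\nabla\psi(x)\rangle - \psi(x) = \psi^{*}(\nabla\psi(x)) = \vphi(\nabla\psi(x)) > 0$, this yields $b = -1/\vphi(\nabla\psi(x)) \neq 0$; in particular $y \notin T_{y}M$, so $M^{*}$ is well-defined, and
\[ \nu_{y} = \Big(\frac{\nabla\psi(x)}{\vphi(\nabla\psi(x))},\, -\frac{1}{\vphi(\nabla\psi(x))}\Big) = I^{+}\big(\nabla\psi(x),\, \vphi(\nabla\psi(x))\big). \]
Letting $z = \nabla\psi(x)$ range over $\nabla\psi(L) = \RR^{n}$ gives $M^{*} = I^{+}(\graph_{\RR^{n}}(\vphi))$.

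Now I would invoke Proposition \ref{lem_1035}. Since $\vphi > 0$ we have $\epigraph(\vphi) \subseteq \cH^{+}$, its relative interior in $\cH^{+}$ is $\{(x,t) : t > \vphi(x)\}$, and its relative boundary in $\cH^{+}$ is exactly $\graph_{\RR^{n}}(\vphi)$. Proposition \ref{lem_1035} gives $I^{+}(\epigraph(\vphi)) = \epigraph(\psi)^{\circ}\cap\cH^{-} = \tilde{K}\cap\cH^{-}$, and since $I^{+}:\cH^{+}\to\cH^{-}$ is a homeomorphism it carries relative boundaries to relative boundaries; combined with the elementary identity $\partial(A\cap U)\cap U = (\partial A)\cap U$, valid for any set $A$ and open $U$, this yields $M^{*} = \partial\tilde{K}\cap\cH^{-}$, the first half of (\ref{eq_1347}). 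For the second half, testing the defining inequality of $\tilde{K} = \epigraph(\psi)^{\circ}$ against the points $(0,s)\in\epigraph(\psi)$ as $s\to+\infty$ (here $\psi(0)<0$) shows $\tilde{K}\subseteq\overline{\cH^{-}}$, so $\tilde{K}\setminus\cH^{-}$ lies in the supporting hyperplane $\{t=0\}$ of $\tilde{K}$ and hence $\tilde{K}\setminus\cH^{-} = (\partial\tilde{K})\setminus\cH^{-}$. By the last assertion of Proposition \ref{lem_1035}, $\tilde{K}\setminus\cH^{-} = \{(x,0) : x\in\dom(\psi)^{\circ}\}$, and since a polar body depends only on the closed convex hull, $\dom(\psi)^{\circ} = \overline{\dom(\psi)}^{\circ} = \overline{L}^{\circ} = L^{\circ}$.

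Finally, for compactness and dimension: continuity of $\psi$ near $0$ with $\psi(0)<0$ shows $\epigraph(\psi)$ contains a Euclidean ball centred at the origin of $\RR^{n+1}$, so $\tilde{K} = \epigraph(\psi)^{\circ}$ is bounded; being a polar body it is closed, hence compact. And $I^{+}$ maps the nonempty open set $\{(x,t) : t > \vphi(x)\}\subseteq\cH^{+}$ onto a nonempty open subset of $\RR^{n+1}$ contained in $\tilde{K}\cap\cH^{-}\subseteq\tilde{K}$, so $\tilde{K}$ has nonempty interior and $\dim(\tilde{K}) = n+1$. The only place needing genuine care is the bookkeeping with obverses, polar bodies and relative-versus-absolute boundaries — in particular the identification $M^{*} = I^{+}(\graph_{\RR^{n}}(\vphi))$ and the passage from the relative boundary of $\tilde{K}\cap\cH^{-}$ in $\cH^{-}$ to $\partial\tilde{K}\cap\cH^{-}$; there is no analytic difficulty, since the substantive work is already packaged in Proposition \ref{lem_1035} and in standard facts about the Legendre transform of a function with bounded domain and smooth, strongly-convex data.
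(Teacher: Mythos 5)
Your proof is correct and follows essentially the same route as the paper's: you compute the polarity map on $\graph_L(\psi)$ explicitly, identify $M^* = I^+(\graph_{\RR^n}(\vphi))$, and then read off both halves of (\ref{eq_1347}) from Proposition \ref{lem_1035} together with the identification $B = \tilde K \cap \cH^- = I^+(\epigraph(\vphi))$. The only cosmetic difference is that you derive $\nu_y$ from first principles and use the abstract identity $\partial(A\cap U)\cap U = (\partial A)\cap U$ for open $U$, whereas the paper writes down the $\nu_y$ formula directly and gets $\partial\tilde K\cap\cH^-$ by applying $I^+$ to $\partial A\cap\cH^+ = \graph_{\RR^n}(\vphi)$; both routes are valid and the substance is identical.
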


\begin{proof}
Define $\vphi = \psi^*$. Since $\nabla \psi(L) = \RR^n$,
necessarily $\dom(\vphi) = \RR^n$ by \cite[Corollary 13.3.3]{roc}.
 Since $\psi(0) < 0$, the function  $\vphi: \RR^n \rightarrow \RR$ is  positive and convex.
  Denote  $A = \epigraph(\vphi) \subseteq \cH^+$ and $B = \tilde{K} \cap \cH^-$. By Proposition \ref{lem_1035},
\begin{equation} B = \tilde{K} \cap \cH^- = \epigraph(\psi)^{\circ} \cap \cH^- = I^+(\epigraph(\vphi)) = I^+(A). \label{eq_432} \end{equation}
Since $\vphi: \RR^n \rightarrow \RR$ is convex and positive, we
may assert that  $\partial A \cap \cH^+ = \partial A = \graph_{\RR^n}(\vphi)$. Consequently
\begin{equation} \partial \tilde{K} \cap \cH^- = \partial B \cap \cH^- = I^+(\partial A \cap \cH^+) = I^+(\graph_{\RR^n}(\vphi)).
\label{eq_433} \end{equation}
Since $\psi$ is smooth in $L$, the identity $\psi(x) + \vphi(\nabla \psi(x)) = \langle x, \nabla \psi(x) \rangle$ holds for all $x \in L$.
The fact that $\nabla \psi(L) = \RR^n$ thus implies
\begin{equation}
\graph_{\RR^n}(\vphi) = \left \{ \left(\nabla \psi(x), \langle x, \nabla \psi(x) \rangle - \psi(x)  \right) \in \RR^n \times \RR \, ; \, x \in L \right \}.
\label{eq_435} \end{equation}
Note that $\langle x, \nabla \psi(x) \rangle - \psi(x) = \vphi(\nabla \psi(x)) > 0$ for all $x \in L$,
and hence $\nu_y$ is indeed well-defined.
It follows from Definition \ref{def_1502} that for $x \in L$ and $y = (x, \psi(x)) \in \graph_L(\psi)$,
\begin{equation} \nu_y =  \frac{\left(\nabla \psi(x), -1 \right)}{\langle x, \nabla \psi(x) \rangle - \psi(x)} = I^+ \left \{ \, \left(\nabla \psi(x), \langle x, \nabla \psi(x) \rangle - \psi(x)  \right) \, \right\}. \label{eq_439}
\end{equation}
Since $M = \graph_L(\psi)$ and $M^* = \nu(M)$, by (\ref{eq_433}), (\ref{eq_435}) and (\ref{eq_439}),
\begin{equation}  M^* = \nu(\graph_L(\psi)) = I^+(\graph_{\RR^n}(\vphi)) = \partial \tilde{K} \cap \cH^-. \label{eq_917A}
\end{equation}
Proposition \ref{lem_1035} shows
that $\tilde{K} = \epigraph(\psi)^{\circ} \subseteq \overline{\cH^-}$. In fact,
according to Proposition \ref{lem_1035},
\begin{equation} (\partial \tilde{K}) \setminus \cH^- =  \tilde{K} \setminus \cH^- = \{ (x, 0)  \, ; \, x \in \dom(\psi)^{\circ} \} = \{ (x, 0)  \, ; \, x \in L^{\circ} \}. \label{eq_1031}
\end{equation}
Now (\ref{eq_1347}) follows from (\ref{eq_917A}) and (\ref{eq_1031}).
It follows from (\ref{eq_1347}) that $\dim(\tilde{K}) = n+1$, since the convex set $\tilde{K}$ affinely-spans the hyperplane $\partial \cH^-$ while it also contains
points outside this hyperplane. Moreover, since $0 \in L$ and $\psi(0) < 0$, the convex set
$\epigraph(\psi)$ contains a neighborhood of the origin in $\RR^{n+1}$. Therefore the closed set $\tilde{K} = \epigraph(\psi)^{\circ}$ is bounded,
and hence it is compact.
\end{proof}

Recall from Proposition \ref{prop_1348} that $N_y$ is the Euclidean unit normal to $M$ at the point $y$ that is pointing to the concave
side of $M$. Recall also that we denote $\rho_y = \langle N_y, y \rangle$. It follows from Definition \ref{def_1502} that
if $\rho_y \neq 0$ for all $y \in M$ then the polarity map is well-defined, and
\begin{equation} \nu_y = \frac{N_y}{\rho_y} \qquad \qquad \text{for all} \ y \in M. \label{eq_1518}
\end{equation}
The map $N: M \rightarrow S^{n}$ is the Gauss map associated with $M$, and we see that the polarity map
is proportional to the Gauss map.
We define the {\it cone measure} on a smooth hypersurface $M \subseteq \RR^{n+1}$ to be the measure $\mu_M$ supported
on $M$ whose density with respect to the surface area measure on $M$ is the function $y \mapsto |\rho_y|/(n+1)$.
The reason for the term ``cone measure'' is that for any Borel subset $S \subseteq M$ that does not contain
two distinct points on the same ray from the origin,
$$ \mu_M(S) = \Vol_{n+1} \left( \left \{ t x \, ; \, 0 \leq t \leq 1, \, x \in S \right \} \right ). $$

\begin{proposition}
Let $M  \subseteq \RR^{n+1}$ be a smooth, connected, locally strongly-convex hypersurface. Then $M$ is affinely-spherical with center at the origin
if and only if the following holds: The polarity map $\nu: M \rightarrow M^*$ is well-defined, and it pushes forward the cone measure $\mu_M$ to a measure proportional
to the cone measure $\mu_{M^*}$. \label{prop_1015}
\end{proposition}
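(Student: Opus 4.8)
The plan is to use Proposition~\ref{prop_1348} to turn both sides of the claimed equivalence into a single pointwise identity, and then to establish that identity by a direct computation of the Radon--Nikodym density of $\nu_*\mu_M$ with respect to $\mu_{M^*}$.

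First I would dispose of the degenerate situation. If $M$ is affinely-spherical with center at the origin, then by Proposition~\ref{prop_1348} the quantity $\rho_y^{n+2}/K_y$ is a nonzero constant, so $\rho_y\neq 0$ for all $y\in M$; by~(\ref{eq_1518}) the polarity map $\nu\colon M\to M^*$ is then well-defined, and in particular $0\notin M$. Conversely, if the right-hand condition holds then $\nu$ is well-defined, which again forces $\rho_y\neq 0$ everywhere. If $\rho_y=0$ for some $y$, both sides of the equivalence fail (the left by Proposition~\ref{prop_1348}, the right because $\nu$ is not well-defined), so there is nothing to prove. Hence I may assume $\rho_y\neq 0$ throughout; then $\nu\colon M\to M^*$ is a diffeomorphism, $(M^*)^*=M$, and $\rho$ has a constant sign on the connected set $M$. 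Since $\mu_M$ and $\mu_{M^*}$ have smooth positive densities with respect to the surface-area measures of $M$ and $M^*$, the right-hand condition is equivalent to the density $g\colon M^*\to(0,\infty)$ of $\nu_*\mu_M$ with respect to $\mu_{M^*}$ being constant, and the goal becomes to show $g(\nu_y)=|\rho_y|^{n+2}/K_y$.

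The one genuinely new ingredient is the identity $T_{\nu_y}M^*=y^\perp$ for every $y\in M$. To prove it, take any smooth curve $\gamma$ in $M$ with $\gamma(0)=y$ and differentiate the defining relation $\langle\nu_{\gamma(t)},\gamma(t)\rangle\equiv 1$ at $t=0$: the term $\langle\nu_y,\gamma'(0)\rangle$ vanishes because $\gamma'(0)\in T_yM$ while $\nu_y\perp T_yM$, so $\langle(\nu\circ\gamma)'(0),y\rangle=0$; as $\gamma$ varies, $(\nu\circ\gamma)'(0)$ spans $T_{\nu_y}M^*$, whence $T_{\nu_y}M^*\subseteq y^\perp$, and equality holds by dimension count since $y\neq 0$. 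Consequently $N^*_{\nu_y}=\pm\,y/|y|$, so $|\rho^*_{\nu_y}|=|\langle N^*_{\nu_y},\nu_y\rangle|=1/|y|$ using $\langle\nu_y,y\rangle=1$; applying the same to $(M^*)^*=M$ gives $N_y=\pm\,\nu_y/|\nu_y|$ and $|\rho_y|=1/|\nu_y|$. (Here $N^*,\rho^*,K^*$ denote the data of $M^*$.)

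Finally I would assemble the Jacobians, taken throughout as ratios of $n$-dimensional surface measures. Two standard facts: the Gauss map of a locally strongly-convex hypersurface has Jacobian equal to its Gauss curvature, and the radial map $y\mapsto y/|y|$ of such a hypersurface not through the origin has Jacobian $|\rho_y|/|y|^{n+1}$ (its differential on the tangent space is $|y|^{-1}$ times orthogonal projection onto $y^\perp$, whose determinant there is $|\langle N_y,y/|y|\rangle|=|\rho_y|/|y|$). By the previous paragraph $N^*\circ\nu$ is, up to a locally constant sign, the radial map of $M$, so comparing Jacobians gives $K^*_{\nu_y}\,|\mathrm{Jac}\,\nu(y)|=|\rho_y|/|y|^{n+1}$; applying the same identity to the polar diffeomorphism $\nu^{-1}=\nu_{M^*}$, and using $|\mathrm{Jac}\,\nu^{-1}(\nu_y)|=1/|\mathrm{Jac}\,\nu(y)|$, gives $K_y/|\mathrm{Jac}\,\nu(y)|=|\rho^*_{\nu_y}|/|\nu_y|^{n+1}$; multiplying the two yields $K_yK^*_{\nu_y}=\bigl(|y|\,|\nu_y|\bigr)^{-(n+2)}$. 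Writing $z=\nu_y$, the change-of-variables formula gives
\[
g(z)=\frac{|\rho_y|/(n+1)}{|\mathrm{Jac}\,\nu(y)|\cdot|\rho^*_z|/(n+1)}=\frac{|\rho_y|}{|\mathrm{Jac}\,\nu(y)|\,|\rho^*_z|},
\]
and substituting $|\mathrm{Jac}\,\nu(y)|=|\rho_y|/(|y|^{n+1}K^*_z)$, $|\rho^*_z|=1/|y|$, and then $K^*_z=1/(K_y|y|^{n+2}|z|^{n+2})$ together with $|z|=|\nu_y|=1/|\rho_y|$ collapses this to $g(z)=|\rho_y|^{n+2}/K_y$. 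Therefore $\nu_*\mu_M$ is proportional to $\mu_{M^*}$ if and only if $y\mapsto|\rho_y|^{n+2}/K_y$ is constant on $M$; since $\rho$ has constant sign, this holds if and only if $\rho_y^{n+2}/K_y$ equals a (necessarily nonzero) constant, which by Proposition~\ref{prop_1348} is exactly the condition that $M$ be affinely-spherical with center at the origin. I expect the only real friction to be tracking the orientation signs and the three Jacobian factors; the identification $T_{\nu_y}M^*=y^\perp$ is the conceptual core.
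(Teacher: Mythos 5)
Your argument is correct and arrives at the same structural reduction as the paper, namely translating the push-forward condition into the constancy of $\rho_y^{n+2}/K_y$ and then invoking Proposition~\ref{prop_1348}; the genuine difference lies in how you compute the Jacobian of the polarity map. The paper differentiates the formula $\nu = N/\rho$ directly, obtaining $D_X\nu = S(X)/\rho - (D_X\rho/\rho^2)N$ in terms of the shape operator $S$, and then reads off $K_y = \det(S_y) = |\det(D\nu)_y|/(|y||\nu_y|^{n+1})$ by projecting $y^\perp$ onto $\nu_y^\perp$. You instead avoid the shape operator entirely: you first establish $T_{\nu_y}M^* = y^\perp$ by differentiating $\langle\nu_{\gamma},\gamma\rangle\equiv 1$ (the paper cites this fact without proof), deduce that $N^*\circ\nu$ is, up to sign, the radial map of $M$, and then assemble $|\mathrm{Jac}\,\nu|$ from two applications of the chain rule — one for $M$ and one for $M^*$ — using the standard Jacobians of the Gauss map and the radial map. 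Your route is slightly longer (two chain-rule identities plus their product, rather than one application of the shape-operator formula) but it is more elementary and makes the duality $K_y K^*_{\nu_y} = (|y||\nu_y|)^{-(n+2)}$ explicit, which the paper leaves implicit. Your handling of the degenerate case $\rho_y = 0$ and of the sign of $\rho$ on the connected manifold $M$ is also correct and fills in a detail the paper treats briefly.
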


\begin{proof} If $M$ is affinely-spherical with center at the origin then the polarity map of $M$ is well-defined,
since $\rho_y \neq 0$ for all $y \in M$ according to Proposition \ref{prop_1348}. For $y \in M$ let $S_y: T_y M\rightarrow T_y M$ be the shape operator
associated with the Euclidean unit normal $N$. Then $\det(S_y)$ is the Gauss curvature $K_y > 0$.
For any vector field $X$ tangent to $M$ we have
\begin{equation} D_X \nu = D_X \left( N / \rho \right) = \frac{S(X)}{\rho} - \frac{D_X \rho}{\rho^2} N, \label{eq_1207}
\end{equation}
where $D_X \nu \in \RR^{n+1}$ is the derivative of $\nu$ in the direction of $X$.
Write $D \nu: T M \rightarrow T M^{*}$ for the differential of the smooth polarity map $\nu$.
Then for any $y \in M$, the map $(D \nu)_y$ is a linear map from the tangent space $T_y \cM = \nu_y^{\perp}$ to the tangent space $T_{\nu_y} M^{*} = y^{\perp}$.
Here, $y^{\perp}$ is the hyperplane orthogonal to $y$ in $\RR^{n+1}$.
From (\ref{eq_1207}), for any $y \in M$ and $u \in T_y M$,
\begin{equation} S_y(u) = \rho_y \cdot Proj_{\nu_y^{\perp}} \left( (D \nu)_y(u) \right), \label{eq_1213} \end{equation}
where $Proj_{\nu_y^{\perp}}$ is the orthogonal projection operator onto $\nu_y^{\perp}$ in $\RR^{n+1}$.
The operator $Proj_{\nu_y^{\perp}}: y^{\perp} \rightarrow \nu_y^{\perp}$ distorts $n$-dimensional
volumes by a factor of $|\langle y, \nu_y \rangle| / (|y| |\nu_y|)$.
The linear map $(D \nu)_y: \nu_y^{\perp} \rightarrow y^{\perp}$ distorts volumes by a
factor of $|\det (D \nu)_y|$. Hence, by (\ref{eq_1213}),
for any $y \in M$,
\begin{equation} K_y = \det(S_y) =
|\rho_y|^n \cdot \frac{|\langle y, \nu_y \rangle|}{|y| |\nu_y|} \cdot |\det (D \nu)_y|  =
 \frac{|\det (D \nu)_y|}{|y| |\nu_y|^{n+1}}, \label{eq_1218}
\end{equation}
where we used (\ref{eq_1518}) in the last passage. In fact, according to (\ref{eq_1518}), the cone measure $\mu_M$ has density $y \mapsto 1 / ( (n+1) |\nu_y|)$ with respect to the surface area measure on $M$. Denote by $\theta$ the measure on $M$ whose density with respect to the surface area measure is $K_y |\nu_y|^{n+1} / (n+1)$.

\medskip Recalling that the polarity map of $M^*$ is inverse to that of $M$, we deduce from (\ref{eq_1218})
that $\nu$ pushes forward $\theta$ to the cone measure $\mu_{M^*}$. Consequently, $\nu$ pushes forward $\mu_M$ to a measure proportional
to $\mu_{M^*}$ if and only if $\theta$ is proportional to $\mu_M$, i.e., if and only if there exists $C > 0$ such that
\begin{equation} K_y |\nu_y|^{n+1} / (n+1) = C / ((n+1) |\nu_y|) \qquad \qquad \text{for all} \ y \in M.
\label{eq_1014} \end{equation}
Recall that $1 / |\nu_y| = |\rho_y|$, and that $\nu$ and $\rho$ are continuous
in the connected manifold $M$. By Proposition \ref{prop_1348},
the hypersurface $M$ is affinely-spherical with center at the origin if and only if there exists $C > 0$
such that (\ref{eq_1014}) holds true. This completes the proof.
\end{proof}

Since the polarity map of $M^*$ is the inverse to the polarity map of $M$,
Proposition \ref{prop_1015} has the following well-known corollary:

\begin{corollary} Let $M \subseteq \RR^{n+1}$ be an affinely-spherical hypersurface with center at the origin.
Then the polar hypersurface $M^{*}$ is well-defined, and it is again affinely-spherical with center at the origin. \label{dual}
\end{corollary}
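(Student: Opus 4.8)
The plan is to deduce the statement directly from Proposition \ref{prop_1015}, combined with the fact, recalled just before Lemma \ref{lem_1018}, that the polarity operation is an involution on smooth, connected, locally strongly-convex hypersurfaces $M$ satisfying $y \not\in T_y M$ for all $y$.

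First I would check that $M^{*}$ is well-defined. Since $M$ is affinely-spherical with center at the origin, Proposition \ref{prop_1348} yields $\rho_y \neq 0$ for every $y \in M$, hence $y \not\in T_y M$, so the polarity map $\nu \colon M \to M^{*}$ makes sense in the sense of Definition \ref{def_1502}. By the standard facts quoted in the text, $M^{*}$ is again a smooth, connected, locally strongly-convex hypersurface with $y \not\in T_y M^{*}$ for all $y \in M^{*}$, the polarity map $\nu^{*}$ of $M^{*}$ is the inverse diffeomorphism of $\nu$, and $(M^{*})^{*} = M$.

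Next I would apply the forward implication of Proposition \ref{prop_1015} to $M$ itself: $\nu$ pushes the cone measure $\mu_M$ to $c \cdot \mu_{M^{*}}$ for some constant $c$, which is positive since $\nu$ is a diffeomorphism and both cone measures are positive. Because $\nu^{*} = \nu^{-1}$ as maps, pushing forward by $\nu^{*}$ inverts this relation: $\nu^{*}$ pushes $\mu_{M^{*}}$ to $c^{-1}\mu_M = c^{-1}\mu_{(M^{*})^{*}}$, a measure proportional to the cone measure of $(M^{*})^{*}$. Thus the polarity map of $M^{*}$ is well-defined and transports $\mu_{M^{*}}$ to a measure proportional to the cone measure of its polar.

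Finally I would invoke the reverse implication of Proposition \ref{prop_1015}, now for the hypersurface $M^{*}$: since $M^{*}$ is a smooth, connected, locally strongly-convex hypersurface whose polarity map is well-defined and carries $\mu_{M^{*}}$ to a measure proportional to $\mu_{(M^{*})^{*}}$, it follows that $M^{*}$ is affinely-spherical with center at the origin, as claimed. I do not anticipate a real obstacle here; the only points requiring a line of justification are that ``pushing a positive measure forward to a proportional measure'' is symmetric under passing to the inverse diffeomorphism — which is immediate — and that $M^{*}$ indeed satisfies the hypotheses needed to apply Proposition \ref{prop_1015} a second time, which is exactly the involutivity statement recalled above.
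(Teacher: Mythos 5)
Your proof is correct and is essentially the same as the paper's, which disposes of the corollary in a single sentence preceding its statement (``Since the polarity map of $M^*$ is the inverse to the polarity map of $M$, Proposition \ref{prop_1015} has the following well-known corollary''). You have simply spelled out the same argument in full: well-definedness via Proposition \ref{prop_1348}, the involutivity of the polarity construction as recalled before Lemma \ref{lem_1018}, the forward direction of Proposition \ref{prop_1015} applied to $M$, and the reverse direction applied to $M^*$.
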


\begin{theorem} Let $L \subseteq \RR^n$ be an open, bounded, convex set containing the origin. Then the following are equivalent:
\begin{enumerate}
\item[(i)] The barycenter of $L$ lies at the origin.
\item[(ii)] There exists a proper, convex function $\psi: \RR^n \rightarrow \RR \cup \{ + \infty \}$ with $\overline{\dom(\psi)} = \overline{L}$
such that $\graph_L(\psi)$ is affinely-spherical with center at the origin, and such that $\psi$ is smooth and strongly-convex in $L$ with $\nabla \psi(L) = \RR^n$ and $\psi(0) < 0$.
\end{enumerate}
Moreover, assuming (i) or (ii), the function $\psi$ from (ii) is uniquely determined up to a multiplication
by a positive scalar $\lambda > 0$ and an addition of a linear function $\ell(x) = \langle x, v \rangle$, for some $v \in \RR^n$.
\label{thm4}
\end{theorem}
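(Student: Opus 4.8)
The plan is to read Theorem \ref{thm4} as a geometric restatement of Theorem \ref{thm3}, with the Legendre transform serving as the dictionary. The crucial link is Proposition \ref{prop_1147_}(ii): for $\vphi = \psi^*$ it identifies the property ``$\graph_L(\psi)$ is affinely-spherical with center at the origin'' with the Monge--Amp\`ere identity $\vphi^{n+2}\det\nabla^2\vphi \equiv C$ on $\nabla\psi(L)$ for some $C \neq 0$; and when $\nabla\psi(L) = \RR^n$ and $\vphi > 0$ this is precisely equation (\ref{eq_1502}), the constant being automatically positive since $\vphi > 0$ and $\det\nabla^2\vphi > 0$. Thus, after conjugation, Theorem \ref{thm4} is Theorem \ref{thm3}, provided one checks that the correspondence $\psi \leftrightarrow \vphi = \psi^*$ matches the two lists of hypotheses.

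So I would first set up this correspondence carefully. If $\psi$ is a proper convex function with $\overline{\dom(\psi)} = \overline{L}$ that is smooth and strongly convex on $L$, with $\nabla\psi(L) = \RR^n$ and $\psi(0) < 0$, then: $\dom(\psi)$ is bounded, so by Rockafellar \cite[Corollary 13.3.3]{roc} the conjugate $\vphi = \psi^*$ is finite on $\RR^n$; by \cite[Theorem 26.5]{roc} the map $\nabla\psi\colon L \to \RR^n$ is a diffeomorphism whose inverse is $\nabla\vphi\colon\RR^n \to L$, so $\vphi$ is smooth and strongly convex on $\RR^n$ with $\nabla\vphi(\RR^n) = L = \mathrm{int}\,\dom(\psi)$; and $\inf_{\RR^n}\vphi = -\psi(0) > 0$, so $\vphi$ is positive. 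Conversely, if $\vphi\colon\RR^n \to \RR$ is smooth, positive and convex with $\nabla\vphi(\RR^n) = L$ and $\vphi^{-(n+2)}$ integrable, then $\psi = \vphi^*$ satisfies $\mathrm{int}\,\dom(\psi) = L$ (hence $\overline{\dom(\psi)} = \overline{L}$) by \cite[Theorem 26.5]{roc}, has bounded domain since Lemma \ref{lem_457} gives $\vphi(x) \geq \alpha + \beta|x|$ with $\beta > 0$, is smooth and strongly convex on $L$ with $\nabla\psi(L) = \RR^n$, and has $\psi(0) = -\inf\vphi < 0$.

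With the dictionary in place the equivalence is immediate. For (i)$\Rightarrow$(ii): assuming the barycenter of $L$ lies at the origin, Theorem \ref{thm3} produces a smooth, positive, convex $\vphi$ solving (\ref{eq_1502}); the change-of-variables formula applied to $\det\nabla^2\vphi = C/\vphi^{n+2}$ and the diffeomorphism $\nabla\vphi\colon\RR^n\to L$ gives $C\int_{\RR^n}\vphi^{-(n+2)} = \Vol_n(L) < \infty$, so $\vphi^{-(n+2)}$ is integrable; then $\psi = \vphi^*$ has all the properties required in (ii), and since $\vphi^{n+2}\det\nabla^2\vphi \equiv C$ on $\nabla\psi(L) = \RR^n$, Proposition \ref{prop_1147_}(ii) makes $\graph_L(\psi)$ affinely-spherical with center at the origin. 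For (ii)$\Rightarrow$(i): given $\psi$ as in (ii), put $\vphi = \psi^*$; by the dictionary $\vphi$ is smooth, positive and strongly convex on $\RR^n$ with $\nabla\vphi(\RR^n) = L$, and Proposition \ref{prop_1147_}(ii) yields $\vphi^{n+2}\det\nabla^2\vphi \equiv C$ on $\RR^n$ with $C \neq 0$, hence $C > 0$, so $\vphi$ solves (\ref{eq_1502}); Theorem \ref{thm3} then forces the barycenter of $L$ to lie at the origin.

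For the moreover-clause, let $\psi$ and $\psi'$ both satisfy (ii) for the same $L$; then $\vphi = \psi^*$ and $\vphi' = (\psi')^*$ are smooth, positive, convex solutions of (\ref{eq_1502}) for this $L$, so by the uniqueness part of Theorem \ref{thm3} — which, since the $q$-moment measure is unchanged by dilation, identifies all solutions for a fixed $L$ regardless of the value of the constant — we have $\vphi'(x) = \lambda\vphi((x-w)/\lambda)$ for some $\lambda > 0$ and $w \in \RR^n$. Taking Legendre transforms and substituting $x = w + \lambda u$ gives $\psi'(y) = (\vphi')^*(y) = \langle w, y\rangle + \lambda\vphi^*(y) = \lambda\psi(y) + \langle y, w\rangle$, which is exactly multiplication of $\psi$ by the positive scalar $\lambda$ together with the addition of the linear function $\ell(y) = \langle y, w\rangle$. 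The proof presents no serious obstacle: the existence and uniqueness of the Monge--Amp\`ere solution are already packaged in Theorem \ref{thm3}, and the only points requiring a moment's care are the upgrade of $C$ from ``nonzero'' to ``positive'' and the matching of the domain-closure and gradient-range conditions under conjugation, both routine consequences of Rockafellar \cite[Theorem 26.5 and Corollary 13.3.3]{roc}.
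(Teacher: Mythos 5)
Your proof is correct and follows essentially the same route as the paper: both directions are obtained by conjugating Theorem \ref{thm3} via the Legendre transform dictionary (Rockafellar's Theorem 26.5 and Corollary 13.3.3), using Proposition \ref{prop_1147_}(ii) to identify ``$\graph_L(\psi)$ affinely-spherical at the origin'' with equation (\ref{eq_1502}) for $\vphi=\psi^*$, and the uniqueness clause is likewise pulled across from the translations-and-dilations statement in Theorem \ref{thm3}. The only extra remarks you add (the explicit positivity of $C$ in the (ii)$\Rightarrow$(i) direction, and the change-of-variables check of integrability of $\vphi^{-(n+2)}$) are minor and correct, and do not change the argument.
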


\begin{proof} Assume (i). According to Theorem \ref{thm3}, there exists a smooth, positive, convex function $\vphi: \RR^n \rightarrow \RR$ with
$\nabla \vphi(\RR^n) = L$ such that
\begin{equation}
\det \nabla^2 \vphi = \frac{C}{\vphi^{n+2}} \qquad  \text{in} \ \RR^n, \label{eq_927}
\end{equation}
for some constant $C > 0$. Denote $\psi = \vphi^*$. From  \cite[Theorem 26.5]{roc} we know that $\overline{\dom(\psi)} = \overline{L}$
and that $\psi$ is smooth and strongly convex in $L$ with $\nabla \psi(L) = \RR^n$.
According to Proposition \ref{prop_1147_}, equation (\ref{eq_927}) implies that $\graph_L(\psi)$ is affinely-spherical with center at the origin.
The infimum of $\vphi$ is attained and is positive because $0 \in L$. Hence $\psi(0) < 0$, and we have verified all conclusions in (ii).

\medskip Next, assume (ii) and let us prove (i). Denote $\vphi = \psi^*$. Since $\overline{L} = \overline{\dom(\psi)}$ is a bounded set, necessarily
$\dom(\vphi) = \RR^n$ by \cite[Corollary 13.3.3]{roc}. Since $\psi$ is smooth and strongly-convex in $L$
with $\nabla \psi(L) = \RR^n$ and $\psi(0) < 0$, necessarily $\vphi$ is a positive, smooth, strongly-convex function in $\RR^n$ with $\nabla \vphi(\RR^n) = L$.
Since $\graph_L(\psi)$
is affinely-spherical with center at the origin, Proposition \ref{prop_1147_} shows that (\ref{eq_927}) holds true.
Theorem \ref{thm3} now implies (i). Moreover,  Theorem \ref{thm3} states that $\vphi$ is uniquely determined up to translations and dilations, implying
that $\psi$ is determined up to the transformation described above.
\end{proof}

Let $K \subseteq \RR^n$ be an $n$-dimensional, non-empty, bounded, convex set.
The {\it Santal\'o point} of $K$ is the unique point $z(K) \in \RR^n$ such that
$$ \Vol_n( (K - z(K))^{\circ} ) = \inf_{z \in \RR^n} \Vol_n (K - z)^{\circ} $$
where $K - z = \{ x - z \, ; \, x \in K \}$.
The Santal\'o point of $K$ is well-defined and it belongs to the interior of $K$, see \cite[Section 7.4]{sch}.
The Santal\'o point of $K$ satisfies $z(K) = 0$ if and only if the barycenter of $K^{\circ}$ is well-defined and it lies at the origin.
The Santal\'o point is affinely-invariant: for any invertible, affine transformation $T: \RR^n \rightarrow \RR^n$ we know that
$z(T(K)) = T(z(K))$.
Hence the Santal\'o point is well-defined for any non-empty, bounded, convex set embedded in some finite-dimensional
real linear space.

\begin{proof}[Proof of the existence part of Theorem \ref{thm1}] By applying an affine transformation in $\RR^{n+1}$,
we may assume that the Santal\'o point of $K$ lies at the origin, and that
$$ K \subseteq \{ (x, 0) \, ; \, x \in \RR^n \}. $$
Write $K_1 \subseteq \RR^n$ for the interior of the set $\left \{ x \in \RR^n \, ; \, (x,0) \in K \right \}$.
Then $K_1 \subseteq \RR^n$ is an open, convex set whose Santal\'o point lies at the origin.
Hence $K_1^{\circ} \subseteq \RR^n$
is a compact, convex set containing zero in its interior such that the barycenter of $K_1^{\circ}$ lies at the origin. Write $L \subseteq \RR^n$ for the interior of $K_1^{\circ}$.
It follows from Theorem \ref{thm4} that
there exists a proper, convex function $\psi: \RR^n \rightarrow \RR \cup \{ + \infty \}$ with $\overline{\dom(\psi)} = \overline{L}$
such that $$ M := \graph_{L}(\psi) $$ is affinely-spherical with center at the origin. Moreover, $\nabla \psi(L) = \RR^n$ and $\psi(0) < 0$.
Denote
$$ \tilde{K} = \epigraph(\psi)^{\circ}. $$
According to Corollary \ref{dual}, the hypersurface $M^*$ is affinely-spherical with center at the origin.
Furthermore, Lemma \ref{lem_1018} shows that $\tilde{K} \subseteq \RR^{n+1}$ is an $(n+1)$-dimensional, compact convex set and
$$ M^* = (\partial \tilde{K}) \cap \cH^-  \qquad \text{while} \qquad (\partial \tilde{K}) \setminus \cH^- = L^{\circ} \times \{ 0 \} = K. $$
Consequently $M^* \susbeteq \cH^-$ does not intersect the hyperplane $\partial \cH^-$ that contains $K$, while $\partial \tilde{K} = M^* \cup K$.
According to Definition \ref{def_1251}, the hypersurface  $M^*$ is an affine hemisphere with anchor $K$, which is centered at the Santal\'o point of $K$.
\end{proof}

\begin{proposition}
 Let $L \subseteq \RR^n$ be a bounded, open, convex set containing the origin.
Let $M \subseteq \cH^-$ be an affine hemisphere with anchor $L^{\circ} \times \{ 0 \} \subseteq \RR^n \times \RR = \RR^{n+1}$ and center at the origin. Then $M^*$ is well-defined, and there exists a function $\psi$ as in Theorem \ref{thm4}(ii) such that $ M^* = \graph_L(\psi)$. \label{prop_1153}
\end{proposition}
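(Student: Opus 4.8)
The plan is to run the construction from the proof of the existence part of Theorem \ref{thm1} in reverse: starting from the affine hemisphere $M$, I manufacture a convex set $B \subseteq \cH^-$ to which Lemma \ref{prop_1446} applies, extract $\psi$, and then identify $M^*$ with $\graph_L(\psi)$ via Lemma \ref{lem_1018}.

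First I would unpack Definition \ref{def_1251}. Since $M \subseteq \cH^-$ is an affine hemisphere with anchor $L^{\circ} \times \{ 0 \}$, there is a compact, convex set $\tilde{K} \subseteq \RR^{n+1}$ with $\dim \tilde{K} = n+1$ and $\partial \tilde{K} = M \cup (L^{\circ} \times \{ 0 \})$, where $M$ is a smooth, connected, locally strongly-convex hypersurface disjoint from the hyperplane $\partial \cH^- = \{ (x,0) \, ; \, x \in \RR^n \}$ spanned by the anchor. Because $\tilde{K} = \conv(\partial \tilde{K}) = \conv( M \cup (L^{\circ} \times \{ 0 \}) ) \subseteq \overline{\cH^-}$, the interior of $\tilde{K}$ is contained in $\cH^-$, so $B := \tilde{K} \cap \cH^-$ is a relatively-closed, convex subset of $\cH^-$ with $\overline{B} = \tilde{K}$, hence with $\partial B = \partial \tilde{K}$. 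As $\tilde{K}$ is compact, $B$ is bounded from below; moreover $(\partial B) \cap \cH^- = M$ is the required smooth, connected, locally strongly-convex hypersurface and $(\partial B) \setminus \cH^- = \{ (x,0) \, ; \, x \in L^{\circ} \}$. Thus every hypothesis of Lemma \ref{prop_1446} is met.

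Applying Lemma \ref{prop_1446} produces a proper, convex function $\psi : \RR^n \rightarrow \RR \cup \{ +\infty \}$ with $\overline{\dom(\psi)} = \overline{L}$ that is smooth and strongly-convex in $L$, with $\nabla \psi(L) = \RR^n$, $\psi(0) < 0$, and $\tilde{K} = \overline{B} = \epigraph(\psi)^{\circ}$. Next I would feed this $\psi$ together with $L$ into Lemma \ref{lem_1018}, whose hypotheses now hold verbatim; writing $M_0 = \graph_L(\psi)$, it gives that $M_0^{*}$ is well-defined and that $(\partial \tilde{K}) \cap \cH^- = M_0^{*}$. But $(\partial \tilde{K}) \cap \cH^- = M$ by the previous paragraph, so $M = M_0^{*}$. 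Since the polarity map is involutive on hypersurfaces for which it is defined (Definition \ref{def_1502} and the discussion following it), passing to polar hypersurfaces yields $M^{*} = (M_0^{*})^{*} = M_0 = \graph_L(\psi)$; in particular $M^{*}$ is well-defined.

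Finally I would verify that $\psi$ is a function as in Theorem \ref{thm4}(ii). All of its properties except affine-sphericity were already recorded above. For the remaining one, observe that $M$, being an affine hemisphere with center at the origin, is affinely-spherical with center at the origin, so Corollary \ref{dual} shows that $M^{*} = \graph_L(\psi)$ is likewise affinely-spherical with center at the origin. Hence $\psi$ satisfies every requirement in Theorem \ref{thm4}(ii) and $M^{*} = \graph_L(\psi)$, as desired. The one genuinely delicate step is the first: checking that $B = \tilde{K} \cap \cH^-$ fulfills the hypotheses of Lemma \ref{prop_1446}, above all the identities $\overline{B} = \tilde{K}$ and $\partial B = \partial \tilde{K}$, which rest on the interior of $\tilde{K}$ lying in $\cH^-$. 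Once this is in place, the rest is a bookkeeping chain through the lemmas already established.
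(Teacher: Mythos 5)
Your proof is correct and mirrors the paper's argument step for step: form $B = \tilde K \cap \cH^-$, verify the hypotheses of Lemma \ref{prop_1446} and extract $\psi$, invoke Lemma \ref{lem_1018} to get $M_0^* = M$ for $M_0 = \graph_L(\psi)$, pass to the polar using involutivity, and finish with Corollary \ref{dual} to confirm that $\psi$ meets Theorem \ref{thm4}(ii). The only difference is that you spell out some elementary topological bookkeeping (e.g., that $\overline{B}=\tilde K$ and $\partial B = \partial \tilde K$ follow from $\mathrm{int}\,\tilde K \subseteq \cH^-$) which the paper leaves implicit.
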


\begin{proof} The hypersurface  $M \subseteq \cH^-$ is an affine hemisphere with anchor $K = L^{\circ} \times \{ 0 \}$ which is centered  at the origin.
Let $\tilde{K}$ be as in Definition \ref{def_1251}.
Denote $B = \tilde{K} \cap \cH^-$ which is a convex, relatively-closed subset of  $\cH^-$ with $\overline{B} = \tilde{K}$.
The convex set $B$ is bounded from below
in $\cH^-$ since $\tilde{K}$ is compact. Moreover, by Definition \ref{def_1251} the set
\begin{equation} M = (\partial \tilde{K}) \cap \cH^- = (\partial B) \cap \cH^- \label{eq_1447} \end{equation}
is a smooth, connected, locally strongly-convex hypersurface. Additionally, it follows from Definition \ref{def_1251} that
\begin{equation} (\partial B) \setminus \cH^- = (\partial \tilde{K}) \setminus \cH^- = K = L^{\circ} \times \{ 0  \}. \label{eq_917} \end{equation}
Thus the  relatively-closed, convex set $B \subseteq \cH^-$ satisfies all of the requirements of Lemma \ref{prop_1446}.
From the conclusion of Lemma \ref{prop_1446},
there exists a proper, convex function $\psi: \RR^n \rightarrow \RR \cup \{+ \infty \}$  such that
\begin{equation} \epigraph(\psi)^{\circ} = \overline{B} = \tilde{K} \label{eq_1147} \end{equation}
and such that $\psi(0) < 0, \overline{\dom(\psi)} = \overline{L}$ while
$\psi$ is smooth and strongly-convex in $L$ with $\nabla \psi(L) = \RR^n$.
Thanks to (\ref{eq_1447}) and (\ref{eq_1147}), Lemma \ref{lem_1018} shows that
$$ \graph_L(\psi) = M^*. $$
Since $M$ is affinely-spherical with center at the origin, Corollary \ref{dual} implies that $\graph_L(\psi)$ is also affinely-spherical with center at the origin.
Hence the function $\psi$ satisfies all of the conditions of Theorem \ref{thm4}(ii), and the proposition is proven.
\end{proof}

\begin{proof}[Proof of the uniqueness part of Theorem \ref{thm1}]
Suppose that $M$ is an affine hemisphere with anchor $K$,
and let $\tilde{K}$ be as in Definition \ref{def_1251}.
By applying an affine transformation in $\RR^{n+1}$,
we may assume that $M$ is affinely-spherical with center at the origin, and that
\begin{equation}  K \subseteq \{ (x, 0) \, ; \, x \in \RR^n \} \qquad \text{while} \qquad \tilde{K} \subseteq \overline{\cH^-}. \label{eq_1154} \end{equation}
Definition \ref{def_1251} implies that the origin belongs to the relative interior of the $n$-dimensional, compact, convex set $K$.
Hence there exists a bounded, open, convex set $L \subseteq \RR^n$  containing the origin such that $K = L^{\circ} \times \{ 0 \}$.
From (\ref{eq_1154}) and Definition \ref{def_1251} we conclude that $M = \partial \tilde{K} \cap \cH^- \subseteq \cH^-$.
Proposition \ref{prop_1153} shows that $M^* = \graph_L(\psi)$ for a certain convex function $\psi: \RR^n \rightarrow \RR \cup \{ + \infty \}$
satisfying the requirements of Theorem \ref{thm4}(ii).

\medskip Theorem \ref{thm4} now implies that the barycenter of $L$ lies at the origin, and hence the affine hemisphere $M$ is centered at the Santal\'o
point of $K$. According to Theorem \ref{thm4}, the function $\psi$ is uniquely determined by $L$, up to a multiplication by a positive scalar and an addition of a linear function.
It thus follows that the affine hemisphere $M = \graph_L(\psi)^*$ with anchor $L^{\circ} \times \{0\}$ is uniquely determined by $L$, up to
a linear transformation. Therefore $M$ is determined by $K$ up to an affine transformation, and the proof is complete.
\end{proof}

\begin{remark} {\rm Let $M$ be an affine hemisphere in $\RR^{n+1}$ with center at the origin and anchor $K \susbeteq \RR^n \times \{ 0 \}$.
Let $\tilde{K} \subseteq \RR^n \times [0, \infty)$ be the convex body from Definition \ref{def_1251},
so that $\partial \tilde{K} = M \cup K$. For $(x,t) \in \RR^n \times [0, \infty)$ set
$$ \| (x,t) \|_{\tilde{K}} = \inf \left \{ \lambda > 0 \, ; \, (x,t) / \lambda \in \tilde{K} \right \}, $$
the Minkowski functional of $\tilde{K}$. Denote also $F(x,t) = \| (x,t) \|_{\tilde{K}}^2/2$. Since the origin belongs
to the relative interior of $K$, the function $F$ is a finite, $2$-homogenous, convex
function in the half-space $(x,t) \in \RR^n \times [0, \infty)$. Note that the closure of the affine hemisphere $M$ is a level set of the function $F$.
It was noted by Bo Berndtsson that the function $F$ satisfies
\begin{equation}  \left \{ \begin{array}{rccl} \det \nabla^2 F(x,t) & = & C  & \text{for} \ (x,t) \in \RR^n \times (0, \infty) \\ F(x,0) & = & \| x \|_K^2/2  & \text{for} \ x \in \RR^n \end{array} \right. \label{eq_1313} \end{equation}
where $C > 0$ is a positive constant and $\| x \|_K = \inf \{ \lambda > 0 \, ; \, x / \lambda \in K \}$ is the Minkowski functional of $K$.
Thus $F$ solves the parabolic affine sphere equation $\det \nabla^2 F \equiv Const$ in a half-space, with boundary values that are $2$-homogenous and convex.
In order to prove the equation in (\ref{eq_1313}), we argue as follows: The map $\nabla F$ restricted to $M$ is precisely the polarity map
of the affine hemisphere $M$. Since $\nabla F$ is $1$-homogenous, for any measurable subset $A \subseteq M$ and $0 < \alpha < \beta$,
\begin{equation}  \{ \nabla F(t y) \, ; \, y \in A, \, \alpha < t < \beta \} = \left \{ t z \, ; \, z \in \nu(A), \, \alpha < t < \beta \right \} \label{eq_1334}
\end{equation}
where $\nu: M \rightarrow M^*$ is the polarity map associated with $M$. Proposition \ref{prop_1015} states that $\nu$ pushes forward the cone volume measure
on $M$ to a constant multiple of the cone volume measure on $M^*$. It thus follows from (\ref{eq_1334}) that $\nabla F$
pushes forward the Lebesgue measure on $\tilde{K}$ to a constant multiple of the Lebesgue measure on $\{ t y \, ; \, y \in M^*, t \in [0,1] \}$.
Therefore the Jacobian of the map $y \mapsto \nabla F(y)$ has a constant determinant, and (\ref{eq_1313}) is proven.
} \label{rem_1338} \end{remark}

{
}


\end{document}